\theoremstyle{plain}
\newtheorem{theorem}{Theorem}[section]
\newtheorem{corollary}[theorem]{Corollary}
\newtheorem{proposition}[theorem]{Proposition}
\newtheorem{lemma}[theorem]{Lemma}
\theoremstyle{definition}
\newtheorem{remark}[theorem]{Remark}
\newtheorem{definition}[theorem]{Definition}
\newtheorem{construction}[theorem]{Construction}
\newcommand{\abs}[1]{\left\vert{#1}\right\vert}
\newcommand{\bd}[1]{\mathbf{#1}}
\newcommand{\br}[1]{\langle#1\rangle}
\newcommand{\C}{\mathcal{C}}
\newcommand{\Cat}{\mathbf{Cat}}
\newcommand{\id}{\mathrm{id}}
\newcommand{\D}{\mathcal{D}}
\newcommand{\E}{\mathcal{E}}
\newcommand{\Fhat}{\hat{F}}
\newcommand{\kappabar}{\overline{\kappa}}
\newcommand{\lambdabar}{\overline{\lambda}}
\newcommand{\multprod}{\Gamma}
\newcommand{\Mult}{\mathbf{Mult}}
\newcommand{\Ob}{{\mathop{\textnormal{Ob}}}}
\newcommand{\Prod}{\prod\limits}
\newcommand{\Sym}{\mathbf{Sym}}
\newcommand{\xhat}{\hat{\bd x}}
\newcommand{\xtilde}{\tilde{\bd x}}
\begin{document}
\title{Multicategories from Symmetric Monoidal Categories}

\author{A.\ D.\ Elmendorf}
\address{Department of Mathematics\\
Purdue University Northwest\\
 Hammond, IN 46323}
 
\email{adelmend@pnw.edu}

\date{July 31, 2025}

\begin{abstract}
This paper considers the possible underlying multicategories for a symmetric monoidal category, and
shows that, up to canonical and coherent isomorphism, there really is only one.  As a result, there is a well-defined forgetful
functor from symmetric monoidal categories to multicategories, as long as all morphisms of symmetric monoidal categories
are at least lax symmetric monoidal.  
\end{abstract}

\maketitle

\tableofcontents

\section{Introduction}

The main aim of this paper is to give a rigorous treatment of the folk theorem that a symmetric monoidal category has a canonical
underlying multicategory.  The problem with this statement is that it is false: a general symmetric monoidal category has many
different underlying multicategories, and there is no truly natural choice of which one to use.  However, even if we use an absurdly
large collection of underlying multicategories, as we will do in this paper, they are all canonically and coherently isomorphic.  As a 
result, it doesn't matter which ones we use, or even if we use different ones for different symmetric monoidal categories: we still
obtain a functor from symmetric monoidal categories (with lax monoidal functors between them) to multicategories, given by
whatever underlying multicategory we choose for each symmetric monoidal category.  

This underlying multicategory functor has a ``weak'' left adjoint, whose construction is the same as the actual left adjoint to the
underlying multicategory functor from permutative categories and strict maps, as constructed in \cite{EM2}.  However, the ``counit''
of the adjunction is only natural with respect to strict maps, which is far too restrictive in general. The adjunction triangles do
however commute, so we do get an adjunction comonad on symmetric monoidal categories, which provides us with a strictification
construction different from that given in \cite{Isbell}.  

Previous work on this issue has concentrated on monoidal categories that need not be symmetric: see, for example, Hermida's work
in \cite{Hermida}, especially section 7, as well as section 3.3 of Leinster's \cite{Leinster}. In particular, Leinster discusses symmetric
multicategories in Section A.2, but doesn't relate them to symmetric monoidal categories, which he doesn't appear to discuss at all.

The author owes an enormous debt of gratitude to the anonymous referee, who scrutinized the paper with astonishing thoroughness
and offered perceptive and probing comments throughout the paper, to its great benefit.  All flaws that remain are, however, entirely
the responsibility of the author.  

It has been a pleasure to discuss some of the material in this paper with Anna Marie Bohmann, Cary Malkiewich, Mona Merling, 
and Inna Zakharevich.  They too are in no way responsible for any errors or omissions that may occur in the paper.

\section{Definitions and Results}

We begin by describing the enormous array of possible underlying multicategories for a symmetric monoidal category, and this
relies on the categorical operad $Y$ described in \cite{SMC1}
whose algebras are precisely symmetric monoidal categories.
The objects of the component category $Y(n)$ can be thought of
as a complete parenthesization of a list of length at least $n$, together with a way of permuting and inserting $n$ variables into
slots in the list.  The rest of the slots are to be filled with identity elements.  
There is a unique morphism from any object of $Y(n)$ to any other, so all diagrams in $Y(n)$ commute.
Then we will describe one underlying multicategory
for a symmetric monoidal category $\C$ for each sequence of functions
\[
\kappa_n:\Ob\C^n\to\Ob Y(n)
\]
 for $n\ge0$, with no further structure.  
 We give an example of such a sequence at the beginning of Section \ref{properties}.
We will write this underlying multicategory as $U_\kappa\C$.
The idea is that for each $n$-tuple $\bd x=(x_1,\dots,x_n)$ of objects of $\C$, the image $\kappa_n\bd x$
tells us how to combine the objects into a single object using the symmetric monoidal structure of $\C$.  Formally, we obtain
an object $\kappa_n(\bd x)(\bd x)$ for each $\bd x\in\C^n$; we will abbreviate this, and write
\[
\kappa_n(\bd x)(\bd x)=\kappabar\bd x.
\]
For notation, we will always write a morphism set in a multicategory with a semicolon separating the source list
from the target object.
Then the formal definition of the morphism sets in $U_\kappa\C$ is almost as follows:

\begin{definition}
Given an $n$-tuple $\bd x=(x_1,\dots,x_n)$ of objects of $\C$ and a single object $y$ of $\C$, we define
\[
U_\kappa\C(\bd x;y)=\C(\kappabar\bd x,y).
\]
\end{definition}

The reason this is almost, but not quite, the correct definition is described at the beginning of Section \ref{structure}.

Our first major theorem is then

\begin{theorem}\label{formsamulticat}
The morphism sets $U_\kappa\C(\bd x;y)$ form a multicategory, with objects the objects of $\C$.
\end{theorem}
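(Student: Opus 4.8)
The plan is to read the multicategory structure directly off the structure of $\C$ as a $Y$-algebra, exploiting at every turn the fact that $Y(n)$ has a unique morphism between any two of its objects. Recall that a (strict) $Y$-algebra structure on $\C$ consists of a functor $\theta_n\colon Y(n)\times\C^n\to\C$ for each $n\ge0$, strictly compatible with the operad composition $\multprod$ of $Y$, with the operad unit $1\in\Ob Y(1)$, and with the symmetric group actions; in this notation $\kappabar{\bd x}=\theta_n(\kappa_n(\bd x);\bd x)$. The single observation that drives everything is: since each $Y(n)$ has a unique morphism between any two objects --- so that all diagrams in $Y(n)$ commute --- any object $\theta_n(c;\bd x)$ of $\C$ arising from an $n$-tuple $\bd x$ is \emph{canonically} isomorphic to $\theta_n(c';\bd x)$ for every other $c'\in\Ob Y(n)$, via the image under the functor $\theta_n(-;\bd x)$ of the unique morphism $c\to c'$; and any diagram of $\C$ assembled from such isomorphisms commutes, because it already does in $Y(n)$. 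Since the $\kappa_n$ carry no structure, all of the arbitrary choices they make are reconciled for free in this way.

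With this understood, I would first take the identities: the algebra unit axiom gives $\theta_1(1;x)=x$, so the unique morphism $\kappa_1(x)\to1$ in $Y(1)$ yields a canonical isomorphism $\kappabar{(x)}\xrightarrow{\ \sim\ }x$, which I declare to be $\id_x\in U_\kappa\C(x;x)$. For composition, given $f\in U_\kappa\C(\bd y;z)=\C(\kappabar{\bd y},z)$ with $\bd y=(y_1,\dots,y_k)$ and $g_i\in U_\kappa\C(\bd x_i;y_i)=\C(\kappabar{\bd x_i},y_i)$ with $\bd x_i\in\C^{n_i}$, put $n=\sum_i n_i$ and $c=\multprod\bigl(\kappa_k(\bd y);\kappa_{n_1}(\bd x_1),\dots,\kappa_{n_k}(\bd x_k)\bigr)\in\Ob Y(n)$. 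Strict compatibility of $\theta$ with $\multprod$ gives the equality $\theta_n(c;\bd x_1,\dots,\bd x_k)=\theta_k\bigl(\kappa_k(\bd y);\kappabar{\bd x_1},\dots,\kappabar{\bd x_k}\bigr)$ of objects of $\C$, and I define $f\circ(g_1,\dots,g_k)\in U_\kappa\C(\bd x_1,\dots,\bd x_k;z)$ to be the composite
\[
\kappabar{(\bd x_1,\dots,\bd x_k)}\ \xrightarrow{\ \sim\ }\ \theta_n(c;\bd x_1,\dots,\bd x_k)\ =\ \theta_k\bigl(\kappa_k(\bd y);\kappabar{\bd x_1},\dots,\kappabar{\bd x_k}\bigr)\ \xrightarrow{\ \theta_k(\id;\,g_1,\dots,g_k)\ }\ \kappabar{\bd y}\ \xrightarrow{\ f\ }\ z,
\]
where the first map is the canonical isomorphism associated to the unique morphism $\kappa_n(\bd x_1,\dots,\bd x_k)\to c$ in $Y(n)$. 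The $\Sigma_n$-action is defined in the same spirit: for $\sigma\in\Sigma_n$ and $f\colon\kappabar{\bd x}\to y$, precompose $f$ with the canonical isomorphism $\kappabar{\bd x\sigma}\xrightarrow{\ \sim\ }\kappabar{\bd x}$ built from the unique morphism between $\kappa_n(\bd x\sigma)$ and $\kappa_n(\bd x)\sigma$ in $Y(n)$ and the equivariance axiom of the $Y$-algebra structure.

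It then remains to check the multicategory axioms --- the two unit laws, associativity of $\circ$, and the equivariance axioms relating $\circ$ to the symmetric-group actions. In each case the strategy is uniform: one rewrites the canonical isomorphism that appears (e.g.\ $\kappabar{\bd x}\to\theta_n(c;\bd x)$) as $\theta$ applied to an $\multprod$ of the corresponding unit- or structure-morphisms of the operad --- which is legitimate because any two morphisms with the same source and target in $Y(m)$ agree --- and then, using functoriality of the $\theta_m$'s together with the operad unit, associativity, and equivariance axioms of $Y$, both sides collapse to the same $\C$-morphism. I expect the main obstacle to be purely organizational: one must insert the bridging isomorphism between $\multprod(\kappa_k(\bd y);\kappa_{n_1}(\bd x_1),\dots)$ and $\kappa_n(\bd x_1,\dots,\bd x_k)$ in precisely the right place, track it through the nested compositions, and along the way replace the provisional formula $U_\kappa\C(\bd x;y)=\C(\kappabar{\bd x},y)$ by the corrected definition of Section \ref{structure}; once that is in order, the associativity check --- the longest of the verifications --- is mechanical, since every coherence question it raises is answered inside a category all of whose diagrams commute.
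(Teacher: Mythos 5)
Your construction is exactly the paper's: identities from the unique morphism $\kappa_1(x)\to 1$ in $Y(1)$, composition by passing through $\gamma(\kappa_k(\bd y);\br{\kappa_{n_i}(\bd x_i)})$ via the unique isomorphism in $Y(n)$ (the paper's $\phi$), and the $\Sigma_n$-action via the unique morphism $\kappa_n(\sigma^{-1}\bd x)\to\kappa_n(\bd x)\cdot\sigma$, with the disjointness fix noted; and your verification strategy (all diagrams in $Y(n)$ commute, plus functoriality of the action functors and the operad axioms) is precisely how the paper's Section \ref{properties} carries out the axiom checks. The only difference is that you outline the diagram chases rather than executing them, but the plan is the correct one.
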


The structure is given in Section \ref{structure}; the verification that the properties of a multicategory are
satisfied is somewhat lengthy, and 
deferred until Section \ref{properties}.

Our next major theorem shows that these underlying multicategories are all essentially the same.

\begin{theorem}\label{isomulticats}
Let $\lambda_n:\Ob\C^n\to\Ob Y(n)$ be another sequence of functions defining another underlying
multicategory $U_\lambda\C$.  Then there is a canonical isomorphism of multicategories
\[
U_\kappa\C\to U_\lambda\C.
\]
Further, given a third sequence $\rho_n:\Ob\C^n\to\Ob Y(n)$, the diagram of canonical isomorphisms
\[
\xymatrix{
U_\kappa\C
\ar[rr]
\ar[dr]
&&U_\lambda\C
\ar[dl]
\\&U_\rho\C
}
\]
commutes, so the isomorphisms are both canonical and coherent.
\end{theorem}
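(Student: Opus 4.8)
The plan is to read off the comparison directly from the fact, recalled in the excerpt, that each $Y(n)$ is codiscrete: between any two of its objects there is exactly one morphism. For a fixed $\bd x\in\Ob\C^n$ we therefore have a unique morphism $\kappa_n(\bd x)\to\lambda_n(\bd x)$ in $Y(n)$, and applying the action functor $Y(n)\times\C^n\to\C$ supplied by the $Y$-algebra structure of $\C$ (to this morphism and to the identity of $\bd x$) produces a morphism
\[
\theta_{\bd x}\colon\kappabar\bd x=\kappa_n(\bd x)(\bd x)\longrightarrow\lambda_n(\bd x)(\bd x)=\lambdabar\bd x
\]
in $\C$. The unique morphisms $\kappa_n(\bd x)\to\lambda_n(\bd x)$ and $\lambda_n(\bd x)\to\kappa_n(\bd x)$ compose, in either order, to the unique endomorphisms, which are the identities; since the action functor is a functor, $\theta_{\bd x}$ is an isomorphism, with inverse the image of the reverse morphism.

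First I would define the comparison $F\colon U_\kappa\C\to U_\lambda\C$ to be the identity on objects, and on morphism sets to send $f\in U_\kappa\C(\bd x;y)=\C(\kappabar\bd x,y)$ to $f\circ\theta_{\bd x}^{-1}\in\C(\lambdabar\bd x,y)=U_\lambda\C(\bd x;y)$. This is evidently a bijection on each morphism set, its inverse being precomposition with $\theta_{\bd x}$, so the substance of the theorem is that $F$ is a map of multicategories: it must carry the multicategorical composites and the identities of $U_\kappa\C$, as defined in Section~\ref{structure}, to those of $U_\lambda\C$. The identity $1_x\in U_\kappa\C(x;x)$ is handled by noting that it too is built from the structure maps and is carried to $1_x\in U_\lambda\C(x;x)$ by the same mechanism.

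The main obstacle is this compatibility with composition. The multicategory structure of Section~\ref{structure} is assembled entirely from the operad composition of $Y$ and the resulting action maps in $\C$ (together with the reindexing forced by the ``not quite'' correction to the naive definition of the morphism sets). So for a multicomposable family, the two ways of producing a composite in $U_\lambda\C$ — compose in $U_\kappa\C$ first and then transport along the appropriate $\theta$, versus transport each input along $\theta$ and then compose — differ by a single morphism of $\C$ obtained by applying the action functor to a diagram living in the categories $Y(m)$. Every such diagram commutes because each $Y(m)$ is codiscrete, and the action is functorial, so the two composites agree. I expect unwinding the explicit formulas for composition and for that reindexing to be the only laborious part; everything else is formal.

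Coherence is then nearly automatic. For $\kappa,\lambda,\rho$ and any $\bd x$, the composite in $Y(n)$ of the unique morphisms $\kappa_n(\bd x)\to\lambda_n(\bd x)$ and $\lambda_n(\bd x)\to\rho_n(\bd x)$ is the unique morphism $\kappa_n(\bd x)\to\rho_n(\bd x)$, so functoriality of the action gives $\theta^{\lambda\rho}_{\bd x}\circ\theta^{\kappa\lambda}_{\bd x}=\theta^{\kappa\rho}_{\bd x}$. Hence for $f\in U_\kappa\C(\bd x;y)$,
\[
F_{\lambda\rho}(F_{\kappa\lambda}(f))=f\circ(\theta^{\kappa\lambda}_{\bd x})^{-1}\circ(\theta^{\lambda\rho}_{\bd x})^{-1}=f\circ(\theta^{\kappa\rho}_{\bd x})^{-1}=F_{\kappa\rho}(f),
\]
so the triangle commutes. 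Taking $\lambda=\kappa$ shows each $\theta_{\bd x}$ is an identity and $F$ is the identity functor, which both identifies the canonical isomorphism as the evident identity-on-objects one and makes the whole system coherent.
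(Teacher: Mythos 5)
Your construction is the paper's own: for each $\bd x$ the unique morphism in $Y(n)$ between $\lambda_n\bd x$ and $\kappa_n\bd x$, applied to $\bd x$, gives the canonical isomorphism $\lambdabar\bd x\to\kappabar\bd x$ (the paper's $\alpha(\bd x)$); the comparison is precomposition with it; and both the coherence triangle and the statement that the induced automorphism for $\lambda=\kappa$ is the identity follow, exactly as you say, from uniqueness of morphisms in $Y(n)$. The identity-element check likewise reduces, as you claim, to a triangle in $Y(1)$, so that part of your sketch is fine.

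The gap is in your justification of the one substantive verification, compatibility with the multiproduct $\Gamma$, which is where the paper does its real work. You assert that the two composites ``differ by a single morphism of $\C$ obtained by applying the action functor to a diagram living in the categories $Y(m)$,'' so that codiscreteness finishes the job. But $\Gamma$ is not assembled solely from the $Y$-action: its middle stage applies the functor $\kappa_n(\bd y):\C^n\to\C$ to an arbitrary tuple of morphisms $\br{f_s}$ of $\C$ and then composes with an arbitrary morphism of $\C$, and these $f_s$ are not induced from $Y$. The crucial identity, in the paper's notation,
\[
\alpha(\bd y)\circ\lambda_n\bd y\br{f_s\circ\alpha(\bd x_s)}=\kappa_n\bd y\br{f_s}\circ g,
\]
is proved there by combining three different ingredients: codiscreteness of $Y(j)$ for the pieces that really are induced from $Y$ (the maps $g$, $\phi$, and their $\lambda$-analogues), functoriality of the functor $\kappa_n\bd y$, and, crucially, the fact that the action $Y(n)\to\Cat(\C^n,\C)$ sends $\alpha(\bd y)$ to a \emph{natural} transformation, whose naturality is applied to the arbitrary $f_s$. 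Neither of the last two is a diagram in any $Y(m)$, so ``all diagrams in $Y$ commute'' cannot by itself close this step; you must invoke naturality and functoriality of the action explicitly. Separately, a map of multicategories must also preserve the $\Sigma_n$-actions $\sigma^*$, a requirement your outline never mentions; for this particular comparison that check does reduce to diagrams induced from $Y(n)$ (it is the easiest of the paper's three verifications), but it still has to be stated and carried out.
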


The proof is in Section \ref{isos}.

In order for theorem \ref{isomulticats} to give us an actual underlying multicategory functor, we need the morphisms
between symmetric monoidal categories to induce morphisms between the underlying multicategories.  To this end, we have: 

\begin{theorem}\label{laxtheorem}
Let $\C$ and $\D$ be symmetric monoidal categories, with arbitrary choices $U\C$ and $U\D$ of underlying multicategories.
Then a map of multicategories (a \emph{multifunctor})
\[
U\C\to U\D
\]
determines and is determined by a lax symmetric monoidal functor $\C\to\D$.  This assignment produces a fully faithful functor
from symmetric monoidal categories with lax symmetric monoidal functors to multicategories with multifunctors.
\end{theorem}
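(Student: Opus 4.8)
The plan is to set up a bijection between multifunctors $U\C\to U\D$ and lax symmetric monoidal functors $\C\to\D$, and then to observe that it is functorial, so that it exhibits the claimed fully faithful functor; here ``multicategory'' means \emph{symmetric} multicategory, as it must if the symmetry coherence of a lax symmetric monoidal functor is to be recorded. I would fix, once and for all, one underlying multicategory $U_\kappa\C$ for each symmetric monoidal category $\C$ (Theorem~\ref{formsamulticat}); the final correspondence does not depend on this choice, since by Theorem~\ref{isomulticats} any two such are canonically isomorphic, compatibly with passing to underlying categories. Exploiting the freedom in the functions $\kappa_n$, I may moreover take $\kappabar\bd x$ to be a fixed bracketing of $x_1\otimes\cdots\otimes x_n$ (namely $\bd 1$ when $\bd x$ is empty and $x_1$ when $\bd x=(x_1)$), so that $U\C(\bd x;y)=\C(x_1\otimes\cdots\otimes x_n,y)$, the unary part of $U\C$ is literally $\C$, and, after the ``almost'' correction described before Section~\ref{structure}, multicomposition in $U\C$ is computed from composition and $\otimes$ in $\C$.

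Given a multifunctor $G\colon U\C\to U\D$, restricting it to unary morphisms gives a functor $F\colon\C\to\D$, and I would read off the lax structure of $F$ from the universal multimorphisms of $U\C$: the identities $\iota_{a,b}=\id_{a\otimes b}\in U\C((a,b);a\otimes b)$ and $\epsilon=\id_{\bd 1}\in U\C(\,;\bd 1)$, setting $\phi_{a,b}:=G\iota_{a,b}\in\D(Fa\otimes Fb,\,F(a\otimes b))$ and $\phi_0:=G\epsilon\in\D(\bd 1,F\bd 1)$. Naturality of $\phi$ and its associativity, unit, and symmetry coherence then follow by applying $G$ — which preserves multicomposition and the symmetric action — to the corresponding identities among composites of the $\iota$'s, $\epsilon$, and unary morphisms in $U\C$, identities that hold because the structure of $U\C$ encodes the symmetric monoidal structure of $\C$ (ultimately because all diagrams in the categories $Y(n)$ commute). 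Conversely, from a lax symmetric monoidal functor $(F,\phi,\phi_0)$ I would define $G$ to equal $F$ on objects and on unary morphisms, and on a general $f\in U\C(\bd x;y)=\C(x_1\otimes\cdots\otimes x_n,y)$ to be the composite
\[
Gf\;=\;\bigl(\,Fx_1\otimes\cdots\otimes Fx_n\xrightarrow{\;\phi_{\bd x}\;}F(x_1\otimes\cdots\otimes x_n)\xrightarrow{\;Ff\;}Fy\,\bigr),
\]
where $\phi_{\bd x}$ is the iterated lax structure map assembled from $\phi$ (equal to $\phi_0$ when $\bd x$ is empty and to the identity when $\bd x=(x_1)$). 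Checking that this $G$ is a well-defined multifunctor, independent of the chosen bracketing, is exactly where the three coherence axioms for $(F,\phi,\phi_0)$ are used, together with naturality of $\phi$ and the compatibility of the iterated structure maps with operadic composition in $Y$.

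These two constructions are mutually inverse. Starting from $(F,\phi,\phi_0)$, forming $G$, and reading off its lax structure returns $G\iota_{a,b}=F(\id_{a\otimes b})\circ\phi_{a,b}=\phi_{a,b}$ and $G\epsilon=\phi_0$, with underlying functor $F$. Starting from $G$, forming $(F,\phi,\phi_0)$, and then forming $G'$, one writes an arbitrary $f\in U\C(\bd x;y)$ as the unary morphism $f\colon x_1\otimes\cdots\otimes x_n\to y$ postcomposed with the universal $n$-ary multimorphism $\iota_{\bd x}$, which is an iterated composite of the binary $\iota_{a,b}$'s, so that $Gf=Ff\circ G\iota_{\bd x}=Ff\circ\phi_{\bd x}=G'f$ since $G$ preserves multicomposition. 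Functoriality is then immediate from the displayed formula: the identity lax functor yields the identity multifunctor, and for lax functors $\C\to\D\to\E$ the formula for the composite matches the composite of the two formulas, because the iterated lax structure map of a composite of lax functors is the evident composite of the iterated structure maps. Together with the hom-set bijection just established, this produces the fully faithful functor of the theorem.

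The main obstacle I anticipate is the coherence bookkeeping behind two points used freely above: that the multimorphism sets of $U\C$ are generated, under multicomposition and the symmetric action, by the unary morphisms together with the universal morphisms $\iota_{a,b}$ and $\epsilon$; and that the relations among these reduce \emph{precisely} to the associativity, unit, and symmetry laws, so that multifunctoriality corresponds exactly — nothing missing and nothing extra — to the axioms of a lax symmetric monoidal functor. Making this rigorous requires working carefully with the construction of $U\C$ in Section~\ref{structure}, the operad $Y$ and the commutativity of all diagrams in the $Y(n)$, and in particular with the precise form of the ``almost'' correction to the naive definition of the morphism sets.
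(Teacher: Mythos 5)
Your proposal follows essentially the same route as the paper: fix the constant, left-bracketed choice of $\kappa$ (justified by Theorem \ref{isomulticats}), extract $F$, $\phi_0$, and $\phi$ from the nullary, unary, and binary levels as images of identity morphisms, deduce the three coherence diagrams by applying the multifunctor to chosen multicompositions, and conversely build the multifunctor from a lax functor as $f\mapsto Ff\circ\phi_{\bd x}$ with inductively iterated structure maps, checking multifunctoriality from the coherence axioms. Your sketch of the mutual-inverse and functoriality checks is a modest addition the paper leaves largely implicit, and the remaining bookkeeping you flag is exactly the content of the paper's lemmas in Section \ref{lax}.
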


The proof is in Section \ref{lax}.  As a result,\ Theorem \ref{isomulticats} shows that any choices of underlying multicategories
produce canonically isomorphic underlying multicategory functors.

\begin{corollary}
Any category of symmetric monoidal categories, all of whose morphism functors are at least lax symmetric monoidal, supports
an underlying multicategory functor from symmetric monoidal categories to multicategories.
\end{corollary}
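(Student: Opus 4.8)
The plan is to exhibit the desired functor as a two-step composite, the substantive half of which is provided verbatim by Theorem~\ref{laxtheorem}. Let $\mathcal{S}$ be a category whose objects are symmetric monoidal categories and whose morphisms are functors that are at least lax symmetric monoidal, with composition the evident composition of such structured functors. First I would record that there is a forgetful functor $\mathcal{S}\to\Sym$ to the category $\Sym$ of symmetric monoidal categories and lax symmetric monoidal functors, given by the identity on objects and by remembering only the lax structure constraints of a morphism. The sole point needing verification is that this respects composition and identities; but the constraints of a strong symmetric monoidal functor are isomorphisms and those of a strict one are identities, and in every case the constraints of a composite are assembled from those of the factors by exactly the formula used for lax functors, so passing to the lax level commutes with composition, and the identity morphism of $\mathcal{S}$ plainly goes to the identity lax functor.

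Next I would fix, once and for all, a choice of underlying multicategory $U\C$ for each symmetric monoidal category $\C$; for definiteness one may take $U_\kappa\C$ for one fixed explicit sequence $\kappa_n$, say the one displayed at the start of Section~\ref{properties}, used uniformly for every $\C$, although any choice at all will serve. With these choices in hand, Theorem~\ref{laxtheorem} furnishes a fully faithful functor $\Sym\to\Mult$ to the category $\Mult$ of multicategories and multifunctors. Composing it with the forgetful functor $\mathcal{S}\to\Sym$ of the previous paragraph yields a functor $\mathcal{S}\to\Mult$; on objects it sends $\C$ to the chosen $U\C$, and on a morphism $F$ it returns the multifunctor that Theorem~\ref{laxtheorem} associates to $F$ regarded as a lax symmetric monoidal functor. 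This composite is the asserted underlying multicategory functor.

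I do not expect a genuine obstacle here: Theorems~\ref{formsamulticat}, \ref{isomulticats}, and~\ref{laxtheorem} have already done the real work, and what remains is the bookkeeping above; if there is any difficulty at all it is the pedestrian one, dispatched in the first paragraph, of checking that ``at least lax'' morphisms compose compatibly with lax-level composition. The one clarification worth appending concerns the arbitrariness of the choice of underlying multicategories. If a different sequence $\lambda_n$ is used for some or all objects, Theorem~\ref{isomulticats} supplies canonical isomorphisms $U_\kappa\C\to U_\lambda\C$ that are coherent among themselves; unwinding the constructions of Sections~\ref{isos} and~\ref{lax} shows that these isomorphisms are compatible with the multifunctors induced by any lax symmetric monoidal functor, so they assemble into a natural isomorphism between the two functors $\mathcal{S}\to\Mult$ coming from the two families of choices. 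Thus the underlying multicategory functor is canonical up to a canonical natural isomorphism, and in particular one may with impunity use different underlying multicategories for different symmetric monoidal categories without affecting the outcome.
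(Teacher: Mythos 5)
Your proposal is correct and matches the paper's (implicit) argument: the corollary is stated as an immediate consequence of Theorem~\ref{laxtheorem}, which gives the fully faithful functor on the lax level, together with Theorem~\ref{isomulticats}, which makes the choice of $\kappa$ immaterial up to canonical coherent isomorphism, exactly as you describe. One small correction: the sequence displayed at the start of Section~\ref{properties} is specific to the free symmetric monoidal category $H$ and cannot be used uniformly for every $\C$; for a uniform explicit choice you should instead cite the constant sequence $\kappa_0=0$, $\kappa_1=1$, $\kappa_n=\gamma(m;\kappa_{n-1},1)$ used in Section~\ref{lax}, though, as you note, any choice serves.
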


Our last major theorem gives this underlying multicategory functor a weak left adjoint, where ``weak'' means that the counit map
is only lax natural.

\begin{theorem}\label{ladjtheorem}
The forgetful functor from symmetric monoidal categories to multicategories has a weak left adjoint, 
as long as we include strong
symmetric monoidal functors as maps of symmetric monoidal categories.  The adjunction is weak in the sense that
the counit is only natural up to a natural transformation.
\end{theorem}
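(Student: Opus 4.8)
The plan is to transport the honest adjunction of \cite{EM2} to the present setting. Fix the standard choice of underlying multicategory functor $U=U_\kappa$ exhibited at the beginning of Section~\ref{properties}; by Theorem~\ref{isomulticats} any other choice yields a canonically and coherently isomorphic picture, so this loses no generality, and with this $\kappa$ the combination $\kappabar\bd x$ computed in a \emph{strict} monoidal category is simply the ordered tensor product of the entries. First I would recall the free construction from \cite{EM2}: for a multicategory $M$, the permutative category $FM$ has as objects the finite ordered tuples of objects of $M$, monoidal product given by concatenation with the empty tuple as unit, a morphism $(a_1,\dots,a_n)\to(b_1,\dots,b_m)$ consisting of a function $\phi\colon\{1,\dots,n\}\to\{1,\dots,m\}$ together with, for each $j$, a multimorphism of $M$ out of the (suitably ordered) $\phi$-preimage of $j$ into $b_j$, and symmetry given by block permutations. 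The unit $\eta_M\colon M\to UFM$ sends $a$ to the one-element tuple $(a)$ and is the evident inclusion on multimorphism sets, using the identification $UFM((a_1),\dots,(a_n);(b))=FM((a_1)\otimes\cdots\otimes(a_n),(b))=FM((a_1,\dots,a_n),(b))$; that $\eta_M$ is a multifunctor is checked exactly as in \cite{EM2}.

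Next I would establish the weak universal property. Given a symmetric monoidal category $\C$ and a multifunctor $g\colon M\to U\C$, I define a functor $\tilde g\colon FM\to\C$ on objects by $(a_1,\dots,a_n)\mapsto\kappabar(ga_1,\dots,ga_n)$, and on a morphism (a function $\phi$ together with multimorphisms) by applying $g$ to each constituent multimorphism and splicing the results together with the coherence isomorphisms of $\C$; that this is well defined and functorial follows from the stated fact that all diagrams in each $Y(n)$ commute, together with Mac Lane's coherence theorem. The comparison map $\tilde g(\bd a)\otimes\tilde g(\bd b)\to\tilde g(\bd a\otimes\bd b)$ is the canonical coherence isomorphism of $\C$ relating the two groupings of the concatenated list of $ga$'s and $gb$'s; it is an honest isomorphism rather than an identity precisely because $\C$ need not be permutative, which is where the word ``strong'' enters the statement. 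One then verifies $U\tilde g\circ\eta_M=g$ and the two coherence axioms for a strong symmetric monoidal functor, both routine consequences of coherence in $\C$. When $\C$ is itself permutative and one demands a \emph{strict} functor, $\tilde g$ is forced to be the unique strict functor of \cite{EM2}; for general $\C$ it is determined only up to (unique) monoidal natural isomorphism, and this is the precise sense in which the universal property, and hence the adjunction, is weak.

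Then I would assemble the counit and check the adjunction identities. Set $\epsilon_\C\colon FU\C\to\C$ equal to $\widetilde{\id_{U\C}}$, so that $\epsilon_\C(x_1,\dots,x_n)=\kappabar\bd x$. The triangle $U\epsilon_\C\circ\eta_{U\C}=\id_{U\C}$ holds by construction, and the triangle $\epsilon_{FM}\circ F\eta_M=\id_{FM}$ holds on the nose: $FM$ is permutative, $F$ carries a multifunctor to a strict symmetric monoidal functor, and with the standard $\kappa$ the combination $\kappabar$ computed in $FM$ is plain concatenation, so this is exactly the triangle identity of the strict adjunction of \cite{EM2}. For naturality of $\epsilon$, let $h\colon\C\to\D$ be a strong symmetric monoidal functor; then $h\circ\epsilon_\C$ and $\epsilon_\D\circ FUh$ agree on objects only up to the structure isomorphisms of $h$, because $h\,\kappabar(x_1,\dots,x_n)$ is merely canonically isomorphic to $\kappabar(hx_1,\dots,hx_n)$. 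Assembling these isomorphisms produces a natural isomorphism $h\circ\epsilon_\C\Rightarrow\epsilon_\D\circ FUh$; one checks that these $2$-cells respect composition and units of strong functors, so $\epsilon$ is natural up to a coherent, invertible natural transformation, exactly as asserted, and the $2$-cell is an identity when $h$ is strict, recovering strict naturality there.

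The step I expect to be the main obstacle is this last one: verifying that the comparison $2$-cells attached to $\epsilon$ fit together coherently, so that $\epsilon$ really is a pseudonatural transformation and the associated comonad on symmetric monoidal categories mentioned in the introduction is well behaved. This is where the failure of strictness of the ambient categories is concentrated, and it amounts to fitting the coherence isomorphisms of $\C$, of $\D$, and of $h$ together carefully. A secondary point needing care is the non-uniqueness in the weak universal property: since $\tilde g$ is pinned down only up to monoidal natural isomorphism, one must check that the choices made for different target categories remain mutually compatible, so that the counit and the triangle identities are canonical rather than arbitrary.
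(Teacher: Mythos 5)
Your proposal is correct and follows essentially the same route as the paper: the same free permutative category $L\bd M$ from \cite{EM2}, the same unit, the counit $\varepsilon(\bd x)=\kappabar\bd x$ made strong (not strict) monoidal by the unique isomorphisms in $Y$, strictly commuting adjunction triangles, and naturality of $\varepsilon$ only up to 2-cells built from the structure maps $\xi_n$. The only differences are cosmetic: you add a weak-universal-property framing and treat only strong symmetric monoidal functors (where the comparison 2-cell is invertible), whereas the paper establishes lax naturality with respect to all lax symmetric monoidal functors, the 2-cell being invertible exactly when the functor is strong and an identity when it is strict.
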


The construction is exactly the same as the left adjoint constructed in \cite{EM2}, Theorem 4.2, in the context of permutative
categories and strict maps.  
The proof is in Section \ref{ladj}.
Since the unit of the weak adjunction is actually natural, we do get a comonad, although only a weak monad.  We also
observe the following in Section \ref{ladj}:

\begin{theorem}\label{strict}
The weak left adjoint from Theorem \ref{ladjtheorem} converts multicategories and multifunctors into permutative categories
and strict maps, so the comonad of the weak adjunction converts symmetric monoidal categories and lax symmetric monoidal functors
into permutative categories and strict maps.  The (weak) counit of the adjunction is a homotopy equivalence of categories.
\end{theorem}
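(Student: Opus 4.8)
The plan is to dispose of the first two assertions by inspection and then to concentrate on the counit. Write $U$ for the forgetful functor from symmetric monoidal categories to multicategories and $F$ for the weak left adjoint of Theorem \ref{ladjtheorem}, with unit $\eta\colon\id\to UF$ and counit $\varepsilon\colon FU\to\id$. For the first assertion I would simply note that $F$ \emph{is} the functor of \cite{EM2}, Theorem 4.2: it sends a multicategory $\mathcal M$ to the permutative category whose objects are the finite words in $\Ob\mathcal M$, with symmetric monoidal structure given by concatenation, and it sends a multifunctor to the evident strict map. So $F$, and hence $FU$, already take values in permutative categories and strict maps. For the comonad, whose underlying functor is $FU$: if $f\colon\C\to\D$ is lax symmetric monoidal then by Theorem \ref{laxtheorem} it is determined by a multifunctor $Uf\colon U\C\to U\D$, which $F$ carries to a strict map $FU\C\to FU\D$; thus $FU$ sends symmetric monoidal categories and lax symmetric monoidal functors into permutative categories and strict maps. (By Theorem \ref{isomulticats} the choice of underlying multicategory is immaterial here, since $F$ takes the canonical isomorphisms $U_\kappa\C\to U_\lambda\C$ to isomorphisms, hence strict maps, of permutative categories.)

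The substantive point is that each counit $\varepsilon_\C\colon FU\C\to\C$ is a homotopy equivalence of categories, and here is the argument I would give. First I would exhibit a section: $\eta_{U\C}\colon U\C\to UFU\C$ is a multifunctor, so by Theorem \ref{laxtheorem} it is $U$ applied to a lax symmetric monoidal functor $s_\C\colon\C\to FU\C$, which on objects sends $y$ to the length-one word $\langle y\rangle$. Applying $U$, using faithfulness of $U$ (Theorem \ref{laxtheorem}), and invoking the triangle identity $U\varepsilon_\C\circ\eta_{U\C}=\id_{U\C}$ yields $\varepsilon_\C\circ s_\C=\id_\C$. Next I would construct a natural transformation $\tau\colon\id_{FU\C}\Rightarrow s_\C\circ\varepsilon_\C$. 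On objects $\varepsilon_\C$ sends a word to the product of its entries formed according to $\kappa$, so $s_\C\varepsilon_\C\langle a_1,\dots,a_m\rangle=\langle\kappabar(a_1,\dots,a_m)\rangle$, and in the explicit model of $FU\C$ the identity morphism of $\kappabar(a_1,\dots,a_m)$, regarded as an element of
\[
U\C(\langle a_1,\dots,a_m\rangle;\kappabar(a_1,\dots,a_m))=\C(\kappabar(a_1,\dots,a_m),\kappabar(a_1,\dots,a_m)),
\]
determines a morphism $\tau_{\langle a_1,\dots,a_m\rangle}\colon\langle a_1,\dots,a_m\rangle\to\langle\kappabar(a_1,\dots,a_m)\rangle$ --- intuitively the map collapsing a word to its product. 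Since a natural transformation induces a homotopy between the induced maps of classifying spaces, $s_\C\circ\varepsilon_\C$ is homotopic to the identity after taking classifying spaces; combined with $\varepsilon_\C\circ s_\C=\id_\C$ this exhibits $\varepsilon_\C$ as a homotopy equivalence of categories with homotopy inverse $s_\C$.

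The step I expect to be the main obstacle is the naturality of $\tau$. Proving it requires recalling the morphism sets and composition law of $FU\C$ from \cite{EM2} precisely enough to verify that, for any morphism $w\to w'$ in $FU\C$, collapsing $w'$ after the morphism agrees with collapsing $w$ first and then applying $s_\C\varepsilon_\C$ of the morphism --- essentially the compatibility of the symmetric monoidal functor $\varepsilon_\C$ with these collapse maps. I expect a routine but somewhat lengthy diagram chase, running largely parallel to bookkeeping already carried out in \cite{EM2} in the case of permutative categories and strict maps, so that much of it can be cited rather than repeated. A subsidiary point to settle along the way is that, for the sequence $\kappa$ in use, $\kappabar$ of a length-one word is the object itself and $\kappabar$ of the empty word is the monoidal unit of $\C$; should a chosen $\kappa$ fail this, one inserts the relevant canonical isomorphisms, or, via Theorem \ref{isomulticats}, transports the argument to the convenient choice of Section \ref{properties}.
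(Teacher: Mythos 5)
Your proposal matches the paper's own argument: the first two claims are disposed of by noting that $L$ lands in permutative categories and strict maps, the triangle identity plus the (fully faithful) correspondence of Theorem \ref{laxtheorem} makes the unit a section of $\varepsilon$, and the paper then defines exactly your collapse transformation $\nu_{\bd x}:\bd x\to(\kappabar\bd x)$, given by the unique function to $\{1\}$ together with $\id_{\kappabar\bd x}$, as a natural map $\id\Rightarrow\eta\circ\varepsilon$ (leaving its naturality as a routine check, just as you anticipate). So your approach is essentially identical to the paper's, with your remarks about the convenient choice $\kappa_1=1$, $\kappa_0=0$ reflecting the convention the paper already fixes when defining $\varepsilon$.
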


This is not quite the same as the strictification construction due originally to Isbell \cite{Isbell}, and made explicit by May in
\cite{Einfinity}.  We discuss the comparison and differences in Section \ref{ladj}.

\section{Underlying Multicategories: Structure}\label{structure}
In this section we give the structure of $U_\kappa\C$ as a multicategory, following the definition 
of a multicategory
given in \cite{EM1}, Section 2.  (There are other sources for the definition, but we will use this one for reference.)  The
objects of $U_\kappa\C$ are just the objects of $\C$, and we have already almost defined the morphism sets, namely, given
a list $\bd x=(x_1,\dots,x_n)$ of objects of $\C$ and a target object $y$, we would like to define
\[
U_\kappa\C(\bd x;y)=\C(\kappabar\bd x,y)=\C(\kappa_n(\bd x)(\bd x),y).
\]
However, this definition obscures a subtle technical point that will be important later: if $\bd x\ne\bd x'$, then $U_\kappa\C(\bd x;y)$
must be disjoint from $U_\kappa\C(\bd x';y)$.  For example, if $a,b\in\C$, 
and we choose $\kappabar(a, b)=a\oplus b=\kappabar(a\oplus b)$,
the proposed definition would say that
\[
U_\kappa\C(a,b;a\oplus b)=\C(a\oplus b,a\oplus b)=U_\kappa\C(a\oplus b;a\oplus b),
\]
which makes $\id_{a\oplus b}$ play the role of both a 2-morphism and a 1-morphism.  To avoid this problem rigorously, we would need
to define instead
\[
U_\kappa\C(\bd x;y)=(\bd x,\C(\kappabar\bd x,y))
\]
in order to explicitly keep track of the source string in $U_\kappa\C$.  However, to avoid making notation more cumbersome than strictly
necessary we will continue to drop the explicit source string, and just use $\C(\kappabar\bd x,y)$. When necessary, we will
indicate whether an element is to be considered an element of a particular arity by using a superscript: so in the example, 
$\id_{a\oplus b}^2$ will indicate $\id_{a\oplus b}$ as an element of $U_\kappa\C(a,b;a\oplus b)$, while $\id_{a\oplus b}^1$ will indicate
$\id_{a\oplus b}$ as an element of $U_\kappa\C(a\oplus b;a\oplus b)$.

We must specify identity elements in $U_\kappa\C(a;a)=\C(\kappabar(a),a)$ for each object $a$ of $\C$.  Now $\kappabar(a)
=\kappa_1(a)(a)$, and $\kappa_1(a)$ is an object of $Y(1)$, which also has the canonical identity object $1\in Y(1)$.  We therefore have
a unique isomorphism $\omega(a):\kappa_1(a)\to1$ in $Y(1)$, and applying this to $a$ itself, we get a canonical isomorphism in $\C$:
\[
\omega(a)(a):\kappabar(a)=\kappa_1(a)(a)\to1\cdot a=a,
\]
which we take as our identity element in $U_\kappa\C(a;a)=\C(\kappabar(a),a)$.

We must give a right action of $\Sigma_n$ on the morphism sets with source $n$-tuples of objects. For $\bd x=(x_1,\dots,x_n)$
and $\sigma\in\Sigma_n$,
let's write
\[
\sigma^{-1}\bd x=(x_{\sigma(1)},\dots,x_{\sigma(n)})
\]
in accordance with the standard left action of $\Sigma_n$.
Then we need to define a map
\[
\sigma^*:U_\kappa\C(\bd x;y)\to U_\kappa\C(\sigma^{-1}\bd x;y)
\]
which we will verify does give a right action in Section \ref{properties}.  We define $\sigma^*$ by appealing to the definition
of an operad action to see that
\[
\kappabar\bd x=\kappa_n(\bd x)(\bd x)=(\kappa_n(\bd x)\cdot\sigma)(\sigma^{-1}\bd x).
\]
Since $\kappa_n(\sigma^{-1}\bd x)$ and $\kappa_n(\bd x)\cdot\sigma$ are both objects of $Y(n)$ there is a unique isomorphism
\[
\theta(\bd x,\sigma):\kappa_n(\sigma^{-1}\bd x)\to\kappa_n(\bd x)\cdot\sigma.
\]
As a result, we have a canonical map, which is an isomorphism,
\[
\xymatrix{
\kappabar(\sigma^{-1}\bd x)=\kappa_n(\sigma^{-1}\bd x)(\sigma^{-1}\bd x)
\ar[r]^-{\theta(\bd x,\sigma)}
&\kappa_n(\bd x)\cdot\sigma\sigma^{-1}\bd x=\kappa_n(\bd x)(\bd x)=\kappabar\bd x
}
\]
which induces our desired map
\[
\sigma^*:U_\kappa\C(\bd x;y)=\C(\kappabar\bd x,y)\to\C(\kappabar(\sigma^{-1}\bd x),y)=U_\kappa\C(\sigma^{-1}\bd x;y).
\]
Explicitly, we can say $\sigma^*=\C(\theta(\bd x,\sigma),1)$.

Finally, we must specify a multiproduct that gives the composition in $U_\kappa\C$, and in order to do so efficiently, we need
some notation.  Suppose we are given a final target object $z$, a tuple $\bd y=(y_1,\dots,y_n)$ that we will map to $z$, and for each $s$ with $1\le s\le n$,
a tuple $\bd x_s=(x_{s1},\dots,x_{sj_s})$ that we will map to $y_s$.  Let $j=j_1+\cdots+j_n$, and write $\odot_s\bd x_s$ for 
the $j$-tuple that is
the concatenation
of all the $\bd x_s$'s.  Then we must specify a composition multiproduct
\[
\Gamma:U_\kappa\C(\bd y;z)\times\prod_{s=1}^n U_\kappa\C(\bd x_s;y_s)\to U_\kappa\C(\odot_s\bd x_s;z).
\]
But using the definition of $U_\kappa\C$, this means giving a map
\[
\Gamma:\C(\kappabar\bd y,z)\times\prod_{s=1}^n\C(\kappabar\bd x_s,y_s)\to\C(\kappabar(\odot_s\bd x_s),z).
\]
Now we need just a bit more notation.  
First, 
given a list of items such as $\kappabar\bd x_1,\dots,\kappabar\bd x_n$, we write the
entire list as $\br{\kappabar\bd x_s}$, with the subscript presumed to run over appropriate limits.  Next,
since we are using $\Gamma$ for our composition in $U_\kappa\C$, we will use $\gamma$ for the
operad operation in $Y$ to avoid confusion.  (We used $\Gamma$ in \cite{SMC1} for the operation in $Y$.)

We can now rewrite
\[
\prod_{s=1}^n\C(\kappabar\bd x_s,y_s)=\C^n(\br{\kappabar\bd x_s},\bd y),
\]
and noting that since $\kappa_n(\bd y)$ is an object of $Y(n)$, it induces a functor $\C^n\to\C$.  
Further, both $\kappa_j(\odot_s\bd x_s)$ and $\gamma(\kappa_n\bd y;\br{\kappa_{j_s}\bd x_s})$ are objects
of $Y(j)$, and so there is a unique isomorphism
\[
\phi(\bd y,\br{\bd x_s}):
\kappa_j(\odot_s\bd x_s)\to\gamma(\kappa_n\bd y;\br{\kappa_{j_s}\bd x_s}),
\]
which we can apply to the object $\odot_s\bd x_s$ to obtain an isomorphism which we abusively denote
with the same notation:
\[
\phi(\bd y,\br{\bd x_s}):
\kappabar(\odot_s\bd x_s)
\to
\gamma(\kappa_n\bd y;\br{\kappa_{j_s}\bd x_s})(\odot_s\bd x_s)
=\kappa_n\bd y\br{\kappabar\bd x_s},
\]
and consequently an induced map
\[
\C(\phi(\bd y,\br{\bd x_s}),1):
\C(\kappa_n(\bd y)\br{\kappabar\bd x_s},z)
\to\C(\kappabar(\odot_s\bd x_s),z).
\]

We now define our composition multiproduct
in $U_\kappa\C$ as the following composite:
\[
\xymatrix{
\C(\kappabar\bd y,z)\times\prod_{s=1}^n\C(\kappabar\bd x_s,y_s)
\ar[d]^-{=}
\\\C(\kappabar\bd y,z)\times\C^n(\br{\kappabar\bd x_s},\bd y)
\ar[d]^-{1\times\kappa_n(\bd y)}
\\\C(\kappabar\bd y,z)\times\C(\kappa_n(\bd y)\br{\kappabar\bd x_s},\kappabar\bd y)
\ar[d]^-{\circ}
\\\C(\kappa_n(\bd y)\br{\kappabar\bd x_s},z),
\ar[d]^-{\C(\phi(\bd y,\br{\bd x_s}),1)}
\\\C(\kappabar(\odot_s\bd x_s),z).
}
\]
This completes the specification of the structure of $U_\kappa\C$ as a multicategory.
The verification that this structure satisfies the properties of a multicategory is deferred to Section \ref{properties}.

\section{Isomorphisms of Underlying Multicategories}\label{isos}

In this section we 
prove Theorem \ref{isomulticats}, which
shows that all the multicategories $U_\kappa\C$ for all possible values of $\kappa$
are canonically and coherently isomorphic.  This means that we can reasonably speak of ``the'' underlying
multicategory for a symmetric monoidal category, since any one is unique up to a unique canonical
isomorphism.  

We suppose given two arbitrary sequences 
\[
\kappa_n:\Ob\C^n\to\Ob Y(n)\text{ and }\lambda_n:\Ob\C\to\Ob Y(n),
\]
which determine underlying multicategories $U_\kappa\C$ and $U_\lambda\C$.
The proof of the theorem consists of the definition of the canonical isomorphism $U_\kappa\C\to U_\lambda\C$,
the verification that the coherence diagram commutes, and finally 
the verification that our map is an isomorphism of multicategories.  
We begin with the definition of the canonical isomorphism.

\begin{definition}
Let $\kappa_n:\Ob\C^n\to \Ob Y(n)$ and $\lambda_n:\Ob\C^n\to \Ob Y(n)$ be any two sequences of functions.  Then
given an object $\bd x\in\C^n$, we have the objects $\kappa_n\bd x$ and $\lambda_n\bd x$ of $Y(n)$, and
we define
\[
\alpha(\bd x):\lambda_n\bd x\to\kappa_n\bd x
\]
to be the unique isomorphism in $Y(n)$ with the given source and target. 

To define an isomorphism of multicategories $U_\kappa\C\to U_\lambda\C$, we must give maps on objects
and on sets of morphisms.  On objects, we just use $\id_{\Ob\C}$, since both underlying multicategories
have $\Ob\C$ as their objects.  On morphisms, for each $\bd x\in\C^n$, we have the induced map, which technically
should be labeled $\alpha(\bd x)(\bd x)$,
\[
\xymatrix{
\lambdabar\bd x=\lambda_n(\bd x)(\bd x)
\ar[r]^-{\alpha(\bd x)}
&\kappa_n(\bd x)(\bd x)=\kappabar\bd x,
}
\]
which in turn induces our desired map
\[
\xymatrix@C+20pt{
U_\kappa(\bd x;y)=\C(\kappabar\bd x,y)
\ar[r]^-{\C(\alpha(\bd x),1)}
&\C(\lambdabar\bd x,y)=U_\lambda(\bd x;y).
}
\]
\end{definition}

It follows immediately from the definition that these maps are coherent, in the sense given in the theorem and
in greater detail in the following corollary.

\begin{corollary}
Let $\rho_n:\Ob\C^n\to\Ob Y(n)$ be a third set of functions defining an underlying multicategory $U_\rho\C$, let
\[
\beta(\bd x):\rho_n\bd x\to\lambda_n\bd x\text{ and }\delta(\bd x):\rho_n\bd x\to\kappa_n\bd x
\]
be the unique isomorphisms in $Y(n)$ inducing the alleged isomorphisms of underlying multicategories $U_\lambda\C\to U_\rho\C$
and $U_\rho\C\to U_\kappa\C$.  Then the diagram of induced isomorphisms
\[
\xymatrix{
U_\kappa\C
\ar[rr]^-{\cong}
\ar[dr]_-{\cong}
&&U_\lambda\C
\ar[dl]^-{\cong}
\\&U_\rho\C
}
\]
commutes.  Further, the induced automorphism on any one underlying multicategory is the identity.
\end{corollary}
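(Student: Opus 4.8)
The plan is to reduce everything to the single fact we are granted about the categorical operad $Y$: between any two objects of $Y(n)$ there is exactly one morphism, so a composite of ``the unique morphism $A\to B$'' with ``the unique morphism $B\to C$'' must be ``the unique morphism $A\to C$''. Beyond this, the only ingredients are the contravariant functoriality of $\C(-,y)$ in its first variable and the functoriality of the operadic action of $Y(n)$ on $\C$, which guarantees that applying a composite of morphisms of $Y(n)$ to a fixed tuple $\bd x$ equals the composite of the applied morphisms.

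On objects there is nothing to do: all three multifunctors in the triangle are $\id_{\Ob\C}$, so the triangle commutes on objects and the induced endomultifunctor of a single $U_\kappa\C$ is the identity on objects. For commutativity of the triangle on morphism sets I would fix $\bd x\in\C^n$ and a target $y$ and trace an element of $U_\kappa\C(\bd x;y)=\C(\kappabar\bd x,y)$. The edge $U_\kappa\C\to U_\lambda\C$ acts on this morphism set as $\C(\alpha(\bd x)(\bd x),1)$ and the edge $U_\lambda\C\to U_\rho\C$ as $\C(\beta(\bd x)(\bd x),1)$; since $\C(-,y)$ reverses composition, their composite is $\C\bigl(\alpha(\bd x)(\bd x)\circ\beta(\bd x)(\bd x),1\bigr)$, a map $\C(\kappabar\bd x,y)\to\C(\rho_n(\bd x)(\bd x),y)$. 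By functoriality of the $Y(n)$-action, $\alpha(\bd x)(\bd x)\circ\beta(\bd x)(\bd x)=(\alpha(\bd x)\circ\beta(\bd x))(\bd x)$, and $\alpha(\bd x)\circ\beta(\bd x)$ is a morphism of $Y(n)$ from $\rho_n\bd x$ to $\kappa_n\bd x$, hence equals $\delta(\bd x)$ by uniqueness. Thus the composite morphism-set map is $\C(\delta(\bd x)(\bd x),1)$, which is exactly the morphism-set component of the canonical isomorphism $U_\kappa\C\to U_\rho\C$. Since this holds for all $\bd x$ and $y$, and all three maps are $\id_{\Ob\C}$ on objects, the triangle commutes.

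For the last assertion I would specialize the construction of the canonical isomorphism to the case $\lambda=\kappa$: then $\alpha(\bd x):\kappa_n\bd x\to\kappa_n\bd x$ is the unique morphism of $Y(n)$ with this source and target, and since $\id_{\kappa_n\bd x}$ is such a morphism, uniqueness forces $\alpha(\bd x)=\id_{\kappa_n\bd x}$; applying it to $\bd x$ gives $\id_{\kappabar\bd x}$, so $\C(\alpha(\bd x)(\bd x),1)=\id$ on $\C(\kappabar\bd x,y)$. Combined with the identity map on objects, the induced endomultifunctor of $U_\kappa\C$ is the identity. (This is also the special case of the triangle in which $\rho$, $\lambda$ and $\kappa$ all coincide; conversely, taking $\rho=\kappa$ in the triangle exhibits $U_\lambda\C\to U_\kappa\C$ as a two-sided inverse of $U_\kappa\C\to U_\lambda\C$.)

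I do not anticipate any real obstacle. The mathematical content is entirely the uniqueness of morphisms in $Y(n)$, which is assumed; the only things to watch are bookkeeping matters — keeping a morphism of $Y(n)$ distinct from the morphism of $\C$ it induces on $\bd x$ (the abuse flagged just after the definition above), and being sure that the ``apply to $\bd x$'' operation respects composition because it is part of the categorical operad action — neither of which presents a genuine difficulty.
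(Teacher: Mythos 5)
Your proposal is correct and follows essentially the same route as the paper: the commutativity of the triangle reduces to the fact that the corresponding triangle of isomorphisms in $Y(n)$ commutes (equivalently, uniqueness of morphisms forces $\alpha(\bd x)\circ\beta(\bd x)=\delta(\bd x)$), and the identity claim follows by taking $\lambda=\kappa$ so that $\alpha(\bd x)=\id_{\kappa_n\bd x}$. You merely spell out the contravariant-functoriality and action-functoriality bookkeeping that the paper leaves implicit.
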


\begin{proof}
The isomorphisms of underlying categories are induced by the diagrams of isomorphisms in $Y(n)$ 
\[
\xymatrix{
\kappa_n\bd x
&&\lambda_n\bd x
\ar[ll]_-{\alpha(\bd x)}
\\&\rho_n\bd x,
\ar[ul]^-{\delta(\bd x)}
\ar[ur]_-{\beta(\bd x)}
}
\]
which commute since they are diagrams in $Y(n)$, where all diagrams commute.  If we set $\lambda=\kappa$, then 
$\alpha(\bd x)=\id_{\kappa_n\bd x}$, so the automorphism of $U_\kappa\C$ is the identity.
\end{proof}

We must show that this definition actually preserves multicategory structure.  Since the inducing maps $\alpha(\bd x)$
are all isomorphisms, the induced maps on morphism sets are all bijections, so this will show that we do have
an isomorphism of multicategories.  We begin with the identity structure.  

\begin{proposition}
The identity element $\id_a\in U_\kappa\C(a;a)$ is sent to $\id_a\in U_\lambda\C(a;a)$.
\end{proposition}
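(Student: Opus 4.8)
The plan is to unwind both identity elements and the inducing map, and then observe that the required equality is forced by the uniqueness of morphisms in $Y(1)$. Recall from Section \ref{structure} that the identity element of $U_\kappa\C(a;a)=\C(\kappabar(a),a)$ is the morphism $\omega(a)(a):\kappabar(a)=\kappa_1(a)(a)\to1\cdot a=a$, where $\omega(a):\kappa_1(a)\to1$ is the unique isomorphism in $Y(1)$; to keep the two underlying multicategories apart I will write $\omega_\kappa(a):\kappa_1(a)\to1$ and $\omega_\lambda(a):\lambda_1(a)\to1$ for these unique isomorphisms. The map $U_\kappa\C(a;a)\to U_\lambda\C(a;a)$ is $\C(\alpha(a),1)$, that is, precomposition with $\alpha(a)(a):\lambdabar(a)\to\kappabar(a)$, where $\alpha(a):\lambda_1(a)\to\kappa_1(a)$ is the unique isomorphism in $Y(1)$. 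Hence $\id_a\in U_\kappa\C(a;a)$ is sent to the composite $\omega_\kappa(a)(a)\circ\alpha(a)(a):\lambdabar(a)\to a$, and the task is to identify this with $\omega_\lambda(a)(a)=\id_a\in U_\lambda\C(a;a)$.

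First I would work entirely inside $Y(1)$. There the composite $\omega_\kappa(a)\circ\alpha(a)$ is a morphism $\lambda_1(a)\to1$, and since $Y(1)$ has a unique morphism between any two of its objects, we must have $\omega_\kappa(a)\circ\alpha(a)=\omega_\lambda(a)$.

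Then I would push this equality forward along the operad action at the object $a\in\C$. Because the action of $Y(1)$ on $\C$ is functorial — a morphism $c\to c'$ in $Y(1)$ evaluated at $a$ gives a morphism $c(a)\to c'(a)$ in $\C$, and composites are sent to composites — applying the action to the $Y(1)$-identity just obtained yields $\omega_\kappa(a)(a)\circ\alpha(a)(a)=(\omega_\kappa(a)\circ\alpha(a))(a)=\omega_\lambda(a)(a)$. This is exactly the asserted equality, so $\id_a\in U_\kappa\C(a;a)$ is sent to $\id_a\in U_\lambda\C(a;a)$.

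There is essentially no obstacle here: the only content is bookkeeping — keeping the two families $\omega_\kappa$ and $\omega_\lambda$ distinct, getting the direction of the precomposition map $\C(\alpha(a),1)$ right, and invoking functoriality of the operad action so that the $Y(1)$-identity $\omega_\kappa(a)\circ\alpha(a)=\omega_\lambda(a)$ transports to the corresponding identity of morphisms in $\C$. It is worth recording the pattern cleanly here, since the same strategy — reduce an equality of morphisms in $\C$ to the automatic commutativity of a diagram in some $Y(n)$, then transport it by the operad action — will recur for the $\Sigma_n$-action and for the multiproduct.
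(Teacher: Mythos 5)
Your proposal is correct and follows the paper's own argument: both reduce the claim to the commuting triangle $\omega_\lambda(a)=\omega_\kappa(a)\circ\alpha(a)$ in $Y(1)$, which holds automatically since all diagrams in $Y(1)$ commute, and then apply the operad action at $a$ to transport this to the required equality in $\C$. The only difference is that you spell out the functoriality of the action step a bit more explicitly, which is fine.
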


\begin{proof}
We have the unique isomorphisms in $Y(1)$
\[
\omega(a):\kappa_1(a)\to 1\text{ and }\omega'(a):\lambda_1(a)\to1
\]
which applied to $a$
give us the identity elements in $U_\kappa\C(a;a)=\C(\kappabar a,a)$ and $U_\lambda\C(a;a)=\C(\lambdabar a,a)$.
But we also have the diagram
\[
\xymatrix{
\lambda_1a
\ar[rr]^-{\alpha(a)}
\ar[dr]_-{\omega'(a)}
&&\kappa_1a
\ar[dl]^-{\omega(a)}
\\&1
}
\]
in $Y(1)$, where all diagrams commute, and so
\[
\xymatrix@C+15pt{
U_\kappa\C(a;a)=\C(\kappabar a,a)
\ar[r]^-{\C(\alpha(a),1)}
&\C(\lambdabar a,a)=U_\lambda\C(a;a)
}
\]
sends $\omega(a)$ to $\omega'(a)$, and therefore preserves the identity morphisms.
\end{proof}

Next, we show that composition is preserved.
Following the definition of the composition in $U_\kappa\C$ given in Section \ref{structure}, we assume given a final target object $z$,
an $n$-tuple $\bd y=(y_1,\dots,y_n)$ mapping to $z$, and for each $1\le s\le n$, a $j_s$-tuple $\bd x_s$ that
will map to $y_s$.  Also as before, we say $j=j_1+\cdots j_n$, and the concatenation $\odot_s\bd x_s$
of all the $\bd x_s$'s is therefore a $j$-tuple.

\begin{proposition}
The maps of morphism sets defined above preserve composition, meaning that the following diagram commutes:
\[
\xymatrix{
\C(\kappabar\bd y,z)\times\C^n(\br{\kappabar\bd x_s},\bd y)
\ar[r]^-{\Gamma}
\ar[d]_-{\C(\alpha(\bd y),1)\times\C^n(\br{\alpha(\bd x_s)},1)}
&\C(\kappabar(\odot_s\bd x_s),z)
\ar[d]^-{\C(\alpha(\odot_s\bd x_s),1)}
\\\C(\lambdabar\bd y,z)\times\C^n(\br{\lambdabar\bd x_s},\bd y)
\ar[r]_-{\Gamma}
&\C(\lambdabar(\odot_s\bd x_s),z).
}
\]
\end{proposition}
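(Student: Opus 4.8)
The plan is to realize the large square as a vertical stack of three smaller squares, one for each of the three nontrivial stages in the construction of $\Gamma$ in Section \ref{structure}: applying $1\times\kappa_n(\bd y)$, composing in $\C$, and precomposing with $\phi(\bd y,\br{\bd x_s})$. (The initial identification $\prod_s\C(\kappabar\bd x_s,y_s)=\C^n(\br{\kappabar\bd x_s},\bd y)$ is already absorbed into the statement, so we may start from $\C^n(\br{\kappabar\bd x_s},\bd y)$.) For each of the three squares I will supply the correct vertical map on the intervening object and check that it commutes; since the three squares paste to the one in the statement, this proves the proposition.

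The key auxiliary arrow is the isomorphism in $\C$
\[
g:\lambda_n(\bd y)\br{\lambdabar\bd x_s}\xrightarrow{\lambda_n(\bd y)\br{\alpha(\bd x_s)(\bd x_s)}}\lambda_n(\bd y)\br{\kappabar\bd x_s}\xrightarrow{\alpha(\bd y)\br{\kappabar\bd x_s}}\kappa_n(\bd y)\br{\kappabar\bd x_s},
\]
and the intervening vertical maps will be $\C(\alpha(\bd y)(\bd y),1)\times\C(g,\alpha(\bd y)(\bd y)^{-1})$ on $\C(\kappabar\bd y,z)\times\C(\kappa_n(\bd y)\br{\kappabar\bd x_s},\kappabar\bd y)$ and $\C(g,1)$ on $\C(\kappa_n(\bd y)\br{\kappabar\bd x_s},z)$. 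The first square, the one built from $1\times\kappa_n(\bd y)$, commutes by the naturality of $\alpha(\bd y)$ regarded as a natural transformation between the two functors $\C^n\to\C$ induced by $\lambda_n(\bd y)$ and $\kappa_n(\bd y)$ — applied to the varying morphism $\bd v\in\C^n(\br{\kappabar\bd x_s},\bd y)$ — together with functoriality; the two occurrences of $\alpha(\bd y)(\bd y)$ produced this way are mutually inverse and cancel. The second square, built from composition in $\C$, is an instance of the ordinary naturality (bifunctoriality) of composition, and again the $\alpha(\bd y)(\bd y)^{\pm1}$ inserted on either side of the middle object $\kappabar\bd y$ cancel.

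The third square is the one that actually uses the defining feature of the operad $Y$. It commutes precisely when the square of isomorphisms in $\C$
\[
\phi_\kappa\circ\alpha(\odot_s\bd x_s)(\odot_s\bd x_s)=g\circ\phi_\lambda\colon\lambdabar(\odot_s\bd x_s)\to\kappa_n(\bd y)\br{\kappabar\bd x_s}
\]
holds, where $\phi_\kappa$ and $\phi_\lambda$ denote $\phi(\bd y,\br{\bd x_s})$ formed from $\kappa$ and from $\lambda$ respectively. I would verify this by observing that all four arrows arise by evaluating morphisms of $Y(j)$ at the single tuple $\odot_s\bd x_s$: for $\alpha(\odot_s\bd x_s)$ and for $\phi_\kappa,\phi_\lambda$ this is their definition, while $g$ is the value at $\odot_s\bd x_s$ of $\gamma(\alpha(\bd y);\br{\alpha(\bd x_s)})\colon\gamma(\lambda_n\bd y;\br{\lambda_{j_s}\bd x_s})\to\gamma(\kappa_n\bd y;\br{\kappa_{j_s}\bd x_s})$, using functoriality of the operad composition $\gamma$, the compatibility of the $Y$-action with $\gamma$, and the identity $\gamma(\kappa_n\bd y;\br{\kappa_{j_s}\bd x_s})(\odot_s\bd x_s)=\kappa_n\bd y\br{\kappabar\bd x_s}$ recorded in Section \ref{structure}. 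Both composites in the displayed equation are therefore evaluations at $\odot_s\bd x_s$ of morphisms of $Y(j)$ sharing the source $\lambda_j(\odot_s\bd x_s)$ and the target $\gamma(\kappa_n\bd y;\br{\kappa_{j_s}\bd x_s})$; since there is only one morphism of $Y(j)$ between a given pair of objects, these two morphisms of $Y(j)$ coincide, and evaluating at $\odot_s\bd x_s$ gives the desired equality in $\C$.

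I expect the main obstacle to be purely organizational: keeping the variances straight (these are hom-functors of $\C$, contravariant in the first variable), not conflating an object of $Y(n)$ with the functor it induces on $\C$ nor a morphism of $Y(n)$ with the induced natural transformation, and choosing the intervening map $g$ together with the compensating $\alpha(\bd y)(\bd y)^{-1}$ so that the three squares genuinely telescope to the one in the statement. Once that bookkeeping is pinned down, the first two squares are formal naturality arguments and the third is immediate from the uniqueness of morphisms in $Y(j)$.
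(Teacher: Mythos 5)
Your proposal is correct and follows essentially the same route as the paper: the same decomposition into three squares along the stages $1\times\kappa_n(\bd y)$, $\circ$, and $\C(\phi,1)$, with the same intermediate maps $\C(\alpha(\bd y),1)\times\C(g,\alpha(\bd y)^{-1})$ and $\C(g,1)$, the same cancellation of $\alpha(\bd y)^{\pm1}$, and the same appeal to uniqueness of morphisms in $Y(j)$ for the $\phi$-square. The only cosmetic difference is that you define $g$ as a composite in $\C$ and then identify it with the map induced by the unique isomorphism $\gamma(\lambda_n\bd y;\br{\lambda_{j_s}\bd x_s})\to\gamma(\kappa_n\bd y;\br{\kappa_{j_s}\bd x_s})$, whereas the paper defines $g$ operadically and then shows it equals that composite via naturality and functoriality.
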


\begin{proof}
We expand the diagram as follows using the definition of $\Gamma$, where $g$ is induced by the unique isomorphism
in $Y(j)$ given by
\[
\gamma(\lambda_n\bd y;\br{\lambda_{j_s}\bd x_s})\to\gamma(\kappa_n\bd y;\br{\kappa_{j_s}\bd x_s}),
\]
and $\phi'(\bd y,\br{\bd x_s}):\lambdabar(\odot_s\bd x_s)\to\lambda_n\br{\lambdabar\bd x_s}$ is the $\lambda$-analogue
of $\phi(\bd y,\br{\bd x_s})$:
\[
\xymatrix@C+50pt{
\C(\kappabar\bd y,z)\times\C^n(\br{\kappabar\bd x_s},\bd y)
\ar[r]^-{1\times\kappa_n\bd y}
\ar[d]_-{\C(\alpha(\bd y),1)\times\C^n(\br{\alpha(\bd x_s)},1)}
&\C(\kappabar\bd y, z)\times\C(\kappa_n\bd y\br{\kappabar\bd x_s},\kappabar\bd y)
\ar[ddl]^-{\,\,\,\,\,\,\,\,\,\,\C(\alpha(\bd y),1)\times\C(g,\alpha(\bd y)^{-1})}
\ar[d]^-{\circ}
\\\C(\lambdabar\bd y,z)\times\C^n(\br{\lambdabar\bd x_s},\bd y)
\ar[d]_-{1\times\lambda_n\bd y}
&\C(\kappa_n\bd y\br{\kappabar\bd x_s},z)
\ar[ddl]^-{\C(g,1)}
\ar[d]^-{\C(\phi(\bd y,\br{\bd x_s}),1)}
\\\C(\lambdabar\bd y,z)\times\C(\lambda_n\bd y\br{\lambdabar\bd x_s},\lambdabar\bd y)
\ar[d]_-{\circ}
&\C(\kappabar(\odot_s\bd x_s),z)
\ar[d]^-{\C(\alpha(\odot_s\bd x_s),1)}
\\\C(\lambda_n\bd y\br{\lambdabar\bd x_s},z)
\ar[r]_-{\C(\phi'(\bd y,\br{\bd x_s}),1)}
&\C(\lambdabar(\odot_s\bd x_s),z).
}
\]
The bottom (distorted) square commutes because all the maps are induced by isomorphisms in $Y(j)$, where all
diagrams commute.  The middle (somewhat less distorted) square commutes because we are composing with
both $\alpha(\bd y)$ and its inverse, which then cancel.  The top distorted square 
is the product of two separate squares, and
commutes when restricted to the
first factor $\C(\kappabar\bd y,z)$ by inspection.  This reduces the argument to verifying that the top square commutes when
restricted to the second factor, at which point the desired diagram becomes
\[
\xymatrix{
\C^n(\br{\kappabar\bd x_s},\bd y)
\ar[r]^-{\kappa_n\bd y}
\ar[d]_-{\C^n(\br{\alpha(\bd x_s)},1)}
&\C(\kappa_n\bd y\br{\kappabar\bd x_s},\kappabar\bd y)
\ar[d]^-{\C(g,\alpha(\bd y)^{-1})}
\\\C^n(\br{\lambdabar\bd x_s},\bd y)
\ar[r]_-{\lambda_n\bd y}
&\C(\lambda_n\bd y\br{\lambdabar\bd x_s},\lambdabar\bd y).
}
\]
Tracing a typical element $\br{f_s}\in\C^n(\br{\kappabar\bd x_s},\bd y)$ through the diagram, we find that
commutativity requires us to verify that
\[
\lambda_n\bd y\br{f_s\circ\alpha(\bd x_s)}=\alpha(\bd y)^{-1}\circ\kappa_n\bd y\br{f_s}\circ g,
\]
or equivalently,
\[
\alpha(\bd y)\circ\lambda_n\bd y\br{f_s\circ\alpha(\bd x_s)}=\kappa_n\bd y\br{f_s}\circ g,
\]
where we recall that 
$g$ is induced by the unique map in $Y(j)$ 
\[
\gamma(\lambda_n\bd y;\br{\lambda_{j_s}\bd x_s})\to\gamma(\kappa_n\bd y;\br{\kappa_{j_s}\bd x_s}),
\]
therefore giving the canonical isomorphism
\[
\lambda_n\bd y\br{\lambdabar\bd x_s}\to\kappa_n\bd y\br{\kappabar\bd x_s}.
\]
Our desired equality now becomes the commutativity of the following diagram, in which the top row composes to $g$:
\[
\xymatrix@C+20pt{
\lambda_n\bd y\br{\lambdabar\bd x_s}\ar[r]^-{\alpha(\bd y)}
\ar[d]_-{\lambda_n\bd y\br{f_s\circ\alpha(\bd x_s)}}
&\kappa_n\bd y\br{\lambdabar\bd x_s}
\ar[r]^-{\kappa_n\bd y\br{\alpha(\bd x_s)}}
\ar[dr]_-{\kappa_n\bd y\br{f_s\circ\alpha(\bd x_s)}\,\,\,\,\,\,\,\,\,\,}
&\kappa_n\bd y\br{\kappabar\bd x_s}
\ar[d]^-{\kappa_n\bd y\br{f_s}}
\\\lambdabar\bd y
\ar[rr]_-{\alpha(\bd y)}
&&\kappabar\bd y.
}
\]
The top row does compose to $g$, since it is induced by maps in $Y(j)$, where all diagrams commute.  The triangle
commutes by functoriality of $\kappa_n\bd y$, and the left part of the diagram by naturality of $\alpha(\bd y)$.
The total diagram therefore commutes, which completes the proof that our map of multicategories preserves composition.
\end{proof}

To conclude showing that we have a map of multicategories, we must show that the $\Sigma_n$-actions on
the morphism sets are preserved.  This is the content of the following proposition:

\begin{proposition}
Let $\bd x\in\C^n$, $y\in\C$, and $\sigma\in\Sigma_n$.  Then the following diagram commutes:
\[
\xymatrix@C+30pt{
U_\kappa\C(\bd x;y)
\ar[r]^-{\C(\alpha(\bd x),1)}
\ar[d]_-{\sigma^*}
&U_\lambda\C(\bd x;y)
\ar[d]^-{\sigma^*}
\\U_\kappa\C(\sigma^{-1}\bd x;y)
\ar[r]_-{\C(\alpha(\sigma^{-1}\bd x),1)}
&U_\lambda\C(\sigma^{-1}\bd x;y).
}
\]
\end{proposition}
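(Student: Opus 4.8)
The plan is to argue exactly as in the two preceding propositions: unwind the square of hom-sets into a square of morphisms of $\C$, recognize that square as the image of a diagram in $Y(n)$ under the operad action, and conclude from the fact that all diagrams in $Y(n)$ commute.

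First I would recall from Section~\ref{structure} that the left- and right-hand maps $\sigma^*$ are $\C(\theta(\bd x,\sigma),1)$ and $\C(\theta'(\bd x,\sigma),1)$, where $\theta(\bd x,\sigma)\colon\kappa_n(\sigma^{-1}\bd x)\to\kappa_n(\bd x)\cdot\sigma$ is the unique isomorphism in $Y(n)$ applied to $\sigma^{-1}\bd x$, and $\theta'$ is the corresponding $\lambda$-map; the horizontal maps are $\C(\alpha(\bd x),1)$ and $\C(\alpha(\sigma^{-1}\bd x),1)$. Since all four maps are of the form $\C(-,1)$, that is, are given by precomposition, the square of hom-sets commutes if and only if the square
\[
\xymatrix{
\lambdabar(\sigma^{-1}\bd x)\ar[r]^-{\theta'(\bd x,\sigma)}\ar[d]_-{\alpha(\sigma^{-1}\bd x)}
&\lambdabar\bd x\ar[d]^-{\alpha(\bd x)}\\
\kappabar(\sigma^{-1}\bd x)\ar[r]_-{\theta(\bd x,\sigma)}
&\kappabar\bd x
}
\]
commutes in $\C$, i.e.\ if and only if $\alpha(\bd x)\circ\theta'(\bd x,\sigma)=\theta(\bd x,\sigma)\circ\alpha(\sigma^{-1}\bd x)$ as maps $\lambdabar(\sigma^{-1}\bd x)\to\kappabar\bd x$.

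To display this $\C$-square as the image of a $Y(n)$-square, I would invoke the equivariance of the operad action. On objects it gives $(\kappa_n(\bd x)\cdot\sigma)(\sigma^{-1}\bd x)=\kappa_n(\bd x)(\bd x)=\kappabar\bd x$, exactly as already used in Section~\ref{structure}, and similarly for $\lambda$; on morphisms it gives that $\alpha(\bd x)\cdot\sigma\colon\lambda_n(\bd x)\cdot\sigma\to\kappa_n(\bd x)\cdot\sigma$, applied to $\sigma^{-1}\bd x$, is precisely $\alpha(\bd x)$ applied to $\bd x$. Using this to rewrite the right-hand vertical arrow, the $\C$-square above becomes the image, under evaluation at the single tuple $\sigma^{-1}\bd x$, of the square
\[
\xymatrix{
\lambda_n(\sigma^{-1}\bd x)\ar[r]^-{\theta'(\bd x,\sigma)}\ar[d]_-{\alpha(\sigma^{-1}\bd x)}
&\lambda_n(\bd x)\cdot\sigma\ar[d]^-{\alpha(\bd x)\cdot\sigma}\\
\kappa_n(\sigma^{-1}\bd x)\ar[r]_-{\theta(\bd x,\sigma)}
&\kappa_n(\bd x)\cdot\sigma
}
\]
in $Y(n)$. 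Evaluation at a fixed tuple is a functor $Y(n)\to\C$, being the composite $Y(n)\cong Y(n)\times\{\sigma^{-1}\bd x\}\hookrightarrow Y(n)\times\C^n\to\C$ of the inclusion with the operad action; and the $Y(n)$-square commutes automatically, since $Y(n)$ has a unique morphism between any two objects. Hence the $\C$-square commutes, and with it the original square of hom-sets.

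I expect the only real obstacle to be the bookkeeping in the middle step: one must check that the right-hand arrow $\alpha(\bd x)$, naturally described as ``$\alpha(\bd x)$ applied to $\bd x$'', is correctly re-expressed as ``$\alpha(\bd x)\cdot\sigma$ applied to $\sigma^{-1}\bd x$'', so that all four arrows of the $\C$-square genuinely arise from one functor $Y(n)\to\C$ rather than from evaluations at two different tuples. Once that is arranged, the argument repeats the composition case, relying only on the fact that all diagrams in $Y(n)$ commute.
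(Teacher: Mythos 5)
Your proposal is correct and follows essentially the same route as the paper: reduce the hom-set square to a square of precomposition maps in $\C$, use the equivariance identity that $\alpha(\bd x)$ evaluated at $\bd x$ equals $\alpha(\bd x)\cdot\sigma$ evaluated at $\sigma^{-1}\bd x$, and then conclude because the resulting square is induced by a diagram in $Y(n)$, where all diagrams commute. The paper merely packages the same content as two stacked squares (one commuting by the equivariance rewriting, one by uniqueness of morphisms in $Y(n)$) rather than as a single $Y(n)$-square evaluated at $\sigma^{-1}\bd x$.
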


\begin{proof}
We already have the unique isomorphism $\theta(\bd x,\sigma):\kappa_n(\sigma^{-1}\bd x)\to\kappa_n\bd x\cdot\sigma$ in $Y(n)$ which 
induces the map $\sigma^*$ in $U_\kappa\C$, and analogously let 
\[
\pi(\bd x,\sigma):\lambda_n(\sigma^{-1}\bd x)\to\lambda_n\bd x\cdot\sigma
\]
be the unique isomorphism in $Y(n)$ inducing the map $\sigma^*$ in $U_\lambda\C$.
Then the required square converts and expands as follows:
\[
\xymatrix@C+40pt{
\C(\kappabar\bd x,y)
\ar[r]^-{\C(\alpha(\bd x),1)}
\ar[d]_-{=}
&\C(\lambdabar\bd x,y)
\ar[d]^-{=}
\\\C((\kappa_n\bd x\cdot\sigma)(\sigma^{-1}\bd x),y)
\ar[d]_-{\C(\theta(\bd x,\sigma),1)}
\ar[r]^-{\C(\alpha(\bd x)\cdot\sigma,1)}
&\C((\lambda_n\bd x\cdot\sigma)(\sigma^{-1}\bd x),y)
\ar[d]^-{\C(\pi(\bd x,\sigma),1)}
\\\C(\kappabar(\sigma^{-1}\bd x),y)
\ar[r]_-{\C(\alpha(\sigma^{-1}\bd x),1)}
&\C(\lambdabar(\sigma^{-1}\bd x),y).
}
\]
The top square commutes because both horizontal arrows express the map induced by $\alpha(\bd x)$.  The bottom square
commutes since all the arrows are induced by maps in $Y(n)$, where all diagrams commute.  The induced map 
therefore preserves the $\Sigma_n$-action.  This concludes the proof that we have defined a map of multicategories, which
must be an isomorphism since it is a bijection on objects and all morphism sets, and further the inverse is induced by the maps
$\alpha(\bd x)^{-1}$.
\end{proof}

\section{The Relation to Lax Symmetric Monoidal Functors}\label{lax}

We have now shown that all of the vast collection of possible underlying multicategories $U_\kappa\C$ for $\C$ are canonically
and coherently isomorphic, so we obtain of an underlying multicategory functor that is unique up to unique isomorphism.  This still leaves open
the issue of what sort of maps of symmetric monoidal categories can be used to give maps of underlying multicategories.  This
is answered by Theorem \ref{laxtheorem}, whose proof occupies this section.

Before starting the proof of the theorem, it will be convenient to make some assumptions
about the sequences $\{\kappa_n\}$ defining our underlying multicategories.  
This is justified by the fact that all the underlying multicategories we have defined are canonically and coherently isomorphic, so
we can choose any one of them without loss of generality.
In particular, throughout this proof, we will assume that both underlying multicategories
are defined by sequences 
of functions
that are constant on objects of the same length, so are defined by a sequence of particular objects of $Y(n)$
for each $n$, which we will call just $\kappa_n$. We ask that $\kappa_0=0$, the generator in dimension 0 of the objects of $Y$,
that $\kappa_1=1$, the identity element in $Y(1)$, and that $\kappa_2=m$, the generator in dimension 2 of the objects of $Y$; note
that the action of $Y$ on any symmetric monoidal category $\C$ sends $0$ to the unit object $e_\C$, and sends $m$
to the monoidal product, so $m\cdot(a,b)=a\oplus b$.  For larger values of $n$, we use induction to define
\[
\kappa_n=\gamma(m;\kappa_{n-1},1).
\]
Note however that $\kappa_1\ne\gamma(m;\kappa_0,1)$, since $\gamma(m;0,1)$ acts on an object $a$
to produce $e_\C\oplus a$, while $\kappa_1=1$ acts as the identity.

The effect of our assumed values of the $\kappa_n$'s is that we are parenthesizing all products by piling up the parentheses to the left.

We will drop the argument $\bd x$ of $\kappa_n\bd x$ for an object $\bd x\in\C^n$, since we are assuming $\kappa_n$
is constant, and instead just write $\kappa_n$.

As an additional notational assumption, we suppose give a multifunctor $\Fhat$ and wish to produce a lax symmetric monoidal
functor $F$.  Putting the hat on the multifunctor will distinguish it from its induced monoidal functor, while reducing the number of hats
in the description.

Now we begin the proof by assuming that we are given a multifunctor $\Fhat:U_\kappa\C\to U_\kappa\D$; we must show that this induces a lax
symmetric monoidal functor $F:\C\to\D$.  In particular, we must produce an induced functor $F:\C\to\D$, and show that we have
induced natural transformations $\eta:e_\D\to Fe_\C$ and $\xi:Fa\oplus Fb\to F(a\oplus b)$, subject to the following three diagrams,
where $c_a:a\oplus e_\C\to a$ is the unit isomorphism,
$\tau:a\oplus b\to b\oplus a$ is the commutativity isomorphism, and
$\alpha:(a\oplus b)\oplus c\to a\oplus(b\oplus c)$ is the associativity isomorphism:
\[
\xymatrix{
Fa\oplus e_\D
\ar[r]^-{1\oplus\eta}
\ar[d]_-{c_{Fa}}
&Fa\oplus Fe_\C
\ar[d]^-{\xi}
\\F(a)
&F(a\oplus e_\C),
\ar[l]^-{Fc_a}
}
\]
\[
\xymatrix{
(Fa\oplus Fb)\oplus Fc
\ar[r]^-{\alpha}
\ar[d]_-{\xi\oplus1}
&Fa\oplus(Fb\oplus Fc)
\ar[d]^-{1\oplus\xi}
\\F(a\oplus b)\oplus Fc
\ar[d]_-{\xi}
&Fa\oplus F(b\oplus c)
\ar[d]^-{\xi}
\\F((a\oplus b)\oplus c)
\ar[r]_-{F\alpha}
&F(a\oplus(b\oplus c)),
}
\]
and
\[
\xymatrix{
Fa\oplus Fb
\ar[r]^-{\tau}
\ar[d]_-{\xi}
&Fb\oplus Fa
\ar[d]^-{\xi}
\\F(a\oplus b)
\ar[r]_-{F\tau}
&F(b\oplus a).
}
\]

We produce our lax monoidal functor as follows.

\begin{definition}
The functor $F$ is just the underlying functor of the multifunctor.  Since we are assuming $\kappa_1=1$, the identity for $Y(1)$, it follows that
$\kappabar a=a$ for a single object $a$, and therefore
\[
U_\kappa\C(a;b)=\C(\kappabar a,b)=\C(a,b),
\]
and similarly for $U_\kappa\D$.  We do therefore get a functor $F:\C\to\D$ by restricting to 1-morphisms.

The unit map $\eta:e_\D\to Fe_\C$ arises from the map of 0-morphisms.  Since we are assuming $\kappa_0=0\in Y(0)$, and since $\C^0$
is the terminal category with one object which we denote $*$, 
and further the action of $0\in Y(0)$ by definition selects out the unit element in a symmetric monoidal category,
we see that 
\[
U_\kappa\C(;y)=\C(\kappabar(*),y)=\C(0\cdot*,y)=\C(e_\C,y),
\]
by the definition of the action of $Y$ on the symmetric monoidal category $\C$,
and similarly for $U_\kappa\D$. Now given a multifunctor $\Fhat$, we restrict to 0-morphisms and have in particular a map
\[
\xymatrix{
\C(e_\C,e_\C)=U_\kappa\C(;e_\C)
\ar[r]^-{\Fhat_0}
& U_\kappa\D(;\Fhat e_\C)=\D(e_\D,Fe_\C).
}
\]
We define $\eta:e_\D\to Fe_\C$ to be the image of $\id_{e_\C}$ (technically $\id_{e_\C}^0$) under this map of 0-morphisms.

We next define the structure map $\xi:Fa\oplus Fb\to F(a\oplus b)$. Since the action of $Y$ on a symmetric monoidal category
sends the generator $m\in Y(2)$ to the product map, so $m\cdot(a,b)=a\oplus b$, and we have assumed $\kappa_2$ is
constant at the element $m\in Y(2)$, we can restrict our multifunctor to 2-morphisms, and have
\[
\xymatrix{
\C(a\oplus b,a\oplus b)
\ar[d]^-{=}
\\U_\kappa\C(a,b;a\oplus b)
\ar[d]^-{\Fhat_2}
\\U_\kappa\D(\Fhat a,\Fhat b;\Fhat(a\oplus b))
\ar[d]^-{=}
\\\D(Fa\oplus Fb,F(a\oplus b)),
}
\]
and we take $\xi$ to be the image of $\id_{a\oplus b}$ (technically $\id_{a\oplus b}^2$) under this map.
\end{definition}

We must verify the three coherence diagrams, and begin with the unit coherence diagram.

\begin{lemma}
With the above definitions, the diagram
\[
\xymatrix{
Fa\oplus e_\D
\ar[r]^-{1\oplus\eta}
\ar[d]_-{c_{Fa}}
&Fa\oplus Fe_\C
\ar[d]^-{\xi}
\\F(a)
&F(a\oplus e_\C).
\ar[l]^-{Fc_a}
}
\]
commutes.
\end{lemma}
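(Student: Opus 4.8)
The plan is to express both structure maps $\eta$ and $\xi$ as values of the multifunctor $\Fhat$ on identity elements, and then transport the desired identity through two multiproducts, using that $\Fhat$ preserves $\Gamma$ and identity $1$-morphisms and restricts to $F$ on $1$-morphisms.

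First I would compute, inside $U_\kappa\C$, the composite
\[
\Gamma\bigl(\id_{a\oplus e_\C}^2;\id_a^1,\id_{e_\C}^0\bigr)\in U_\kappa\C(a;a\oplus e_\C)=\C(a,a\oplus e_\C),
\]
where $\id_{e_\C}^0$ is $\id_{e_\C}$ regarded as the relevant $0$-morphism with target $e_\C$. Running the definition of $\Gamma$ from Section \ref{structure} with $\bd y=(a,e_\C)$, $\bd x_1=(a)$, $\bd x_2=()$, so $j=1$ and $\odot_s\bd x_s=(a)$: the functor $\kappa_2=m$ sends $(\id_a,\id_{e_\C})$ to $\id_{a\oplus e_\C}$, composition with $\id_{a\oplus e_\C}^2$ changes nothing, and the final map $\C(\phi(\bd y,\br{\bd x_s}),1)$ precomposes with the unique isomorphism $\kappa_1\to\gamma(\kappa_2;\kappa_1,\kappa_0)$ of $Y(1)$, which here acts on $a$ as the canonical map $a\to a\oplus e_\C$, namely $c_a^{-1}$. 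So this composite equals $c_a^{-1}$.

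Next I would apply $\Fhat$. Since $\Fhat$ preserves $\Gamma$, preserves identity $1$-morphisms, and restricts to $F$ on $1$-morphisms, while $\Fhat_2\id_{a\oplus e_\C}^2=\xi$ and $\Fhat_0\id_{e_\C}^0=\eta$ by the definitions of $\xi$ and $\eta$, this gives
\[
(Fc_a)^{-1}=F\bigl(c_a^{-1}\bigr)=\Gamma\bigl(\xi;\id_{Fa}^1,\eta\bigr)\in\D\bigl(Fa,F(a\oplus e_\C)\bigr).
\]
Then I would evaluate $\Gamma(\xi;\id_{Fa}^1,\eta)$ directly in $U_\kappa\D$ by the same recipe: $\kappa_2=m$ sends $(\id_{Fa},\eta)$ to $\id_{Fa}\oplus\eta$, composition with $\xi$ produces $\xi\circ(1\oplus\eta)$, and the final $\phi$-map precomposes with $c_{Fa}^{-1}$, so $\Gamma(\xi;\id_{Fa}^1,\eta)=\xi\circ(1\oplus\eta)\circ c_{Fa}^{-1}$.

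Comparing the two computations gives $(Fc_a)^{-1}=\xi\circ(1\oplus\eta)\circ c_{Fa}^{-1}$ in $\D(Fa,F(a\oplus e_\C))$; composing on the left with $Fc_a$ and on the right with $c_{Fa}$ and cancelling yields $c_{Fa}=Fc_a\circ\xi\circ(1\oplus\eta)$, which is exactly the asserted commutativity. The only real work is the bookkeeping in the two $\Gamma$-computations, and especially the identification of the structural isomorphisms $\phi(\bd y,\br{\bd x_s})$ with the unit isomorphisms $c_a^{-1}$ and $c_{Fa}^{-1}$; this rests on the fact that all diagrams in $Y(1)$ commute, together with the definition of the $Y$-action, under which the generator $0$ acts as the unit object. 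Once that identification is made, everything else is forced.
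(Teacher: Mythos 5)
Your proposal is correct and follows essentially the same route as the paper: trace the triple $(\id_{a\oplus e_\C}^2,\id_a^1,\id_{e_\C}^0)$ through $\Gamma$ in $U_\kappa\C$ to get $c_a^{-1}$, apply multifunctoriality to equate $(Fc_a)^{-1}$ with $\Gamma(\xi;\id_{Fa},\eta)=\xi\circ(1\oplus\eta)\circ c_{Fa}^{-1}$ in $U_\kappa\D$, and rearrange. The identification of $\phi((a,e_\C),a)$ with $c_a^{-1}$ via uniqueness of morphisms in $Y(1)$ is exactly the paper's key step as well.
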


\begin{proof}
This will follow from the definition of the multicomposition in $U_\kappa\C$ and $U_\kappa\D$, and the multifunctoriality of $\Fhat$.
We observe first that we can write $a\oplus e_\C$ as $\gamma(m;1,0)\cdot a$, and then the unit isomorphism
$c_a:a\oplus e_\C\to a$ is induced by the unique isomorphism $\gamma(m;1,0)\to1$ in $Y(1)$.  
Further, this is the inverse of the following isomorphism, which is a special case of the isomorphism $\phi(\bf y,\br{\bf x_s})$
introduced in the definition of the multiproduct on $U_\kappa\C$:
\[
\phi((a,e_\C),a):1=\kappa_1(a)\to\gamma(\kappa_2(a,e_\C);\kappa_1a,\kappa_0(*))=\gamma(m;1,0).
\]
The key step is to examine
the following composition in the multicategory $U_\kappa\C$:
\[
U_\kappa\C(a,e_\C;a\oplus e_\C)\times U_\kappa\C(a;a)\times U_\kappa\C(;e_\C)\to U_\kappa\C(a;a\oplus e_\C).
\]
In particular, we look at the triple $(\id_{a\oplus e_\C}^2,\id_a^1,\id_{e_\C}^0)$ and find its image in the expanded version
of the composition as follows:
\[
\xymatrix{
\C(m\cdot(a,e_\C),a\oplus e_\C)\times\C^2((a,e_\C),(a,e_\C))
\ar[d]^-{1\times m}
\\\C(m\cdot(a,e_\C),a\oplus e_\C)\times\C(m\cdot(a,e_\C),m\cdot(a,e_\C))
\ar[d]^-{\circ}
\\\C(m\cdot(a,e_\C),a\oplus e_\C)
\ar[d]^-{=}
\\\C(\gamma(m;1,0)\cdot a,a\oplus e_\C)
\ar[d]^-{\C(\phi((a,e_\C),a),1)}
\\\C(a,a\oplus e_\C).
}
\]
Tracing the triple $(\id_{a\oplus e_\C},\id_a,\id_{e_\C})$ through the composition, we see that at the next-to-the-last step,
we have $\id_{a\oplus e_\C}^1$.  This is then composed with $\phi((a,e_\C),a)$, which is the inverse of 
the map inducing
$c_a$, so the composite
is $c_a^{-1}$.

Now we apply the multifunctor $\Fhat$ to the entire composite, so by multifunctoriality, we must end up with $\Fhat(c_a^{-1})=(\Fhat c_a)^{-1}$.
The starting point is by definition the triple $(\xi,\id_{Fa},\eta)$, and we trace this through the composite
\[
U_\kappa\D(Fa,Fe_\C;F(a\oplus e_\C))\times U_\kappa\D(Fa;Fa)\times U_\kappa\D(;Fe_\C)\to U_\kappa\D(Fa;F(a\oplus e_\C)),
\]
which expands to
\[
\xymatrix{
\D(m\cdot(Fa,Fe_\C),F(a\oplus e_\C))\times\D^2((Fa,e_\D),(Fa,Fe_\C))
\ar[d]^-{1\times m}
\\\D(m\cdot(Fa,Fe_\C),F(a\oplus e_\C))\times\D(m\cdot(Fa,e_\D),m\cdot(Fa,Fe_\C))
\ar[d]^-{\circ}
\\\D(m\cdot(Fa,e_\D),F(a\oplus e_\C))
\ar[d]^-{\D(\phi((Fa,e_\D),Fa),1)}
\\\D(Fa,F(a\oplus e_\C)).
}
\]
Our triple now traces as follows, since the last map is, as before, composition with $c_{Fa}^{-1}$:
\[
(\xi,\id_{Fa},\eta)\mapsto(\xi,\id_{Fa}\oplus\eta)
\mapsto\xi\circ(\id_{Fa}\oplus\eta)
\mapsto\xi\circ(\id_{Fa}\oplus\eta)\circ c_{Fa}^{-1}.
\]
Since this must coincide with $(\Fhat c_a)^{-1}=(Fc_a)^{-1}$ by multifunctoriality, our desired diagram does commute.
\end{proof}

We must verify commutativity of the associativity coherence diagram, and claim:

\begin{lemma}
The associativity coherence diagram
\[
\xymatrix{
(Fa\oplus Fb)\oplus Fc
\ar[r]^-{\alpha}
\ar[d]_-{\xi\oplus1}
&Fa\oplus(Fb\oplus Fc)
\ar[d]^-{1\oplus\xi}
\\F(a\oplus b)\oplus Fc
\ar[d]_-{\xi}
&Fa\oplus F(b\oplus c)
\ar[d]^-{\xi}
\\F((a\oplus b)\oplus c)
\ar[r]_-{F\alpha}
&F(a\oplus(b\oplus c)).
}
\]
commutes.
\end{lemma}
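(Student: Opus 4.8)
The plan is to imitate the proof of the unit-coherence lemma: identify one distinguished $3$-morphism of $U_\kappa\C$, namely the associativity isomorphism $\alpha\colon(a\oplus b)\oplus c\to a\oplus(b\oplus c)$ regarded as an element of
\[
U_\kappa\C(a,b,c;a\oplus(b\oplus c))=\C(\kappabar(a,b,c),a\oplus(b\oplus c)),
\]
where $\kappabar(a,b,c)=\gamma(m;m,1)\cdot(a,b,c)=(a\oplus b)\oplus c$ since $\kappa_3=\gamma(m;\kappa_2,1)=\gamma(m;m,1)$. I then exhibit $\alpha$ as two different iterated multicomposites in $U_\kappa\C$, apply $\Fhat$ to each, and compute $\Fhat_3(\alpha)$ both ways using multifunctoriality and the explicit formula for $\Gamma$ from Section \ref{structure}. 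The two answers will be precisely the two legs of the coherence square. Throughout, each instance of $\xi$ below is understood to be its component at the evident pair of objects.

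For the right-hand leg I would use
\[
\alpha=\Gamma\bigl(\id^2_{a\oplus(b\oplus c)};\ \id^1_a,\ \id^2_{b\oplus c}\bigr),
\]
the multicomposite $U_\kappa\C(a,b\oplus c;a\oplus(b\oplus c))\times U_\kappa\C(a;a)\times U_\kappa\C(b,c;b\oplus c)\to U_\kappa\C(a,b,c;a\oplus(b\oplus c))$: tracing the triple through the composite of Section \ref{structure}, the maps $1\times\kappa_2$ and $\circ$ produce $\id_{a\oplus(b\oplus c)}$, and the final $\C(\phi,1)$ precomposes with $\phi\colon\kappa_3=\gamma(m;m,1)\to\gamma(\kappa_2;\kappa_1,\kappa_2)=\gamma(m;1,m)$, which acts on $(a,b,c)$ as $\alpha$ itself. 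Since $\Fhat_2(\id^2_{x\oplus y})=\xi$ by the definition of $\xi$, and $\Fhat_1(\id^1_x)=\id_{Fx}$, multifunctoriality gives $\Fhat_3(\alpha)=\Gamma(\xi;\id_{Fa},\xi)$ in $U_\kappa\D$, and the identical trace there — where $\phi$ now acts on $(Fa,Fb,Fc)$ as the associativity isomorphism of $\D$ — yields
\[
\Fhat_3(\alpha)=\xi\circ(1_{Fa}\oplus\xi)\circ\alpha_{Fa,Fb,Fc},
\]
the composite along the top and right edges of the square. For the left-hand leg I would instead use the pair of composites $\alpha=\Gamma\bigl(\alpha^1;\ \id^3_{(a\oplus b)\oplus c}\bigr)$, where $\alpha^1\in U_\kappa\C((a\oplus b)\oplus c;a\oplus(b\oplus c))=\C((a\oplus b)\oplus c,a\oplus(b\oplus c))$ is $\alpha$ viewed as a $1$-morphism, and $\id^3_{(a\oplus b)\oplus c}=\Gamma\bigl(\id^2_{(a\oplus b)\oplus c};\ \id^2_{a\oplus b},\ \id^1_c\bigr)$; in both of these the relevant $\phi$ is the identity, because $\kappa_3=\gamma(\kappa_2;\kappa_2,\kappa_1)$ holds on the nose, so the traces only compose genuine identities. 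Applying $\Fhat$ and using $\Fhat_1(\alpha^1)=F\alpha$ together with the same identification of the $\xi$'s, one gets $\Fhat_3(\id^3_{(a\oplus b)\oplus c})=\Gamma(\xi;\xi,\id_{Fc})=\xi\circ(\xi\oplus1_{Fc})$ and hence
\[
\Fhat_3(\alpha)=F\alpha\circ\xi\circ(\xi\oplus1_{Fc}),
\]
the composite along the left and bottom edges of the square. Equating the two computed values of $\Fhat_3(\alpha)$ is exactly the commutativity of the associativity diagram.

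The only real obstacle is the bookkeeping inside these traces through the formula of Section \ref{structure}: one must correctly evaluate the operad composites $\gamma(\kappa_2;\kappa_1,\kappa_2)$, $\gamma(\kappa_2;\kappa_2,\kappa_1)$, and $\gamma(\kappa_1;\kappa_3)$ from the assumed values $\kappa_1=1$, $\kappa_2=m$, $\kappa_3=\gamma(m;m,1)$; identify the resulting unique isomorphisms $\phi$ in $Y(3)$ (the identity in the two left-leg steps, the associativity isomorphism after acting on objects in the right-leg step); and keep the genuine identity morphisms of $\C$ and $\D$ carefully distinct from the multicategory identities $\id^1$, as flagged at the start of Section \ref{structure}. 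Once these identifications are in hand, both legs of the coherence square are forced to equal $\Fhat_3(\alpha)$, and the lemma follows.
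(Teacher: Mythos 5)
Your proposal is correct and follows essentially the same route as the paper: it computes $\Fhat(\alpha)$ (the paper's $\Fhat(\alpha_3)$) in two ways, once via $\Gamma(\id^2_{a\oplus(b\oplus c)};\id^1_a,\id^2_{b\oplus c})$ to obtain $\xi\circ(1\oplus\xi)\circ\alpha$, and once via $\Gamma(\alpha_1;\id^3_{(a\oplus b)\oplus c})$ with $\id^3_{(a\oplus b)\oplus c}=\Gamma(\id^2_{(a\oplus b)\oplus c};\id^2_{a\oplus b},\id^1_c)$ to obtain $F\alpha\circ\xi\circ(\xi\oplus1)$, exactly as in the paper's argument. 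The bookkeeping points you flag (the $\phi$'s being the identity on the left leg and the associativity isomorphism on the right leg) are precisely the identifications the paper makes, so no gap remains.
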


\begin{proof}
Again we exploit the definition of the multicomposition in the two underlying multicategories, along with the multifunctoriality of $\Fhat$.
We note first that by our convention, $\kappa_3=\gamma(m;m,1)$, and therefore $\kappa_3(a,b,c)=(a\oplus b)\oplus c$.
Further, the associativity isomorphism $(a\oplus b)\oplus c\to a\oplus(b\oplus c)$ is induced by the unique isomorphism in $Y(3)$
\[
\alpha:\kappa_3=\gamma(m;m,1)\to\gamma(m;1,m).
\]

We will need this associativity isomorphism in two different guises: as a 3-morphism in $U_\kappa\C(a,b,c;a\oplus(b\oplus c))$ and
as a 1-morphism in $U_\kappa\C((a\oplus b)\oplus c;a\oplus(b\oplus c))$.  Let's call the first of these $\alpha_3$, and the second one $\alpha_1$.

We begin the argument by showing that the left vertical column in the desired diagram arises from $\id_{(a\oplus b)\oplus c}^3$
by applying the multifunctor $\Fhat$.
We consider the composition in $U_\kappa\C$ as follows:
\[
U_\kappa\C(a\oplus b,c;(a\oplus b)\oplus c)\times U_\kappa\C(a,b;a\oplus b)\times U_\kappa\C(c;c)
\to U_\kappa\C(a,b,c;(a\oplus b)\oplus c),
\]
and trace the image of the triple $(\id_{(a\oplus b)\oplus c}^2,\id_{a\oplus b}^2,\id_c^1)$ through its expansion:
\[
\xymatrix{
\C(m\cdot(a\oplus b,c),(a\oplus b)\oplus c)\times\C^2((m\cdot(a,b),c),(a\oplus b),c)
\ar[d]^-{1\times m}
\\\makebox[.7\textwidth][c]{$C(m\cdot(a\oplus b,c),(a\oplus b)\oplus c)\times\C(m\cdot(m\cdot(a,b),c),m\cdot(a\oplus b,c))$}
\ar[d]^-{\circ}
\\\C(m\cdot(m\cdot(a,b),c),(a\oplus b)\oplus c)
\ar[d]^-{\C(\phi((a\oplus b,c),((a, b), c)),1)}
\\\C((a\oplus b)\oplus c,(a\oplus b)\oplus c).
}
\]
Now 
in the last map of this display, we have
$\phi((a\oplus b,c),((a, b), c)):\kappa_3\to\gamma(\kappa_2;\kappa_2,\kappa_1)$, but the target here is just
\[
\kappa_3=\gamma(m;m,1),
\]
so the last map is an identity, and our triple ends up at $\id_{(a\oplus b)\oplus c}^3$.  Applying the multifunctor $\Fhat$ throughout
will therefore end us up at $\Fhat(\id^3_{(a\oplus b)\oplus c})$.  When we do so, we are looking at 
the image of the triple
\[
(\Fhat(\id_{(a\oplus b)\oplus c}^2),\Fhat(\id_{a\oplus b}^2),\Fhat(\id_c^1))=(\xi_{a\oplus b,c},\xi_{a,b},\id_{\Fhat c})
\]
in
the composition in $U_\kappa\D$
given by
\[
\xymatrix{
\makebox[.7\textwidth][c]{$U_\kappa\D(\Fhat(a\oplus b),\Fhat c;\Fhat((a\oplus b)\oplus c))\times U_\kappa\D(\Fhat a,\Fhat b;\Fhat(a\oplus b))\times U_\kappa\D(\Fhat c,\Fhat c)$}
\ar[d]^-{\Gamma}
\\U_\kappa\D(\Fhat a,\Fhat b,\Fhat c;\Fhat((a\oplus b)\oplus c)).
}
\]
This expands, using the definition of $\Gamma$, as follows:
\[
\xymatrix{
\makebox[.7\textwidth][c]{$\D(m\cdot(F(a\oplus b),Fc),F((a\oplus b)\oplus c))\times\D^2((m\cdot(Fa,Fb),Fc),(F(a\oplus b),Fc))$}
\ar[d]^-{1\times m}
\\\makebox[.7\textwidth][c]{$\D(m\cdot(F(a\oplus b),Fc),F((a\oplus b)\oplus c))\times\D(m\cdot(m\cdot(Fa,Fb),Fc),m\cdot(F(a\oplus b),Fc))$}
\ar[d]^-{\circ}
\\\D(m\cdot(m\cdot(Fa,Fb),Fc),F((a\oplus b)\oplus c))
\ar[d]^-{\D(\phi((F(a\oplus b),Fc),(Fa,Fb,Fc)),1)}
\\\D((Fa\oplus Fb)\oplus Fc,F((a\oplus b)\oplus c)).
}
\]
But again, the final map here is just the identity, since $\kappa_3=\gamma(m;m,1)$.  Our source triple 
therefore traces as
\[
(\xi_{a\oplus b,c},\xi_{a,b},\id_{Fc})
\mapsto
(\xi,\xi\oplus1)
\mapsto
\xi\circ(\xi\oplus1).
\]
We may conclude that $\Fhat$ sends 
\[
\id^3_{(a\oplus b)\oplus c}\in U_\kappa\C(a,b,c;(a\oplus b)\oplus c)
\]
to 
\begin{gather*}
\xi\circ(\xi\oplus 1)\in U_\kappa\D(\Fhat a,\Fhat b,\Fhat c;\Fhat((a\oplus b)\oplus c))
\\=\D((Fa\oplus Fb)\oplus Fc,F((a\oplus b)\oplus c)),
\end{gather*}
which is the left column of the desired diagram.

Next, we consider the composition in $U_\kappa\C$ 
\[
U_\kappa\C((a\oplus b)\oplus c;a\oplus(b\oplus c))\times U_\kappa\C(a,b,c;(a\oplus b)\oplus c)
\to
U_\kappa\C(a,b,c;a\oplus(b\oplus c)),
\]
and trace the image of the pair $(\alpha_1,\id^3_{(a\oplus b)\oplus c})$, which will turn out to be $\alpha_3$.  
The composition unpacks using the definition 
just as an ordinary composite, since the usual first step is just the identity, and suppressed, since we are assuming $\kappa_1=1$:
\[
\xymatrix{
\C((a\oplus b)\oplus c,a\oplus(b\oplus c))\times\C((a\oplus b)\oplus c,(a\oplus b)\oplus c)
\ar[d]^-{\circ}
\\\C((a\oplus b)\oplus c,a\oplus(b\oplus c)).
}
\]
Tracing the pair $(\alpha_1,\id^3_{(a\oplus b)\oplus c})$ through this, we just get $\alpha$, but now interpreted
as an element of $U_\kappa\C(a,b,c;(a\oplus b)\oplus c)$, so we actually end up with $\alpha_3$.  We can therefore
apply $\Fhat$ throughout, and find that the pair $(\Fhat(\alpha_1),\Fhat(\id^3_{(a\oplus b)\oplus c}))$ is sent to $\Fhat(\alpha_3)$.  But tracing
that through the definition, which just unpacks
\[
\xymatrix{
\makebox[.7\textwidth][c]{$U_\kappa\D(\Fhat((a\oplus b)\oplus c),\Fhat(a\oplus(b\oplus c)))\times U_\kappa\D(\Fhat a,\Fhat b,\Fhat c;\Fhat((a\oplus b)\oplus c))$}
\ar[d]^-{\Gamma}
\\U_\kappa\D(\Fhat a,\Fhat b,\Fhat c;\Fhat(a\oplus(b\oplus c)))
}
\]
as
\[
\xymatrix{
\makebox[.7\textwidth][c]{$\D(F((a\oplus b)\oplus c),F(a\oplus(b\oplus c)))\times\D((Fa\oplus Fb)\oplus Fc,F((a\oplus b)\oplus c))$}
\ar[d]^-{\circ}
\\\D((Fa\oplus Fb)\oplus Fc,F(a\oplus(b\oplus c))),
}
\]
we find that
\[
\Fhat(\alpha_3)=\Fhat(\alpha_1)\circ \Fhat(\id_{(a\oplus b)\oplus c})=F(\alpha)\circ\xi\circ(\xi\oplus1),
\]
where we are justified in removing the hat at the last step since the induced functor $F$ is just the multifunctor $\Fhat$ at the 1-level.
This is the counterclockwise direction of our desired diagram.

In the other direction, we begin with the composition in $U_\kappa\C$ given by
\[
U_\kappa\C(a,b\oplus c;a\oplus(b\oplus c))\times U_\kappa\C(a;a)\times U_\kappa\C(b,c;b\oplus c)
\to U_\kappa\C(a,b,c;a\oplus(b\oplus c)),
\]
and trace the triple given by $(\id^2_{a\oplus(b\oplus c)},\id^1_a,\id^2_{b\oplus c})$ through the expanded definition of the composition:
\[
\xymatrix{
\C(m\cdot(a,b\oplus c),a\oplus(b\oplus c))\times\C^2((a,b\oplus c),(a,b\oplus c))
\ar[d]^-{1\times m}
\\\C(m\cdot(a,b\oplus c),a\oplus(b\oplus c))\times\C(m\cdot(a,b\oplus c),m\cdot(a,b\oplus c))
\ar[d]^-{\circ}
\\\C(m\cdot(a,b\oplus c),a\oplus(b\oplus c))
\ar[d]^-{\C(\phi((a,b\oplus c),(a,b,c)),1)}
\\\C((a\oplus b)\oplus c,a\oplus(b\oplus c)).
}
\]
The triple ends up at $\id_{a\oplus(b\oplus c)}$ before the final map, but the map $\phi((a,b\oplus c),(a,b,c))$ is induced
by the map in $Y(3)$ 
\[
\kappa_3=\gamma(m;m,1)\to\gamma(m;1,m),
\]
which is precisely the associativity isomorphism.  We can therefore conclude that our triple gets sent to $\alpha$, 
interpreted as $\alpha_3$, so
applying the multifunctor $\Fhat$ throughout, we will end up at $\Fhat(\alpha_3)$.

When we do so, we start with the triple 
\[
(\Fhat(\id^2_{a\oplus (b\oplus c)}),\Fhat(\id^1_a),\Fhat(\id^2_{b\oplus c}))=(\xi_{a,b\oplus c},\id_{\Fhat a},\xi_{b,c})
\]
in the composition
\[
\xymatrix{
\makebox[.7\textwidth][c]{$U_\kappa\D(\Fhat a,\Fhat(b\oplus c);\Fhat(a\oplus(b\oplus c)))\times U_\kappa\D(\Fhat a;\Fhat a)\times U_\kappa\D(\Fhat b,\Fhat c;\Fhat(b\oplus c))$}
\ar[d]^-{\Gamma}
\\U_\kappa\D(\Fhat a,\Fhat b,\hat Fc;\Fhat(a\oplus(b\oplus c))).
}
\]
Expanding using the definition of $\Gamma$, we get
\[
\xymatrix{
\makebox[.7\textwidth][c]{$\D(m\cdot(Fa,F(b\oplus c)),F(a\oplus(b\oplus c)))\times\D^2((Fa,m\cdot(Fb,Fc)),(Fa,F(b\oplus c)))$}
\ar[d]^-{1\times m}
\\\makebox[.7\textwidth][c]{$\D(m\cdot(Fa,F(b\oplus c)),F(a\oplus(b\oplus c)))\times\D(m\cdot(Fa,m\cdot(Fb,Fc)),m\cdot(Fa,F(b\oplus c)))$}
\ar[d]^-{\circ}
\\\D(m\cdot(Fa,m\cdot(Fb,Fc)),F(a\oplus(b\oplus c)))
\ar[d]^-{\D(\phi((Fa,F(b\oplus c)),(Fa,Fb,Fc)),1)}
\\\D((Fa\oplus Fb)\oplus Fc,F(a\oplus(b\oplus c))).
}
\]
Now $\phi((Fa,F(b\oplus c)),(Fa,Fb,Fc))$ is again induced by the unique isomorphism in $Y(3)$
\[
\gamma(m;m,1)\to\gamma(m;1,m),
\]
which is precisely the isomorphism inducing the associativity isomorphism $\alpha$.  Tracing our triple $(\xi_{a,b\oplus c},\id_{Fa},\xi_{b,c})$
through the composition, we get
\[
(\xi_{a,b\oplus c},\id_{Fa},\xi_{b,c})
\mapsto
(\xi_{a,b\oplus c},\id_a\oplus \xi_{b,c})
\mapsto
\xi\circ(1\oplus\xi)
\mapsto
\xi\circ(1\oplus\xi)\circ\alpha.
\]
We conclude that in this direction, we have
\[
\Fhat(\alpha_3)=\xi\circ(1\oplus\xi)\circ\alpha,
\]
so identifying the two calculations of $\Fhat(\alpha_3)$, we find that
\[
F(\alpha)\circ\xi\circ(\xi\oplus1)=\Fhat(\alpha_3)=\xi\circ(1\oplus\xi)\circ\alpha,
\]
which says precisely that our desired coherence diagram for associativity does commute.
\end{proof}

We conclude this direction of the argument by verifying commutativity of the transposition coherence diagram,
which we rewrite by switching our two variables.

\begin{lemma}
The transposition coherence diagram
\[
\xymatrix{
Fb\oplus Fa
\ar[r]^-{\tau}
\ar[d]_-{\xi}
&Fa\oplus Fb
\ar[d]^-{\xi}
\\F(b\oplus a)
\ar[r]_-{F\tau}
&F(a\oplus b)
}
\]
commutes.
\end{lemma}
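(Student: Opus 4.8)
The plan is to exploit the one ingredient of the multicategory structure that the two preceding lemmas did not use, namely the symmetric group action, together with the fact that a multifunctor preserves it. Write $\sigma\in\Sigma_2$ for the transposition. Since $\kappa_2=m$ we have $U_\kappa\C(a,b;a\oplus b)=\C(a\oplus b,a\oplus b)$, and, by the way the symmetric monoidal structure is built from the operad $Y$, the isomorphism $\theta((a,b),\sigma)(b,a)\colon b\oplus a\to a\oplus b$ defining $\sigma^*$ at arity $2$ is exactly the commutativity constraint $\tau$ — the one which, after applying $F$, is the bottom edge of our square. Hence $\sigma^*(\id_{a\oplus b}^2)\in U_\kappa\C(b,a;a\oplus b)=\C(b\oplus a,a\oplus b)$ is precisely this $\tau$; I will write $\tau_2$ for it regarded as a $2$-morphism there, and $\tau_1$ for the same map regarded as a $1$-morphism in $U_\kappa\C(b\oplus a;a\oplus b)$. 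The argument then comes down to computing $\Fhat(\tau_2)$ two ways.

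First, since $\Fhat$ preserves the $\Sigma_2$-action,
\[
\Fhat_2(\tau_2)=\Fhat_2(\sigma^*\id_{a\oplus b}^2)=\sigma^*(\Fhat_2\id_{a\oplus b}^2)=\sigma^*(\xi),
\]
and the same identification of $\sigma^*$ with the commutativity constraint, this time in $\D$, gives $\sigma^*(\xi)=\xi\circ\tau$ with $\tau\colon Fb\oplus Fa\to Fa\oplus Fb$ the top edge of the square; this is the composite along the top and right edges. Second, the unary multiproduct
\[
\Gamma\colon U_\kappa\C(b\oplus a;a\oplus b)\times U_\kappa\C(b,a;b\oplus a)\to U_\kappa\C(b,a;a\oplus b)
\]
sends $(\tau_1,\id_{b\oplus a}^2)$ to $\tau_2$; as in the associativity lemma this $\Gamma$ unpacks to ordinary composition, the relevant instance of $\phi$ being an identity since $\gamma(1;m)=m$ in $Y(2)$. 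Applying $\Fhat$ and multifunctoriality then gives
\[
\Fhat_2(\tau_2)=\Gamma(\Fhat_1\tau_1;\Fhat_2\id_{b\oplus a}^2)=\Gamma(F\tau;\xi)=F\tau\circ\xi,
\]
the composite along the left and bottom edges. Equating the two expressions for $\Fhat_2(\tau_2)$ gives $\xi\circ\tau=F\tau\circ\xi$, which is commutativity of the transposition coherence square.

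The only real work is bookkeeping: keeping straight which structure map $\xi$ and which commutativity isomorphism $\tau$ is attached to the pair $(a,b)$ and which to $(b,a)$, matching the abstract map $\theta((a,b),\sigma)(b,a)$ with the commutativity constraint via the description of $Y$ in \cite{SMC1}, and confirming that the instances of $\phi$ in the unary composites collapse to identities so that $\Gamma$ really is ordinary composition. Each of these is routine and mirrors something already carried out in the preceding lemmas, so I anticipate no genuine obstacle; the argument should be markedly shorter than the associativity case, needing only a single, essentially trivial, unary multiproduct.
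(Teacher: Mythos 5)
Your proposal is correct and follows essentially the same route as the paper: both compute $\Fhat(\tau_2)$ once via equivariance of $\Fhat$ applied to $\id^2_{a\oplus b}$ (giving $\xi\circ\tau$) and once via the unary multiproduct sending $(\tau_1,\id^2_{b\oplus a})$ to $\tau_2$ together with multifunctoriality (giving $F\tau\circ\xi$), then equate. The only difference is that you make explicit the identifications (that $\theta$ at arity $2$ acts as the commutativity constraint and that the relevant $\phi$ collapses to an identity) which the paper uses tacitly.
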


\begin{proof}
Once again we have a morphism in $\C$ that represents two different morphisms in $U_\kappa\C$: in this case 
the transposition isomorphism $\tau\in\C(b\oplus a,a\oplus b)$ represents a 1-morphism in $U_\kappa\C(b\oplus a;a\oplus b)$ which
we will write as $\tau_1$, and also a 2-morphism in $U_\kappa\C(b,a;a\oplus b)$ which we will write as $\tau_2$.

Next, since $\Fhat$ is a multifunctor, it is in particular equivariant, so the diagram
\[
\xymatrix{
U_\kappa\C(a,b;a\oplus b)
\ar[r]^-{\Fhat}
\ar[d]_-{\tau^*}
&U_\kappa\D(\Fhat a,\Fhat b;\Fhat(a\oplus b))
\ar[d]^-{\tau^*}
\\U_\kappa\C(b,a;a\oplus b)
\ar[r]_-{\Fhat}
&U_\kappa\D(\Fhat b,\Fhat a;\Fhat(a\oplus b))
}
\]
must commute.  Tracing the element $\id^2_{a\oplus b}$ clockwise in the underlying categories from the upper left of the diagram, we have
\[
\id_{a\oplus b}
\mapsto
\xi
\mapsto
\xi\circ\tau,
\]
as an element of $\D(Fb\oplus Fa,F(a\oplus b))$,
which is the clockwise composite in the diagram we wish to verify.

Now tracing $\id^2_{a\oplus b}$ counterclockwise, we get
\[
\id_{a\oplus b}
\mapsto
\tau_2
\mapsto
\Fhat(\tau_2).
\]
Since the diagram commutes, we conclude that
\[
\Fhat(\tau_2)=\xi\circ\tau.
\]

Next, we have the composition in $U_\kappa\C$
\[
\xymatrix{
U_\kappa\C(b\oplus a,a\oplus b)\times U_\kappa\C(b,a;b\oplus a)
\ar[r]^-{\Gamma}
&U_\kappa\C(b,a;a\oplus b),
}
\]
and expanding using the definition of composition in $U_\kappa\C$, we see that the pair $(\tau_1,\id^2_{b\oplus a})$ gets sent to
$\tau_2$.  Applying the multifunctor $\Fhat$, it follows that the pair $(\Fhat(\tau_1),\Fhat(\id^2_{b\oplus a}))$ gets sent to $\Fhat(\tau_2)$.  
But this then says that under
\[
\xymatrix{
U_\kappa\D(F(b\oplus a),F(a\oplus b))\times U_\kappa\D(Fb,Fa;F(b\oplus a))
\ar[r]^-{\Gamma}
&U_\kappa\D(Fb,Fa;F(a\oplus b)),
}
\]
the pair $(\Fhat(\tau_1),\xi)$ gets sent to $\Fhat(\tau_2)$.  Since $F$ is just the restriction of $\Fhat$ to 1-morphisms, we see that
$\Fhat(\tau_1)=F(\tau)$, and further the definition of $\Gamma$ in the display sends $(\Fhat(\tau_1),\xi)$ to $\Fhat(\tau_1)\circ\xi$.
Combining the two calculations, we conclude that
\[
\xi\circ\tau=F(\tau_2)=F(\tau)\circ\xi,
\]
which says that our desired diagram commutes.  We therefore do have a lax symmetric monoidal functor.
\end{proof}

We turn now to the reverse direction of the theorem: given a lax symmetric monoidal functor $F$, we must produce
a multifunctor $U_\kappa F:U_\kappa\C\to U_\kappa\D$ on underlying multicategories.  Again, since all underlying multicategories are canonically and coherently
isomorphic, it suffices to produce a multifunctor between the underlying multicategories given by the sequence
of objects $\kappa_n\in Y(n)$, where $\kappa_0=0$, $\kappa_1=1$, and for $n\ge2$, $\kappa_n=\gamma(m;\kappa_{n-1},1)$.
We begin by generalizing the structure map $\xi:Fa\oplus Fb\to F(a\oplus b)$.  Note that we can express $\xi$ as
\[
\xi:\kappabar(Fa,Fb)\to F\kappabar(a,b).
\]

\begin{definition}
Let $\bd x$ be an object of $\C^n$, with $F\bd x$ the corresponding object of $\D^n$, so
\[
F\bd x=(Fx_1,\dots,Fx_n).
\]
We define a map
\[
\xi_n:\kappabar F\bd x\to F\kappabar\bd x
\]
first for $n=0$, in which case $\bd x=*\in\C^0$, $\kappabar\bd x=e_\C$ and $\kappabar F\bd x=e_\D$.  Then we define $\xi_0$ to be
\[
\xymatrix{
\kappabar F\bd x=e_\D
\ar[r]^-{\eta}
&Fe_\C=F\kappabar\bd x.
}
\]
For $n\ge1$ we use induction on $n$, with $\xi_1=\id_{Fx_1}$.  For $n\ge2$, let $\xhat=(x_1,\dots,x_{n-1})$, that is, $\bd x$ with the last entry
deleted.  Note that since $\kappa_n=\gamma(m;\kappa_{n-1},1)$, we have
\[
\kappabar\bd x=\kappabar\xhat\oplus x_n.
\]
Then we define $\xi_n$ as the composite
\[
\xymatrix@C+15pt{
\kappabar F\bd x=\kappabar F\xhat\oplus Fx_n
\ar[r]^-{\xi_{n-1}\oplus1}
&F\kappabar\xhat\oplus Fx_n
\ar[r]^-{\xi}
&F(\kappabar\xhat\oplus x_n)=F\kappabar\bd x.
}
\]
In particular, $\xi_2=\xi$.  
\end{definition}

\begin{remark}
We can't actually start our induction at $n=0$, because the definition
\[
\kappa_n=\gamma(m;\kappa_{n-1},1)
\]
does not apply when $n=1$: we have $\kappa_1=1$, but $\gamma(m;0,1)\ne1$ in $Y(1)$.  This is reflected in the fact that
in a general symmetric monoidal category, we don't have $x=e_\C\oplus x$: they're canonically isomorphic, but not equal.
\end{remark}

This now allows us to define the structure of our underlying multifunctor $U_\kappa F$, which we will often just write as $F$.

\begin{definition}
Let $F:\C\to\D$ be a lax symmetric monoidal functor with structure maps $\eta:e_\D\to Fe_\C$ and $\xi:Fa\oplus Fb\to F(a\oplus b)$.
We define the underlying multifunctor by giving it on objects and morphism sets.  On objects, we just use the map given
by the functor $F$.  For morphism sets, let $\bd x\in\C^n$ and $y\in\C$; we must produce a map
\[
F_n:U_\kappa\C(\bd x;y)\to U_\kappa\D(F\bd x;Fy).
\]
For all $n$, we define $F_n$ as the composite
\[
\xymatrix@C+5pt{
U_\kappa\C(\bd x;y)=\C(\kappabar\bd x,y)
\ar[r]^-{F}
&\D(F\kappabar\bd x;Fy)
\ar[r]^-{\D(\xi_n,1)}
&\D(\kappabar F\bd x;Fy)=U_\kappa\D(F\bd x;Fy).
}
\]
Note that when $n=0$, this becomes
\[
\xymatrix{
U_\kappa\C(;y)=\C(e_\C,y)
\ar[r]^-{F}
&\D(Fe_\C,Fy)
\ar[r]^-{\D(\eta,1)}
&\D(e_\D,Fy)=U_\kappa\D(;Fy).
}
\]
This completes the definition of the structure of $U_\kappa F$.
Note that in the special case $n=1$, since $\kappabar x=x$ (because $\kappa_1=1$), $F_1$ coincides with
the original functor $F$.
\end{definition}

We must show that $U_\kappa F$ preserves all the multicategory structure.  In particular, it must preserve
\begin{enumerate}
\item
the identity maps,
\item
the composition, and
\item
the symmetric group actions.
\end{enumerate}

For preservation of the identity maps, we merely note that $F_1$ coincides with the original functor, as noted above,
and therefore the identity maps are preserved.

We turn to verifying the preservation of composition, which requires some preliminary definitions and lemmas.  

\begin{definition}
Let $\bd x\in\C^j$, let $j=q+r$, and let $\bd x=\bd x'\odot\bd x''$ with $\bd x'\in\C^q$ and $\bd x''\in \C^r$, so
$\bd x'=(x_1,\dots,x_q)$ and $\bd x''=(x_{q+1},\dots,x_j)$.  We allow the possibility that either $q=0$ or $r=0$,
but do assume that $j>0$.
Then we
define a map
\[
\phi_{qr}\bd x:\kappabar\bd x\to\kappabar\bd x'\oplus\kappabar\bd x''
\]
by applying the unique isomorphism
\[
\phi_{qr}:\kappa_j\to\gamma(m;\kappa_q,\kappa_r)
\]
in $Y(j)$ to the object $\bd x$.  
\end{definition}

Our first lemma towards preservation of composition relates these maps $\phi_{qr}$ to the previously defined $\xi_j$'s:

\begin{lemma}\label{xi1}
The following diagram commutes:
\[
\xymatrix@C+15pt{
\kappabar F\bd x
\ar[r]^-{\phi_{qr}F\bd x}
\ar[dd]_-{\xi_j}
&\kappabar F\bd x'\oplus\kappabar F\bd x''
\ar[d]^-{\xi_q\oplus\xi_r}
\\&F\kappabar\bd x'\oplus F\kappabar\bd x''
\ar[d]^-{\xi}
\\F\kappabar\bd x
\ar[r]_-{F\phi_{qr}\bd x}
&F(\kappabar\bd x'\oplus\kappabar\bd x'').
}
\]
\end{lemma}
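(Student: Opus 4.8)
The plan is to prove Lemma \ref{xi1} by induction on the length $r$ of the second block $\bd x''$, exploiting the inductive definition of $\xi_j$ and of the comparison isomorphisms $\phi_{qr}$ in the operad $Y(j)$, together with the associativity coherence of $\xi$ already established. The base cases are $r=0$ and $r=1$. When $r=0$ we have $\bd x''=*$, $\kappabar\bd x''=e_\C$, $\bd x'=\bd x$, and $\phi_{q0}:\kappa_q\to\gamma(m;\kappa_q,\kappa_0)=\gamma(m;\kappa_q,0)$; chasing the diagram reduces to the already-verified unit coherence diagram for $\xi$ together with the definition $\xi_0=\eta$. When $r=1$ we have $\bd x''=(x_j)$, $\kappabar\bd x''=x_j$, $\xi_1=\id$, and $\phi_{q1}:\kappa_j\to\gamma(m;\kappa_{j-1},\kappa_1)=\gamma(m;\kappa_{j-1},1)=\kappa_j$ (by our convention $\kappa_j=\gamma(m;\kappa_{j-1},1)$), so $\phi_{q1}$ is the identity and the diagram collapses to the very definition of $\xi_j$ from $\xi_{j-1}$ and $\xi$.

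For the inductive step, suppose the lemma holds for all decompositions with second block of length less than $r$, and let $\bd x=\bd x'\odot\bd x''$ with $|\bd x''|=r\ge2$. Write $\bd x''=\widehat{\bd x''}\odot(x_j)$, so that $\kappabar\bd x''=\kappabar\widehat{\bd x''}\oplus x_j$, and correspondingly $\bd x=(\bd x'\odot\widehat{\bd x''})\odot(x_j)$. In $Y(j)$ all diagrams commute, so the isomorphism $\phi_{qr}:\kappa_j\to\gamma(m;\kappa_q,\kappa_r)$ factors (uniquely, hence in any way we please) through $\gamma(m;\kappa_{j-1},1)$ via $\phi_{q+r-1,1}=\id$ followed by the map $\gamma(m;\phi_{q,r-1},\id):\gamma(m;\kappa_{j-1},\kappa_1)\to\gamma(m;\gamma(m;\kappa_q,\kappa_{r-1}),\kappa_1)$ and then the associativity isomorphism $\alpha:\gamma(m;\gamma(m;\kappa_q,\kappa_{r-1}),1)\to\gamma(m;\kappa_q,\gamma(m;\kappa_{r-1},1))=\gamma(m;\kappa_q,\kappa_r)$. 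Applying all of this to objects, the big square in the lemma is assembled from: the definition of $\xi_j$ in terms of $\xi_{j-1}$ and $\xi$ (bottom-left triangle), the definition of $\xi_r$ in terms of $\xi_{r-1}$ and $\xi$ (top-right triangle), the inductive hypothesis applied to the decomposition $(\bd x'\odot\widehat{\bd x''})\odot(x_j)$ — wait, rather to $\bd x'\odot\widehat{\bd x''}$ inside the first $j-1$ slots — a naturality square for $\xi$ with respect to $\xi_q\oplus\id$, and finally one instance of the associativity coherence diagram for $\xi$, with $a=\kappabar F\bd x'$, $b=\kappabar F\widehat{\bd x''}$, $c=Fx_j$.

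The main obstacle is bookkeeping: making sure the instance of the inductive hypothesis is applied to the correct sub-decomposition (namely to $\bd x'\odot\widehat{\bd x''}$, giving a map $\kappabar F(\bd x'\odot\widehat{\bd x''})\to F\kappabar\bd x'\oplus F\kappabar\widehat{\bd x''}$ compatible with $\phi_{q,r-1}$), and then tensoring that square on the right with $Fx_j$ and splicing it to an associativity-coherence instance for $\xi$ so that the composite $\xi\circ(\id\oplus\xi)\circ\alpha$ produced by the associativity diagram matches the leg $\xi\circ(\xi_q\oplus\xi_r)\circ(\phi_{qr}F\bd x)$ of the target square. Every small region either commutes by a definitional unfolding of $\xi_k$, by functoriality of $F$, by naturality of $\xi$, by the coherence lemma for associativity of $\xi$, or because it is the image under the action of some object-tuple of a diagram in $Y(j)$, where all diagrams commute; so once the decomposition is fixed the verification is a routine, if slightly tedious, pasting of commuting squares. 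I would present the inductive step as one large labeled diagram with each region annotated by its reason, rather than grinding through elements.
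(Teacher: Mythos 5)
Your overall strategy is the same as the paper's: induction on $r$, splitting off the last entry of $\bd x''$, with the inductive step assembled from the definitions of $\xi_j$ and $\xi_r$, naturality of $\xi$, the associativity coherence for $\xi$, and diagrams induced from $Y(j)$ (where everything commutes). The $r=0$ case and the $q\ge1$, $r=1$ case are also handled as in the paper.

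However, there is a genuine gap in your base case $r=1$: you assert that $\phi_{q1}:\kappa_j\to\gamma(m;\kappa_{j-1},\kappa_1)$ is the identity ``by our convention,'' but that convention $\kappa_j=\gamma(m;\kappa_{j-1},1)$ only holds for $j\ge2$. When $q=0$ and $r=1$ (so $j=1$), we have $\kappa_1=1\ne\gamma(m;\kappa_0,\kappa_1)=\gamma(m;0,1)$ --- this is exactly the point of the Remark following the definition of $\xi_n$ --- so $\phi_{01}$ is \emph{not} the identity; applied to an object $x$ it is the canonical isomorphism $x\to e_\C\oplus x$, i.e.\ $\tau\circ c^{-1}$. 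In that case the square of the lemma asserts the \emph{left}-unit compatibility $\xi\circ(\eta\oplus 1)=F(\tau c^{-1})$, which is not one of the three coherence axioms assumed for a lax symmetric monoidal functor (only the right-unit axiom with $c_a:a\oplus e_\C\to a$ is given); it has to be derived by pasting the right-unit axiom with the symmetry axiom for $\xi$ and naturality of $\tau$. Since your induction on $r$ with $q=0$ bottoms out at precisely this case, every decomposition with $q=0$ is unproved as written. The fix is the separate argument the paper gives for $q=0$, $r=1$: identify $\phi_{01}x=\tau c^{-1}$, verify the displayed coherence square for $c\tau$ using the $\tau$-coherence and the $(\eta,\xi)$-coherence, and then observe that with $\xi_0=\eta$, $\xi_1=\id$ this is the desired diagram.
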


\begin{proof}
We begin with the special case $r=0$, which requires its own argument.  Applying the map in $Y(j)$
\[
\phi_{j0}:\kappa_j\to\gamma(m;\kappa_j,0)
\]
to $\bd x\in\C^j$ gives us the map
\[
\phi_{j0}\bd x:\kappabar\bd x\to\kappabar\bd x\oplus e_\C
\]
inverse to the unit map $c:\kappabar\bd x\oplus e_\C\to\kappabar\bd x$.
Now our diagram becomes
\[
\xymatrix@C+15pt{
\kappabar F\bd x
\ar[r]^-{\phi_{j0}F\bd x}
\ar[dd]_-{\xi_j}
&\kappabar F\bd x\oplus e_\D
\ar[d]^-{\xi_j\oplus\xi_0}
\\&F\kappabar\bd x\oplus Fe_\C
\ar[d]^-{\xi}
\\F\kappabar\bd x
\ar[r]_-{F\phi_{j0}\bd x}
&F(\kappabar\bd x\oplus e_\C).
}
\]
But we can expand this as follows:
\[
\xymatrix@C+15pt{
\kappabar F\bd x
\ar[r]^-{\phi_{j0}F\bd x}
\ar[dd]_-{\xi_j}
&\kappabar F\bd x\oplus e_\D
\ar[d]_-{\xi_j\oplus1}
\ar[dr]^-{\xi_j\oplus\xi_0}
\\&F\kappabar\bd x\oplus e_\D
\ar[r]_-{1\oplus\xi_0}
&F\kappabar\bd x\oplus Fe_\C
\ar[d]^-{\xi}
\\F\kappabar\bd x
\ar[ur]_-{c^{-1}}
\ar[rr]_-{F\phi_{j0}}
&&F(\kappabar\bd x\oplus e_\C).
}
\]
Remembering that $\phi_{j0}$ coincides with $c^{-1}$, the inverse of the unit isomorphism, the left square commutes
by naturality of $c^{-1}$, and the bottom square by the coherence diagram relating $\eta$ and $\xi$.  The triangle
commutes by inspection, and now the perimeter gives us our desired diagram.  

We also need to consider the special case in which $q=0$ and $r=1$.  In this case, we have the map
\[
\phi_{01}:1=\kappa_1\to\gamma(m;\kappa_0,\kappa_1)=\gamma(m;0,1),
\]
which applies to an object $x\in\C$ to give $x\to e_\C\oplus x$, the inverse of the composite
\[
\xymatrix{
e_\C\oplus x
\ar[r]^-{\tau}
&x\oplus e_\C
\ar[r]^-{c}
&x,
}
\]
since $\gamma(m;0,1)\cdot\tau=\gamma(m\cdot\tau;1,0)$.  Consequently we can write
\[
\phi_{01}x=\tau c^{-1}:x\to e_\C\oplus x.
\]
Further, $\tau c^{-1}$ (or equivalently $c\tau$) satisfies the analogous coherence diagram with $\xi$ that $c$
itself does, namely
\[
\xymatrix{
e_\D\oplus Fx
\ar[r]^-{\eta\oplus1}
\ar[d]_-{c\tau}
&Fe_\C\oplus Fx
\ar[d]^-{\xi}
\\Fx
&F(e_\C\oplus x).
\ar[l]^-{F(c\tau)}
}
\]
This is because we can expand it as follows:
\[
\xymatrix{
e_\D\oplus Fx
\ar[rr]^-{\eta\oplus1}
\ar[d]_-{\tau}
&&Fe_\C\oplus Fx
\ar[dl]_-{\tau}
\ar[d]^-{\xi}
\\Fx\oplus e_\D
\ar[d]_-{c}
\ar[r]^-{1\oplus\eta}
&Fx\oplus Fe_\C
\ar[dr]_-{\xi}
&F(e_\C\oplus x)
\ar[d]^-{F\tau}
\\Fx
&&F(x\oplus e_\C).
\ar[ll]^-{Fc}
}
\]
The top square commutes by naturality of $\tau$, the right ``square'' by the coherence diagram for $\xi$ and $\tau$, and the bottom
square by the coherence diagram for $\eta$ and $\xi$.  The perimeter traces the claimed diagram.

Now the desired diagram in the case $q=0$ and $r=1$ becomes
\[
\xymatrix@C+15pt{
Fx
\ar[r]^-{\tau c^{-1}}
\ar[dd]_-{=}
&e_\D\oplus Fx
\ar[d]^-{\xi_0\oplus\xi_1}
\\&Fe_\C\oplus Fx
\ar[d]^-{\xi}
\\Fx
\ar[r]_-{F(\tau c^{-1})}
&F(e_\C\oplus x).
}
\]
But since $\xi_0=\eta$ and $\xi_1=\id$, this is just a rearrangement of the previous coherence diagram, so it does commute.  
This gives us the special case $q=0$ and $r=1$.

We now proceed by induction on $r$ starting at $r=1$.
If $q=0$, we have just verified the lemma, and if $q\ge1$, then
$\phi_{qr}=\id$, and the diagram simply gives the definition
of $\xi_n$.  

Now assume by induction that $r>1$ and that the diagram commutes with $r$ replaced with $r-1$.  We examine two diagrams that will
need to be pasted together horizontally, and hats always indicate that the last entry is deleted.  The first diagram is
\[
\xymatrix@C+30pt{
\kappabar F\bd x
\ar[r]^-{=}
\ar[ddd]_-{\xi_j}
&\kappabar F\xhat\oplus Fx_j
\ar[r]^-{\phi_{q(r-1)}\oplus 1}
\ar[dd]^-{\xi_{j-1}\oplus1}
&(\kappabar F\bd x'\oplus\kappabar F\bd\xhat'')\oplus Fx_j
\ar[d]^-{(\xi_q\oplus\xi_{r-1})\oplus1}
\\&&(F\kappabar\bd x'\oplus F\kappabar\bd\xhat'')\oplus Fx_j
\ar[d]^-{\xi\oplus1}
\\&F\kappabar\xhat\oplus Fx_j
\ar[r]^-{F\phi_{q(r-1)}\oplus1}
\ar[d]^-{\xi}
&F(\kappabar\bd x'\oplus\kappabar\xhat'')\oplus Fx_j
\ar[d]^-{\xi}
\\F\kappabar\bd x
\ar[r]_-{=}
\ar[d]_-{F\phi_{qr}}
&F(\kappabar\xhat\oplus x_j)
\ar[r]_-{F(\phi_{q(r-1)}\oplus1)}
&F((\kappabar\bd x'\oplus\kappabar\xhat'')\oplus x_j)
\ar[d]^-{F\alpha}
\\F(\kappabar\bd x'\oplus\kappabar\bd x'')
\ar[rr]_-{=}
&&F(\kappabar\bd x'\oplus(\kappabar\xhat''\oplus x_j)).
}
\]
The top left rectangle commutes by the definition of $\xi_j$, the top right rectangle by induction, the middle right
rectangle by naturality of $\xi$, and the bottom rectangle because it is $F$ applied to a diagram induced by a diagram in $Y(j)$,
where all diagrams commute.  The whole diagram therefore commutes.

The second diagram is
\[
\xymatrix{
(\kappabar F\bd x'\oplus\kappabar F\xhat'')\oplus Fx_j
\ar[r]^-{\alpha}
\ar[d]_-{(\xi_{q}\oplus\xi_{r-1})\oplus1}
&\kappabar F\bd x'\oplus(\kappabar F\xhat''\oplus Fx_j)
\ar[r]^-{=}
\ar[d]^-{\xi_q\oplus(\xi_{r-1}\oplus1)}
&\kappabar F\bd x'\oplus\kappabar F\bd x''
\ar[dd]^-{\xi_q\oplus\xi_r}
\\ (F\kappabar\bd x'\oplus F\kappabar\xhat'')\oplus Fx_j
\ar[r]_-{\alpha}
\ar[d]_-{\xi\oplus1}
&F\kappabar\bd x'\oplus(F\kappabar\xhat\oplus Fx_j)
\ar[d]^-{1\oplus\xi}
\\F(\kappabar\bd x'\oplus\kappabar\xhat'')\oplus Fx_j
\ar[d]_-{\xi}
&F\kappabar\bd x'\oplus F(\kappabar\xhat''\oplus x_j)
\ar[r]^-{=}
\ar[d]^-{\xi}
&F\kappabar\bd x'\oplus F\kappabar\bd x''
\ar[d]^-{\xi}
\\F((\kappabar\bd x'\oplus\kappabar\xhat'')\oplus x_j)
\ar[r]^-{F\alpha}
\ar[d]_-{F\alpha}
&F(\kappabar\bd x'\oplus(\kappabar\xhat''\oplus x_j))
\ar[r]^-{=}
&F(\kappabar\bd x'\oplus\kappabar\bd x'')
\ar[d]^-{=}
\\F(\kappabar\bd x'\oplus(\kappabar\xhat''\oplus x_j))
\ar[rr]_-{=}
&&F(\kappabar\bd x'\oplus\kappabar\bd x'').
}
\]
The top left square commutes by naturality of $\alpha$, the top right by definition of $\xi_r$, the middle left by the
coherence diagram for $\alpha$, and the middle right and bottom by inspection.  The total second diagram therefore
also commutes.

The right column of the first diagram coincides with the left column of the second diagram, so we can paste the two
diagrams together along their common column.  When we do so, the counterclockwise direction of the total diagram
coincides with the counterclockwise direction of our desired diagram, but the clockwise direction requires a bit more 
work.  We need the following diagram to commute:
\[
\xymatrix@C+20pt{
\kappabar F\bd x
\ar[r]^-{=}
\ar[d]_-{\phi_{qr}}
&\kappabar F\xhat\oplus Fx_j
\ar[r]^-{\phi_{q(r-1)}\oplus1}
&(\kappabar F\bd x'\oplus\kappabar F\xhat'')\oplus Fx_j
\ar[d]^-{\alpha}
\\\kappabar F\bd x'\oplus\kappabar F\bd x''
\ar[rr]_-{=}
&&\kappabar F\bd x'\oplus(\kappabar F\xhat''\oplus Fx_j).
}
\]
However, all three maps are induced by maps in $Y(j)$, where all diagrams commute.  The desired diagram therefore
does commute.
\end{proof}

We will need a generalization of this lemma, and this requires a bit more notation.

\begin{definition}
Let $\bd x\in\C^j$, and let $j=j_1+\cdots+j_n$, so we can decompose $\bd x$ as
\[
\bd x=\odot_s\bd x_s,
\]
where $\bd x_s\in\C^{j_s}$ for $1\le s\le n$.  Regardless of $\bd x$, we have a unique isomorphism
\[
\kappa_j\to\gamma(\kappa_n;\br{\kappa_{j_s}})
\]
in $Y(j)$.  
Note that if we apply the target element $\gamma(\kappa_n;\br{\kappa_{j_s}})$ to $\bd x=\odot_s\bd x_s$, we can write the result as either
$\kappabar\br{\kappabar\bd x_s}$ or $\kappa_n\br{\kappabar\bd x_s}$, since we have chosen a single object $\kappa_n\in Y(n)$
as the target of the $n$'th map in our defining sequence $\{\kappa_n\}$.  
We will use $\kappa_n\br{\kappabar\bd x_s}$ since we need the index $n$ for induction in the next proof.
So applying the map in $Y(n)$ to
$\bd x=\odot_s\bd x_s$, we obtain a map (which is an isomorphism) which we denote
\[
\phi\br{j_s}:\kappabar(\odot_s \bd x_s)\to\kappa_n\br{\kappabar \bd x_s}.
\]
This is a generalization of the previous $\phi_{qr}$ when $n=2$, $j_1=q$, and $j_2=r$.
\end{definition}

We also generalize lemma \ref{xi1} for these maps, as follows.

\begin{lemma}\label{xi2}
Given $\bd x\in\C^j$ and $j=j_1+\cdots+j_n$, 
note that $\odot_sF\bd x_s=F(\odot_s\bd x_s)$.  Then
decomposing $\bd x$ as $\odot_s\bd x_s$ for $1\le s\le n$, 
the following diagram commutes:
\[
\xymatrix{
\kappabar(\odot_s F\bd x_s)
\ar[r]^-{\phi\br{j_s}}
\ar[dd]_-{\xi_j}
&\kappa_n\br{\kappabar F\bd x_s}
\ar[d]^-{\kappa_n\br{\xi_{j_s}}}
\\&\kappa_n\br{F\kappabar\bd x_s}
\ar[d]^-{\xi_n}
\\F\kappabar(\odot_s\bd x_s)
\ar[r]_-{F\phi\br{j_s}}
&F\kappa_n\br{\kappabar\bd x_s}.
}
\]
\end{lemma}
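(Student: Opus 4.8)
The plan is to induct on $n$, using Lemma \ref{xi1} to split off the last block $\bd x_n$ and the inductive hypothesis to absorb the remaining blocks $\bd x_1,\dots,\bd x_{n-1}$. The cases $n\le 1$ are degenerate: for $n=0$ there are no blocks, $\bd x=*$, and every arrow in the square is an identity except for the two copies of $\xi_0=\eta$, so it commutes; for $n=1$ we have $\kappa_1=1$, hence $\phi\br{j_1}=\id$, $\kappa_1\br{\xi_{j_1}}=\xi_{j_1}$ and $\xi_1=\id$, and the square collapses to a triangle of identities. The case $n=2$ is precisely Lemma \ref{xi1}, with $j_1=q$ and $j_2=r$; it can equally be absorbed into the inductive step below.

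For $n\ge 2$, write $j'=j_1+\dots+j_{n-1}$ and $\bd x'=\odot_{s=1}^{n-1}\bd x_s\in\C^{j'}$, so that $\bd x=\bd x'\odot\bd x_n$. Three elementary facts drive the pasting. First, since $\kappa_n=\gamma(m;\kappa_{n-1},1)$, operad associativity gives $\gamma(\kappa_n;\br{\kappa_{j_s}})=\gamma(m;\gamma(\kappa_{n-1};\br{\kappa_{j_s}}_{s<n}),\kappa_{j_n})$, so that in any target category $\kappa_n\br{z_s}=\kappa_{n-1}\br{z_s}_{s<n}\oplus z_n$ functorially in the $z_s$; in particular $\kappa_n\br{\xi_{j_s}}=\kappa_{n-1}\br{\xi_{j_s}}_{s<n}\oplus\xi_{j_n}$. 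Second, the reassociation $\phi\br{j_s}$ factors, up to the unique isomorphism of $Y(j)$, as $\phi_{j'j_n}$ followed by $\phi\br{j_s}_{s<n}\oplus 1$ --- both are maps between the same pair of objects of $Y(j)$, where all diagrams commute --- and the same factorization holds after applying $F$. Third, by its very definition $\xi_n$ for the $n$-tuple $(\kappabar\bd x_1,\dots,\kappabar\bd x_n)$ equals $\xi\circ(\xi_{n-1}\oplus 1)$, and the $\xi_{n-1}$ appearing here is exactly the map $\xi_{n-1}$ that the inductive hypothesis, applied to $\bd x'=\odot_{s<n}\bd x_s$, produces.

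With these in hand I would assemble the square by pasting together: Lemma \ref{xi1} with $q=j'$ and $r=j_n$, which handles the outer two-fold split and relates $\xi_j$ to $\xi_{j'}\oplus\xi_{j_n}$ across $\phi_{j'j_n}$ and $\xi=\xi_2$; the inductive hypothesis for $\bd x'$, tensored on the right with the identity of $\kappabar F\bd x_n$ (legitimate since $\oplus$ is a bifunctor), which relates $\xi_{j'}\oplus 1$ to $\xi_{n-1}\oplus 1$ across $\phi\br{j_s}_{s<n}\oplus 1$; the interchange identities for $\oplus$, which rewrite $\xi_{j'}\oplus\xi_{j_n}$ as $(1\oplus\xi_{j_n})(\xi_{j'}\oplus 1)$ and similar; and finally the first two facts above, together with the remark that all leftover squares are $F$ applied to diagrams induced from $Y(j)$, hence commute. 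Tracing the two legs around the perimeter and rewriting the endpoints by the three facts identifies them with the clockwise and counterclockwise composites of the claimed diagram. Degenerate blocks ($j_s=0$, including $j'=0$ or $j_n=0$) are handled exactly as the $q=0$ and $r=0$ cases are handled inside the proof of Lemma \ref{xi1}.

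The only real obstacle is bookkeeping: the pasting diagram is sizable, and one must be careful that the interchange moves for $\oplus$, the definitional unfolding of $\xi_n$, and the several $Y(j)$-coherence squares fit together without an index slip, just as in Lemma \ref{xi1} but one layer up. The genuinely load-bearing point is the third fact --- that the $\xi_{n-1}$ baked into the recursive definition of $\xi_n$ is literally the morphism the inductive hypothesis hands back --- since that is what lets the induction close.
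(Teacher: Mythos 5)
Your proposal is correct and follows essentially the same route as the paper: induction on $n$, peeling off the last block via Lemma \ref{xi1} with $q=j'$, $r=j_n$, applying the inductive hypothesis to the first $n-1$ blocks (tensored with the last factor), using the recursive identities $\kappa_n\bd y=\kappa_{n-1}\hat{\bd y}\oplus y_n$ and $\xi_n=\xi\circ(\xi_{n-1}\oplus1)$, and invoking uniqueness of morphisms in $Y(j)$ to identify the composite reassociation with $\phi\br{j_s}$. The paper organizes the same ingredients as two explicitly drawn pasted diagrams, but the argument is the same.
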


\begin{proof}
We proceed by induction on $n$, and the case $n=1$ is trivial: $\phi\br{j}=\id$, $\kappa_1=1$, and $\xi_1=\id$ as well.

As with Lemma \ref{xi1}, the proof consists of verifying two diagrams that need to be pasted together
horizontally.  We let $j'=j-j_n$, and let hats throughout indicate that the last index has been deleted.  
The notation $\phi_{qr}$ is as above from Lemma \ref{xi1}.
Our first
diagram is as follows:
\[
\xymatrix@C+15pt{
\kappabar(\odot_sF\bd x_s)
\ar[r]^-{\phi_{j'j_n}}
\ar[ddd]_-{\xi_j}
&\kappabar(\hat{\odot}_sF\bd x_s)\oplus\kappabar F\bd x_n
\ar[r]^-{\phi\widehat{\br{j_s}}\oplus1}
\ar[dd]^-{\xi_{j'}\oplus\xi_{j_n}}
&\kappa_{n-1}\widehat{\br{\kappabar F\bd x_s}}\oplus\kappabar F\bd x_n
\ar[d]^-{\kappa_{n-1}\widehat{\br{\xi_{j_s}}}\oplus1}
\\&&\kappa_{n-1}\widehat{\br{F\kappabar\bd x_s}}\oplus \kappabar F\bd x_n
\ar[d]^-{\xi_{n-1}\oplus\xi_{j_n}}
\\&F\kappabar(\hat\odot_s\bd x_s)\oplus F\kappabar\bd x_n
\ar[d]^-{\xi}
\ar[r]^-{F\phi\widehat{\br{j_s}}\oplus1}
&F\kappa_{n-1}\widehat{\br{\kappabar\bd x_s}}\oplus F\kappabar\bd x_n
\ar[d]^-{\xi}
\\F\kappabar(\odot_s\bd b_s)
\ar[d]_-{F\phi\br{j_s}}
\ar[r]^-{F\phi_{j'j_n}}
&F(\kappabar(\hat\odot_s\bd x_s)\oplus\kappabar\bd x_n)
\ar[r]^-{F(\phi\widehat{\br{j_s}}\oplus1)}
&F(\kappa_{n-1}\widehat{\br{\kappabar\bd x_s}}\oplus\kappabar\bd x_n).
\\F\kappa_n\br{\kappabar\bd x_s}
\ar[urr]_-{=}
}
\]
The left rectangle commutes since it is an instance of Lemma \ref{xi1}.  The top right rectangle commutes by induction,
and the bottom right rectangle by naturality of $\xi$.  The bottom triangle commutes since it is $F$ applied to a diagram
induced from maps in $Y(j)$, where all diagrams commute.  The total diagram therefore commutes.

The second diagram is as follows:
\[
\xymatrix{
\kappa_{n-1}\widehat{\br{\kappabar F\bd x_s}}\oplus\kappabar F\bd x_n
\ar[rr]^-{=}
\ar[d]_-{\kappa_{n-1}\widehat{\br{\xi_{j_s}}}\oplus1}
\ar[dr]^-{\kappa_{n-1}\widehat{\br{\xi_{j_s}}}\oplus\xi_{j_n}}
&&\kappa_n\br{\kappabar F\bd x_s}
\ar[d]^-{\kappa_n\br{\xi_{j_s}}}
\\\kappa_{n-1}\widehat{\br{F\kappabar\bd x_s}}\oplus\kappabar F\bd x_n
\ar[r]^-{1\oplus\xi_{j_n}}
\ar[d]_-{\xi_{n-1}\oplus\xi_{j_n}}
&\kappa_{n-1}\widehat{\br{F\kappabar\bd x_s}}\oplus F\kappabar\bd x_n
\ar[r]^-{=}
\ar[dl]^-{\xi_{n-1}\oplus1}
&\kappa_n\br{F\kappabar\bd x_s}
\ar[dd]^-{\xi_n}
\\F\kappa_{n-1}\widehat{\br{\kappabar\bd x_s}}\oplus F\kappabar\bd x_n
\ar[d]_-{\xi}
\\F(\kappa_{n-1}\widehat{\br{\kappabar\bd x_s}}\oplus \kappabar\bd x_n)
\ar[rr]_-{=}
&&F\kappa_n\br{\kappabar\bd x_s}.
}
\]
The two triangles commute by inspection.  The top (distorted) rectangle commutes since $\kappa_n\bd y=\kappa_{n-1}\hat{\bd y}\oplus y_n$,
where in this case $\bd y=\br{\kappabar F\bd x_s}$,
and the bottom part commutes by definition of $\xi_n$.  The total diagram therefore commutes.

Now we paste the two diagrams together along their common column, the right column of the first diagram and the left column of
the second diagram, and see that the 
counterclockwise direction of the total diagram coincides with the counterclockwise direction of our desired diagram.  
For the clockwise direction, again we need to do a bit more work.  We
observe that since the top row of the total diagram is induced by maps in $Y(n)$, where all diagrams commute, it does
coincide with the desired clockwise direction, and the desired diagram therefore does commute.  
\end{proof}

We are now ready to verify that the multifunctor $F$ preserves composition.  
Let $\bd x_s\in\C^{j_s}$ for $1\le s\le n$, $\bd y\in\C^n$, and $z\in\C$.
We claim:

\begin{proposition}
The multifunctor $F$ preserves composition, meaning the diagram
\[
\xymatrix{
U_\kappa\C(\bd y;z)\times\prod_{s=1}^n U_\kappa\C(\bd x_s,y_s)
\ar[d]_-{\Gamma}
\ar[r]^-{F}
&U_\kappa\D(F\bd y,Fz)\times\prod_{s=1}^n U_\kappa\D(F\bd x_s,Fy_s)
\ar[d]^-{\Gamma}
\\U_\kappa\C(\odot_s\bd x_s,z)
\ar[r]_-{F}
&U_\kappa\D(\odot_s F\bd x_s,Fz)
}
\]
commutes.
\end{proposition}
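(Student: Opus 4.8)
The plan is to expand both legs of the square using the explicit description of the multiproduct $\Gamma$ from Section~\ref{structure} and the definition of $F_n$ as $\D(\xi_n,1)\circ F$, and then to identify the resulting equation as a combination of the naturality of the $\xi_n$ and Lemma~\ref{xi2}. First I would record an easy fact used below: each $\xi_n$ is a natural transformation between the functors $\C^n\to\D$ given by $\bd w\mapsto\kappabar F\bd w$ and $\bd w\mapsto F\kappabar\bd w$. This follows by the same induction used to define the $\xi_n$: $\xi_0=\eta$ is natural vacuously, $\xi_1=\id$ is the identity transformation, and for $n\ge2$ the formula $\xi_n=\xi\circ(\xi_{n-1}\oplus 1)$ exhibits $\xi_n$ as a composite of natural transformations (naturality of $\xi$ being part of the lax structure).

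Next I would trace a typical element $(f,\br{g_s})$, with $f\in\C(\kappabar\bd y,z)$ and $g_s\in\C(\kappabar\bd x_s,y_s)$, around the square (the degenerate case $n=0$ being immediate). Going down and then right, $\Gamma$ in $\C$ produces $f\circ\kappa_n\br{g_s}\circ\phi\br{j_s}\in\C(\kappabar(\odot_s\bd x_s),z)$; here one observes that the isomorphism written $\phi(\bd y,\br{\bd x_s})$ in Section~\ref{structure} is, in the present constant setting, exactly the map $\phi\br{j_s}$ introduced before Lemma~\ref{xi2}, since both are the unique isomorphism of $Y(j)$ with the prescribed source and target applied to $\odot_s\bd x_s$. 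Applying $F_j=\D(\xi_j,1)\circ F$ and expanding by functoriality of $F$ then yields
\[
F(f)\circ F(\kappa_n\br{g_s})\circ F(\phi\br{j_s})\circ\xi_j.
\]
Going right and then down, $F_n$ sends $f$ to $F(f)\circ\xi_n$ and each $F_{j_s}$ sends $g_s$ to $F(g_s)\circ\xi_{j_s}$, and $\Gamma$ in $\D$ then produces $F(f)\circ\xi_n\circ\kappa_n\br{F(g_s)\circ\xi_{j_s}}\circ\phi\br{j_s}$, which by functoriality of the functor $\D^n\to\D$ induced by $\kappa_n$ equals
\[
F(f)\circ\xi_n\circ\kappa_n\br{F(g_s)}\circ\kappa_n\br{\xi_{j_s}}\circ\phi\br{j_s}.
\]

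Comparing the two expressions, the claim reduces to
\[
F(\kappa_n\br{g_s})\circ F(\phi\br{j_s})\circ\xi_j=\xi_n\circ\kappa_n\br{F(g_s)}\circ\kappa_n\br{\xi_{j_s}}\circ\phi\br{j_s},
\]
where the $\xi_n$ on the right-hand side is the component at the tuple $\br{\kappabar\bd x_s}\in\C^n$. I would apply naturality of $\xi_n$ at the morphism $\br{g_s}\colon\br{\kappabar\bd x_s}\to\bd y$ of $\C^n$ to rewrite $\xi_n\circ\kappa_n\br{F(g_s)}$ as $F(\kappa_n\br{g_s})\circ\xi_n$ (now the component at $\br{\kappabar\bd x_s}$), which cancels the outer factors and leaves
\[
F(\phi\br{j_s})\circ\xi_j=\xi_n\circ\kappa_n\br{\xi_{j_s}}\circ\phi\br{j_s};
\]
this is precisely the commutativity asserted by Lemma~\ref{xi2}, and the proof is complete. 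The substantive content has already been absorbed into Lemmas~\ref{xi1} and~\ref{xi2}, so I do not anticipate a real obstacle; the only thing to watch is the bookkeeping of which tuple indexes each occurrence of $\xi_n$ and of the $\phi$-maps. If one prefers to avoid the element-chase, the same content can be packaged as a single pasting diagram whose cells are the definitions of $F_j$ and of the two multiproducts $\Gamma$, functoriality of $F$ and of the $\kappa_n$-induced functor on $\D^n$, naturality of $\xi_n$, and Lemma~\ref{xi2}.
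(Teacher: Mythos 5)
Your proposal is correct and takes essentially the same route as the paper: after expanding both legs via the definition of $\Gamma$ and of $F_n=\D(\xi_n,1)\circ F$, everything reduces to functoriality of $F$ and of the $\kappa_n$-induced functor, naturality of $\xi_n$, and Lemma \ref{xi2}, exactly the ingredients the paper uses. The paper organizes these as three pasted diagrams while you run the same identifications as an element chase (noting, as you do, that $\phi(\bd y,\br{\bd x_s})$ coincides with $\phi\br{j_s}$ in the constant setting), but the content is identical.
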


\begin{proof}
We proceed by a sequence of three diagrams which can be pasted together horizontally, resulting in a total diagram
that gives our desired diagram, unpacked using the definitions of our multicategory composition $\Gamma$ and the multifunctor $F$.
Note that $\kappa_n\bd y=\kappabar\bd y$ because of our convention that $\kappa_n$ is a constant function.
The first diagram is as follows:
\[
\xymatrix{
&\C(\kappabar\bd y,z)\times\C^n(\br{\kappabar\bd x_s},\bd y)
\ar[d]^-{1\times\kappa_n}
\\&\C(\kappabar\bd y,z)\times\C(\kappa_n\br{\kappabar\bd x_s},\kappabar\bd y)
\ar[dl]_-{\circ}
\ar[d]^-{F}
\\\C(\kappa_n\br{\kappabar\bd x_s},z)
\ar[dr]_-{F}
\ar[dd]_-{\C(\phi\br{j_s},1)}
&\D(F\kappabar\bd y,Fz)\times\D(F\kappa_n\br{\kappabar\bd x_s},F\kappabar\bd y)
\ar[d]^-{\circ}
\\&\D(F\kappa_n\br{\kappabar\bd x_s},Fz)
\ar[d]^-{\D(F\phi\br{j_s},1)}
\\\C(\kappabar(\odot_s\bd x_s),z)
\ar[r]_-{F}
&\D(F\kappabar(\odot_s\bd x_s),Fz).
}
\]
Both parts of the diagram commute because $F$ is a functor.

The second diagram, which pastes to the right of the first one, is as follows:
\[
\xymatrix@C+15pt{
\C(\kappabar\bd y,z)\times\C^n(\br{\kappabar\bd x_s},\bd y)
\ar[d]_-{1\times\kappa_n}
\ar[r]^-{F}
&\D(F\kappabar\bd y,Fz)\times\D^n(\br{F\kappabar\bd x_s},F\bd y)
\ar[d]^-{1\times\kappa_n}
\\\C(\kappabar\bd y,z)\times\C(\kappa_n\br{\kappabar\bd x_s},\kappabar\bd y)
\ar[d]_-{F}
&\D(F\kappabar\bd y,Fz)\times\D(\kappa_n\br{F\kappabar\bd x_s},\kappabar F\bd y)
\ar[d]^-{1\times\D(1,\xi_n)}
\\\D(F\kappabar\bd y,Fz)\times\D(F\kappa_n\br{\kappabar\bd x_s},F\kappabar\bd y)
\ar[d]_-{\circ}
\ar[r]^-{1\times\D(\xi_n,1)}
&\D(F\kappabar\bd y,Fz)\times\D(\kappa_n\br{F\kappabar\bd x_s},F\kappabar\bd y)
\ar[d]^-{\circ}
\\\D(F\kappa_n\br{\kappabar\bd x_s},Fz)
\ar[dd]_-{\D(F\phi\br{j_s},1)}
\ar[r]_-{\D(\xi_n,1)}
&\D(\kappa_n\br{F\kappabar\bd x_s},Fz)
\ar[d]^-{\D(\kappa_n\br{\xi_{j_s}},1)}
\\&\D(\kappa_n\br{\kappabar F\bd x_s},Fz)
\ar[d]^-{\D(\phi\br{j_s},1)}
\\\D(F\kappabar(\odot_s\bd x_s),Fz)
\ar[r]_-{\D(\xi_j,1)}
&\D(\kappabar(\odot_sF\bd x_s),Fz).
}
\]
The top hexagon 
is the product of two separate hexagons, and the first factor
commutes, since both directions are just $F$.  
For the second factor, we trace
a typical element 
$\br{f_s}\in\C^n(\br{\kappabar\bd x_s},\bd y)$ through the hexagon, and find that it commutes precisely when
\[
F\kappa_n\br{f_s}\circ\xi_n=\xi_n\circ\kappa_n\br{Ff_s},
\]
which is to say that the following diagram commutes:
\[
\xymatrix@C+10pt{
\kappa_n\br{F\kappabar\bd x_s}
\ar[r]^-{\kappa_n\br{Ff_s}}
\ar[d]_-{\xi_n}
&\kappa_nF\bd y
\ar[d]^-{\xi_n}
\\F\kappa_n\br{\kappabar\bd x_s}
\ar[r]_-{F\kappa_n\br{f_s}}
&F\kappabar\bd y.
}
\]
But this is just naturality of $\xi_n$, so the top hexagon does commute.

The middle rectangle commutes by inspection.

The bottom part of the diagram is $\D(\underline{\phantom m},1)$ applied to the diagram
\[
\xymatrix{
\kappabar(\odot_sF\bd x_s)
\ar[r]^-{\xi_j}
\ar[d]_-{\phi\br{j_s}}
&F\kappabar(\odot_s\bd x_s)
\ar[dd]^-{F\phi\br{j_s}}
\\\kappa_n\br{\kappabar F\bd x_s}
\ar[d]_-{\kappa_n\br{j_s}}
\\\kappa_n\br{F\kappabar\bd x_s}
\ar[r]_-{\xi_n}
&F\kappa_n\br{\kappabar\bd x_s},
}
\]
which is an instance of Lemma \ref{xi2}.  The second total diagram therefore commutes.

The third diagram we need to paste onto the second one is as follows:
\[
\xymatrix@C-85pt{
\D(F\kappabar\bd y,Fz)\times\D^n(\br{F\kappabar\bd x_s},F\bd y)
\ar[d]_-{1\times\kappa_n}
\ar[rr]^-{\D(\xi_n,1)\times\D^n(\br{\xi_{j_s}},1)}
&&\D(\kappabar F\bd y,Fz)\times\D^n(\br{\kappabar F\bd x_s},F\bd y)
\ar[d]^-{1\times\kappa_n}
\\\D(F\kappabar\bd y,Fz)\times\D(\kappa_n\br{F\kappabar\bd x_s},\kappabar F\bd y)
\ar[dd]_-{1\times\D(1,\xi_n)}
\ar[dr]^-{\D(\xi_n,1)\times1}
&&\D(\kappabar F\bd y,Fz)\times\D(\kappa_n\br{\kappabar F\bd x_s},\kappabar F\bd y)
\ar[dd]^-{\circ}
\\&\D(\kappabar F\bd y, Fz)\times\D(\kappa_n\br{F\kappabar\bd x_s},\kappabar F\bd y)
\ar[dd]^-{\circ}
\ar[ur]^-{1\times\D(\kappa_n\br{\xi_{j_s}},1)\,\,\,\,\,\,\,}
\\\D(F\kappabar\bd y,Fz)\times\D(\kappa_n\br{F\kappabar\bd x_s},F\kappabar\bd y)
\ar[dr]_-{\circ}
&&\D(\kappa_n\br{\kappabar F\bd x_s},Fz)
\ar[dd]^-{\D(\phi\br{j_s},1)}
\\&\D(\kappa_n\br{F\kappabar\bd x_s},Fz)
\ar[ur]_-{\,\,\,\,\,\,\D(\kappa_n\br{\xi_{j_s}},1)}
\\&&\D(\kappabar(\odot_sF\bd x_s),Fz).
}
\]
The counterclockwise direction traces the right hand column of the second diagram, so this diagram can be
pasted to it.  The top pentagon commutes by naturality of $\kappa_n$, the left square by naturality of $\xi_n$,
and the right square by inspection.  Now examining the perimeter of the total pasted diagram, we see that the
counterclockwise direction gives us the expansion of $F\circ\Gamma$ in our original claimed diagram,
and the clockwise direction gives us the expansion of $\Gamma\circ F$.  The diagram therefore commutes,
and $F$ preserves composition.
\end{proof}

Our final step in showing that $F$ is a multifunctor is preservation of the $\Sigma_n$-actions.  This is
the content of the following proposition:

\begin{proposition}
Let $\bd x\in\C^n$, $y\in\C$, and $\sigma\in\Sigma_n$.  Then the following diagram commutes:
\[
\xymatrix{
U_\kappa\C(\bd x;y)
\ar[r]^-{F}
\ar[d]_-{\sigma^*}
&U_\kappa\D(F\bd x;Fy)
\ar[d]^-{\sigma^*}
\\U_\kappa\C(\sigma^{-1}\bd x;y)
\ar[r]_-{F}
&U_\kappa\D(\sigma^{-1}F\bd x;Fy).
}
\]
\end{proposition}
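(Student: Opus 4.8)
The plan is to unwind the two equivariance maps and the two multifunctor maps until only the generalized structure maps $\xi_n$ are left, and then to prove the resulting square by peeling it down to the coherence of $\xi$. Recall from Section~\ref{structure} that the action of $\sigma$ on $U_\kappa\C$ is $\C(\theta(\bd x,\sigma),1)$, where $\theta(\bd x,\sigma)\colon\kappabar(\sigma^{-1}\bd x)\to\kappabar\bd x$ is obtained by applying to $\sigma^{-1}\bd x$ the unique morphism $\kappa_n\to\kappa_n\cdot\sigma$ of $Y(n)$. Since that morphism of $Y(n)$ depends only on $\sigma$, the action of $\sigma$ on $U_\kappa\D$ is induced by the very same morphism, evaluated instead at $\sigma^{-1}F\bd x$; and $F_n$ sends $f\in\C(\kappabar\bd x,y)$ to $F(f)\circ\xi_n$. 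Tracing an element $f$ around the square in the two ways, each route takes $f$ to $F(f)$ composed with one of the two directed paths across the square
\[
\xymatrix@C+20pt{
\kappabar(\sigma^{-1}F\bd x)
\ar[r]^-{\xi_n}
\ar[d]_-{\theta(F\bd x,\sigma)}
&F\kappabar(\sigma^{-1}\bd x)
\ar[d]^-{F\theta(\bd x,\sigma)}
\\\kappabar F\bd x
\ar[r]_-{\xi_n}
&F\kappabar\bd x,
}
\]
in which the top $\xi_n$ is the one attached to $\sigma^{-1}\bd x$ and the bottom one to $\bd x$. So it suffices to show this square commutes; it is exactly the statement that the maps $\xi_n$ are $\Sigma_n$-equivariant.

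I would then reduce this square to the case of an adjacent transposition, and finally to the binary case. The $\Sigma_n$-actions on $U_\kappa\C$ and $U_\kappa\D$ are genuine right actions --- part of Theorem~\ref{formsamulticat} --- so the square for a product $\tau\rho$ is obtained by vertically stacking the square for $\tau$ at $\bd x$ with the square for $\rho$ at $\tau^{-1}\bd x$; since the trivial permutation gives a trivially commuting square, an induction on the word length of $\sigma$ in the adjacent transpositions $s_i=(i,i+1)$ reduces us to $\sigma=s_i$. For such a $\sigma$, applying Lemma~\ref{xi1} with $(q,r)=(i+1,\,n-i-1)$ to both $\bd x$ and $\sigma^{-1}\bd x$ rewrites each $\xi_n$ in the square using $\xi_{i+1}$, $\xi_{n-i-1}$, $\xi$, and the decomposition isomorphisms $\phi_{qr}$; since $s_i$ fixes the last $n-i-1$ entries, the $\xi_{n-i-1}$ factor rides along by naturality of $\xi$ and the square collapses to the same square for $s_i$ acting on $(x_1,\dots,x_{i+1})$ --- i.e.\ to the case in which the transposed pair is at the end. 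A second application of Lemma~\ref{xi1}, now with $(q,r)=(i-1,2)$, strips off the first $i-1$ entries and reduces to the case $n=2$, $\sigma=(1\,2)$: there $\xi_2=\xi$, $\theta(\bd x,\sigma)$ is the commutativity isomorphism, and the square is, after the harmless relabelling already used in Section~\ref{lax}, exactly the transposition coherence diagram satisfied by $\xi$. Every other commutativity introduced along the way is an instance of the naturality of $\xi$ or a diagram induced from $Y$, where all diagrams commute.

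The only genuine difficulty is organizational: as in Lemmas~\ref{xi1} and~\ref{xi2}, all of the conceptual content lies in the three coherence diagrams for $(\eta,\xi)$ together with the fact that every diagram in $Y$ commutes, and the work is in choosing the decompositions so that an arbitrary $\sigma$ is whittled down first to an adjacent transposition and then to the binary case. An alternative, slightly more abstract route would be to first establish a lemma in the style of Lemmas~\ref{xi1}--\ref{xi2} asserting that for every morphism $g\colon\kappa_n\to v$ of $Y(n)$ the square relating $\xi_n$, the induced functor of $g$, and the structure map attached to $v$ commutes, and then to specialize to $g=\theta(\bd x,\sigma)$; but that merely repackages the same induction.
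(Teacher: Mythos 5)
Your proposal is correct, and its first move is exactly the paper's: trace $f$ through both composites to reduce the proposition to the single square in $\D$ asserting that $\xi_n$ intertwines the maps induced by the unique $Y(n)$-morphism $\kappa_n\to\kappa_n\cdot\sigma$ (i.e.\ $\xi_n\circ\theta(F\bd x,\sigma)=F\theta(\bd x,\sigma)\circ\xi_n$). Where you diverge is in how that square is verified. The paper runs an induction on $n$: for $\sigma=\hat\sigma\oplus1$ it uses $\xi_n=\xi\circ(\xi_{n-1}\oplus1)$, the inductive hypothesis, and naturality of $\xi$; for the remaining adjacent transposition $(n-1,n)$ it writes $\theta$ as $\alpha^{-1}\circ(1\oplus\tau)\circ\alpha$ and does an explicit two-diagram pasting using the $\alpha$--$\xi$ coherence, the $\tau$--$\xi$ coherence, and naturality. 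You instead make explicit the word-length/stacking reduction to adjacent transpositions (which the paper invokes only implicitly, and which is legitimate since $\sigma^*$ is a genuine right action on both sides), and then dispose of each adjacent transposition by two applications of Lemma \ref{xi1} --- first with $(q,r)=(i+1,n-i-1)$ so the fixed tail rides along by naturality of $\xi$, then with $(q,r)=(i-1,2)$ to isolate the transposed pair as a $\oplus$-block --- landing directly on the binary transposition coherence diagram for $\xi$, with all conjugating isomorphisms commuting because they are induced from $Y$. This is a genuine streamlining rather than a repackaging of the same computation: the $\alpha$-manipulation that the paper performs by hand is absorbed into Lemma \ref{xi1}, which is already available at this point, so your route trades the paper's self-contained explicit pasting for a shorter argument that reuses proven block-decomposition lemmas; both ultimately rest on the same inputs (naturality of $\xi$, the $\tau$--$\xi$ coherence, the $\alpha$--$\xi$ coherence hidden in Lemma \ref{xi1}, and the fact that all diagrams in $Y$ commute).
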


\begin{proof}
The map $\sigma^*$ is induced by the map in $Y(n)$
\[
\theta(\bd x,\sigma):\kappa_n(\sigma^{-1}\bd x)\to\kappa_n\bd x\cdot\sigma,
\]
but since we are assuming that the map $\kappa_n:\Ob\C^n\to\Ob Y(n)$ is constant,
we will just abbreviate
this to
\[
\theta:\kappa_n\to\kappa_n\cdot\sigma.
\]
Now expanding the desired diagram using the definitions, we find that we wish to verify commutativity of
the following diagram:
\[
\xymatrix{
\C(\kappabar\bd x,y)
\ar[r]^-{F}
\ar[d]_-{=}
&\D(F\kappabar\bd x,Fy)
\ar[r]^-{\D(\xi_n,1)}
\ar[d]^-{=}
&\D(\kappabar F\bd x,Fy)
\ar[d]^-{=}
\\\C((\kappa_n\cdot\sigma)(\sigma^{-1}\bd x),y)
\ar[r]^-{F}
\ar[d]_-{\C(\theta,1)}
&\D(F(\kappa_n\cdot\sigma)(\sigma^{-1}\bd x),Fy)
\ar[r]^-{\D(\xi_n,1)}
\ar[d]^-{\D(F\theta,1)}
&\D((\kappa_n\cdot\sigma)(\sigma^{-1}F\bd x),Fy)
\ar[d]^-{\D(\theta,1)}
\\\C(\kappa_n\sigma^{-1}\bd x,y)
\ar[r]_-{F}
&\D(F\kappa_n\sigma^{-1}\bd x,Fy)
\ar[r]_-{D(\xi_n,1)}
&\D(\kappa_n\sigma^{-1}F\bd x,Fy).
}
\]
The top two squares commute by inspection, and the lower left one by functoriality of $F$.  This reduces the
argument to verifying the bottom right square, which is in turn induced from a square in $\D$
as follows:
\[
\xymatrix{
\kappa_n\sigma^{-1}F\bd x
\ar[r]^-{\xi_n}
\ar[d]_-{\theta}
&F\kappa_n\sigma^{-1}\bd x
\ar[d]^-{F\theta}
\\\kappa_n\cdot\sigma\sigma^{-1}F\bd x
\ar[d]_-{=}
&F\kappa_n\cdot\sigma\sigma^{-1}\bd x
\ar[d]^-{=}
\\\kappa_nF\bd x
\ar[r]_-{\xi_n}
&F\kappa_n\bd x.
}
\]

We need to show that this diagram commutes, and we proceed by induction on $n$, starting with $n=1$, in which case there
are no non-trivial permutations, so the diagram does commute.  

Next, we assume the diagram does commute with $n$ replaced by $n-1$, and first consider the case in which $\sigma=\hat\sigma\oplus1$
for $\hat\sigma\in\Sigma_{n-1}$.  
We write
\[
\hat\theta:\kappa_{n-1}\to\kappa_{n-1}\cdot\hat\sigma
\]
for the unique map in $Y(n-1)$.
Then the diagram
\[
\xymatrix{
\kappa_n=\gamma(m;\kappa_{n-1},1)
\ar[r]^-{\theta}
\ar[dr]_-{\gamma(m;\hat\theta,1)}
&\gamma(m;\kappa_{n-1},1)\cdot\sigma
\ar[d]^-{=}
\\&\gamma(m;\kappa_{n-1}\cdot\hat\sigma,1)
}
\]
in $Y(n)$ commutes, since all diagrams commute in $Y(n)$.  So if we apply this diagram to an object $\bd x$, we have
the diagram
\[
\xymatrix{
\kappa_n\bd x=\kappa_{n-1}\xhat\oplus x_n
\ar[r]^-{\theta}
\ar[dr]_-{\hat\theta\oplus1}
&\kappa_n\cdot\sigma\bd x
=\kappa_n(\hat\sigma\xhat\oplus x_n)
\ar[d]^-{=}
\\&\kappa_{n-1}\hat\sigma\xhat\oplus x_n,
}
\]
which tells us that in this case, we have $\theta=\hat\theta\oplus1$.

Now we can expand our desired diagram as follows:
\[
\xymatrix@C-5pt{
\kappa_n\sigma^{-1}F\bd x=\kappa_{n-1}\hat\sigma^{-1}F\xhat\oplus Fx_n
\ar[r]^-{\xi_{n-1}\oplus1}
\ar[d]_-{\theta=\hat\theta\oplus1}
&F\kappa_{n-1}\hat\sigma^{-1}\xhat\oplus Fx_n
\ar[r]^-{\xi}
\ar[d]^-{F\hat\theta\oplus1}
&F(\kappa_{n-1}\hat\sigma^{-1}\xhat\oplus x_n)
\ar[d]^-{F(\hat\theta\oplus1)}
\\\kappa_n\sigma\sigma^{-1}F\bd x=\kappa_{n-1}\hat\sigma\hat\sigma^{-1}F\xhat\oplus Fx_n
\ar[r]^-{\xi_{n-1}\oplus1}
\ar[d]_-{=}
&F\kappa_{n-1}\hat\sigma\hat\sigma^{-1}\xhat\oplus Fx_n
\ar[r]^-{\xi}
\ar[d]^-{=}
&F(\kappa_{n-1}\hat\sigma\hat\sigma^{-1}\xhat\oplus x_n)
\ar[d]^-{=}
\\\kappa_nF\bd x=\kappa_{n-1}F\xhat\oplus Fx_n
\ar[r]_-{\xi_{n-1}\oplus1}
&F\kappa_{n-1}\xhat\oplus Fx_n
\ar[r]_-{\xi}
&F\kappa_n\bd x.
}
\]
Since $\xi_n=\xi\circ(\xi_{n-1}\oplus1)$ and $\theta=\hat\theta\oplus1$, the perimeter of the diagram does give our desired diagram.
The left part of the diagram commutes by induction, and the right part by naturality of $\xi$.  This concludes the verification of
preservation of the action of $\sigma$ when $\sigma=\hat\sigma\oplus1$.

The remaining case is when $\sigma$ moves the index $n$, but since all elements of $\Sigma_n$ can be written as a product
of transpositions of adjacent indices, we can restrict our attention to such transpositions, and the only one that moves the last index
is the transposition of $n$ and $n-1$.  So we let $\sigma$ be this transposition, and the argument will
be done when we've verified the square for this particular permutation, which satisfies $\sigma=\sigma^{-1}$.  

We first look at the inducing map $\theta:\kappa_n\to\kappa_n\cdot\sigma$, and note that applied to an object $\bd x\in\C^n$, we
have
\[
\sigma\bd x=(x_1,\dots,x_{n-2},x_n,x_{n-1}).
\]
Therefore
\begin{gather*}
\kappa_n\sigma\bd x=\gamma(m;\kappa_{n-1},1)\sigma\bd x
=\kappa_{n-1}(x_1,\dots,x_{n-2},x_n)\oplus x_{n-1}
\\=(\kappa_{n-2}(x_1,\dots,x_{n-2})\oplus x_n)\oplus x_{n-1}.
\end{gather*}
We've already introduced the notation $\xhat$ for $\bd x$ with its last entry removed; now we also need $\xtilde$ for
$\bd x$ with its last two entries removed.  Then what the last calculation shows is that
\[
\kappa_n\sigma\bd x=(\kappa_{n-2}\xtilde\oplus x_n)\oplus x_{n-1},
\]
and we can also expand
\[
\kappa_n\bd x=(\kappa_{n-2}\xtilde\oplus x_{n-1})\oplus x_n.
\]
We can now construct the map $\theta:\kappa_n\bd x\to\kappa_n\sigma\bd x$ as the composite
\[
\xymatrix{
(\kappa_{n-2}\xtilde\oplus x_{n-1})\oplus x_n
\ar[d]^-{\alpha}
\\\kappa_{n-2}\xtilde\oplus(x_{n-1}\oplus x_n)
\ar[d]^-{1\oplus\tau}
\\\kappa_{n-2}\xtilde\oplus(x_n\oplus x_{n-1})
\ar[d]^-{\alpha^{-1}}
\\ (\kappa_{n-2}\xtilde\oplus x_n)\oplus x_{n-1}.
}
\]
Since both $\theta$ and this composite are induced from maps in $Y(n)$, where all maps are unique, this does coincide with $\theta$.

Next, we observe that $\xi_n:\kappa_nF\bd x\to F\kappa_n\bd x$, which expands by definition to
\[
\xymatrix{
\kappa_nF\bd x=(\kappa_{n-1}F\xhat)\oplus Fx_n
\ar[r]^-{\xi_{n-1}\oplus1}
&(F\kappa_{n-1}\xhat)\oplus Fx_n
\ar[r]^-{\xi}
&F(\kappa_{n-1}\xhat\oplus x_n)=F\kappa_n\bd x,
}
\]
expands even further as follows, since $\xi_{n-1}=\xi\circ(\xi_{n-2}\oplus1)$:
\[
\xymatrix{
\kappa_nF\bd x
\ar[d]^-{=}
\\ (\kappa_{n-2}F\xtilde\oplus Fx_{n-1})\oplus Fx_n
\ar[d]^-{(\xi_{n-2}\oplus1)\oplus1}
\\ (F\kappa_{n-2}\xtilde\oplus Fx_{n-1})\oplus Fx_n
\ar[d]^-{\xi\oplus1}
\\F(\kappa_{n-2}\xtilde\oplus x_{n-1})\oplus Fx_n
\ar[d]^-{\xi}
\\F((\kappa_{n-2}\xtilde\oplus x_{n-1})\oplus x_n)
\ar[d]^-{=}
\\F\kappa_n\bd x.
}
\]

We can now fill in the diagram by pasting together two pieces horizontally; the first 
piece is as follows, using the $(\xi_{n-2}\oplus1)\oplus1$ part of the above composition:
\[
\xymatrix@C+30pt{
(\kappa_{n-2}F\xtilde\oplus Fx_n)\oplus Fx_{n-1}
\ar[r]^-{(\xi_{n-2}\oplus1)\oplus1}
\ar[d]_-{\alpha}
&(F\kappa_{n-2}\xtilde\oplus Fx_n)\oplus Fx_{n-1}
\ar[d]^-{\alpha}
\\\kappa_{n-2}F\xtilde\oplus(Fx_n\oplus Fx_{n-1})
\ar[r]^-{\xi_{n-2}\oplus1}
\ar[d]_-{1\oplus\tau}
&F\kappa_{n-2}\xtilde\oplus(Fx_n\oplus Fx_{n-1})
\ar[d]^-{1\oplus\tau}
\\\kappa_{n-2}F\xtilde\oplus(Fx_{n-1}\oplus Fx_n)
\ar[r]^-{\xi_{n-2}\oplus1}
\ar[d]_-{\alpha^{-1}}
&F\kappa_{n-2}\xtilde\oplus(Fx_{n-1}\oplus Fx_n)
\ar[d]^-{\alpha^{-1}}
\\ (\kappa_{n-2}F\xtilde\oplus Fx_{n-1})\oplus Fx_n
\ar[r]_-{(\xi_{n-2}\oplus1)\oplus1}
&(F\kappa_{n-2}\xtilde\oplus Fx_{n-1})\oplus Fx_n.
}
\]
The top and bottom rectangles commute by naturality of $\alpha$, and the middle one by inspection.

The second piece, to be pasted horizontally to the first one, is
\[
\xymatrix@C+10pt{
(F\kappa_{n-2}\xtilde\oplus Fx_{n-1})\oplus Fx_n
\ar[r]^-{\xi\circ(\xi\oplus1)}
\ar[d]_-{\alpha}
&F((\kappa_{n-2}\xtilde\oplus x_{n-1})\oplus x_n)
\ar[d]^-{F\alpha}
\\F\kappa_{n-2}\xtilde\oplus(Fx_{n-1}\oplus Fx_n)
\ar[d]_-{1\oplus\tau}
\ar[r]^-{\xi\circ(1\oplus\xi)}
&F(\kappa_{n-2}\xtilde\oplus(x_{n-1}\oplus x_n))
\ar[d]^-{F(1\oplus\tau)}
\\F\kappa_{n-2}\xtilde\oplus(Fx_n\oplus Fx_{n-1})
\ar[d]_-{\alpha^{-1}}
\ar[r]^-{\xi\circ(1\oplus\xi)}
&F(\kappa_{n-2}\xtilde\oplus(x_n\oplus x_{n-1}))
\ar[d]^-{F\alpha^{-1}}
\\ (F\kappa_{n-2}\xtilde\oplus Fx_n)\oplus Fx_{n-1}
\ar[r]_-{\xi\circ(\xi\oplus1)}
&F((\kappa_{n-2}\xtilde\oplus x_n)\oplus x_{n-1}).
}
\]
The perimeter of the total pasted diagram gives our desired diagram, so it just remains to verify commutativity
of this second piece of the pasting.  The top and bottom rectangles both commute by the coherence diagram
for $\alpha$ and $\xi$, so this just leaves the middle rectangle.  But using generic variables to save space, this
can be expanded to
\[
\xymatrix{
Fa\oplus(Fb\oplus Fc)
\ar[r]^-{1\oplus\xi}
\ar[d]_-{1\oplus \tau}
&Fa\oplus F(b\oplus c)
\ar[r]^-{\xi}
\ar[d]^-{1\oplus F\tau}
&F(a\oplus(b\oplus c))
\ar[d]^-{F(1\oplus\tau)}
\\Fa\oplus(Fc\oplus Fb)
\ar[r]_-{1\oplus\xi}
&Fa\oplus F(c\oplus b)
\ar[r]_-{\xi}
&F(a\oplus(c\oplus b)).
}
\]
However, the left square commutes by the transposition coherence diagram for $\xi$,
and the right square by naturality of $\xi$.
This completes the verification, and so $F$ does
define a multifunctor.
\end{proof}

\section{The Weak Left Adjoint}\label{ladj}

This section is devoted to the proof of Theorem \ref{ladjtheorem}.
As a result of Theorem \ref{laxtheorem}, we have a well-defined underlying multicategory functor 
\[
U:\Sym\to\Mult
\]
with source symmetric monoidal categories and lax monoidal functors, and target multicategories and multifunctors.
It is natural to ask for a left adjoint to this construction, and we almost have one: there is a functor
\[
L:\Mult\to\Sym
\]
with maps $\eta:\id\to UL$ and $\varepsilon:LU\to\id$ satisfying the triangle identities for the unit and counit
of an adjunction.  The only problem is that while $\eta$ is natural, $\varepsilon$ is only lax natural: its naturality squares
only commute up to a natural map of their own that satisfies a coherence condition.  The constructions, which are identical to
those in \cite{EM2}, Theorem 4.2, are as follows.

\begin{construction}\label{constructleftadjoint}
Let $\bd M$ be a multicategory.  We construct a symmetric monoidal category $L\bd M$, which is actually permutative, as follows.
The objects of $L\bd M$ are the free monoid on the objects of $\bd M$, namely
\[
\coprod_{n\ge0}\Ob\bd M^n.
\]
Given objects $\bd x=(x_1,\dots,x_j)$ and $\bd y=(y_1,\dots,y_n)$ of $L\bd M$, we define the elements of the morphism set 
$L\bd M(\bd x,\bd y)$ to consist of ordered pairs $(f,\br{\phi_s})$, where $f:\{1,\dots,j\}\to\{1,\dots,n\}$ is a function with no further
structure, and for $1\le s\le n$, $\phi_s$ is a morphism in $\bd M(\br{x_r}_{f(r)=s};y_s)$.

Given a third object $\bd z=(z_1,\dots,z_p)$ of $L\bd M$, and a morphism $(g,\br{\psi_t})\in L\bd M(\bd y,\bd z)$, we define
\[
(g,\br{\psi_t})\circ(f,\br{\phi_s})=(g\circ f,\br{\chi_t}),
\]
where the morphisms $\chi_t:\br{x_r}_{gf(r)=t}\to z_t$ are given by the composite
\[
\xymatrix@C+20pt{
\br{x_r}_{gf(r)=t}\cong\bigodot_{g(s)=t}\br{x_r}_{f(r)=s}
\ar[r]^-{\odot_{g(s)=t}\phi_s}
&\br{y_s}_{g(s)=t}
\ar[r]^-{\psi_t}
&z_t.
}
\]
Here the first isomorphism simply rearranges the tuple $\br{x_r}_{gf(r)=t}$ into chunks corresponding to each $s$ for which $g(s)=t$.
The symmetric monoidal structure, which is actually permutative, is given by concatenation for the product, and the
empty list as the unit.  

To define the functoriality of this constriction, suppose given a multifunctor $F:\bd M\to\bd N$.  We define $LF:L\bd M\to L\bd N$, which will actually
be a strict monoidal functor, as follows.  On objects, this is just the free monoid functor, so
\[
LF(x_1,\dots,x_j)=(Fx_1,\dots,Fx_j).
\]
On morphisms, suppose given $(f,\br{\phi_s}):\bd x\to\bd y$, where $\bd x$ and $\bd y$ are as above, so we have
\[
f:\{1,\dots,j\}\to\{1,\dots,n\}\text{ and }\phi_s:\br{x_r}_{f(r)=s}\to y_s\text{ for }1\le s\le n.
\]
Then we just define
\[
LF(f,\br{\phi_s})=(f,\br{F\phi_s}).
\]
Functoriality is now straightforward to verify, as is the fact that $LF$ is strict monoidal, and in fact strict symmetric monoidal.
\end{construction}

To show that this construction gives a weak left adjoint to the underlying multicategory construction, we provide a unit and weak counit, and
show that the adjunction triangles commute.  Since $L\bd M$ is permutative, we use the usual definition for the underlying multicategory
of a permutative category, which in this case becomes
\[
UL\bd M(\bd x_1,\dots,\bd x_n;\bd y)=L\bd M(\odot_s\bd x_s,\bd y).
\]

\begin{definition}
Let $\bd M$ be a multicategory.  We define the unit map $\eta:\bd M\to UL\bd M$ as follows.  On objects, this is just
the unit map of the free-forgetful adjunction between sets and monoids, so sends an object $x$ to the list of length 1 with entry $x$,
i.e, we include $\Ob\bd M$ as the objects in level 1 in $\Ob L\bd M$.  On morphisms, suppose $\phi\in\bd M(\bd x;y)$, where
$\bd x=(x_1,\dots,x_j)$.  Then there is exactly one function $p:\{1,\dots,j\}\to\{1\}$, so we send $\phi$ to the $j$-morphism $(p,\{\phi\})$
of $UL\bd M$.  We can now check that this defines a map of multicategories, and gives a natural map $\id\to UL$.
\end{definition}

For the weak counit, we use the following.

\begin{definition}
Let $\C$ be a symmetric monoidal category.  Suppose again that we are using
our constant sequence $\kappa_n:\Ob\C^n\to\Ob Y(n)$ 
with $\kappa_0=0$, $\kappa_1=1$, and $\kappa_n=\gamma(m;\kappa_{n-1},1)$ for $n\ge2$
to define our underlying multicategory.  
We define the counit map $\varepsilon:LU\C\to\C$ as follows.  
On objects our map sends
\[
(y_1,\dots,y_n)=\bd y\mapsto\kappa_n\bd y=\kappabar\bd y.
\]
Now let $\bd x=(x_1,\dots,x_j)$, and suppose given a morphism $(f,\br{\phi_s})\in LU\C(\bd x,\bd y)$, so 
$f:\{1,\dots,j\}\to\{1,\dots,n\}$ and $\phi_s:\br{x_r}_{f(r)=s}\to y_s$ in $U\C$.
Let $j_s=\abs{f^{-1}s}$, so $j=j_1+\cdots+j_n$.  Then by definition, we have
\[
\phi_s\in\C(\kappabar(\br{x_r}_{f(r)=s}),y_s),
\]
and so
\[
\br{\phi_s}\in\C^n(\br{\kappabar(\br{x_r}_{f(r)=s})},\bd y).
\]
Let $\sigma_f$ be the element of $\Sigma_j$ for which
\[
\sigma_f\cdot\bd x=\odot_r\br{x_s}_{f(r)=s}.
\]
Then we have isomorphisms in $Y(j)$
\[
\xymatrix{
\kappa_j
\ar[r]^-{\cong}
&\kappa_j\cdot\sigma_f
\ar[r]^-{\cong}
&\gamma(\kappa_n;\br{\kappa_{j_s}})\cdot\sigma_f,
}
\]
which induce the isomorphisms in the following composition:
\[
\xymatrix{
\C^n(\br{\kappabar(\br{x_r}_{f(r)=s})},\bd y)
\ar[d]^-{\kappa_n}
\\\C(\kappa_n\br{\kappabar(\br{x_r}_{f(r)=s})},\kappabar\bd y)
\ar[d]^-{\cong}
\\\C(\kappabar(\odot_s\br{x_r}_{f(r)=s}),\kappabar\bd y)
\ar[d]^-{\cong}
\\\C(\kappabar\bd x,\kappabar\bd y).
}
\]
We use the image of $\br{\phi_s}$ under this composition as our $\varepsilon(f,\br{\phi_s})$.
It is an exercise to see that  $\varepsilon$ is a strong symmetric monoidal functor.
\end{definition}

Several remarks are in order about this weak counit.  First, it is \emph{not} strictly natural, but rather only natural
up to a natural transformation, in the sense that if we are given a lax symmetric monoidal functor $F:\C\to\D$,
we can form the following diagram:
\[
\xymatrix{
LU\C\ar[r]^-{LUF}\ar[d]_{\varepsilon_\C}
&LU\D\ar[d]^-{\varepsilon_\D}
\\\C\ar[r]_-{F}
&\D.
}
\]
For $\varepsilon$ to be natural, this diagram would have to always commute on the nose, but instead it only
commutes up to a natural map, which is an isomorphism if $F$ is strong monoidal.  If $F$ is strict monoidal, we do
get equality.  We see this since following
an object $(x_1, x_2)$ of $LU\C$ counterclockwise, it ends up first at $x_1\oplus x_2$, and then $F(x_1\oplus x_2)$.
However, clockwise it ends up first at $(Fx_1,Fx_2)$, and then at $Fx_1\oplus Fx_2$.  We then have to use
the lax monoidal structure map
\[
\xi_F:Fx_1\oplus Fx_2\to F(x_1\oplus x_2)
\]
in order to give a map between the two ways of traversing the square.  For a general object $\bd x=(x_1,\dots,x_n)$ of $LU\C$,
we use the map $\xi_n:\kappabar F\bd x\to F\kappabar\bd x$ of Definition 4.5, which is only an equality
when $F$ is a strict monoidal functor.  These then combine to give a
natural map $\xi$ from the clockwise direction to the counterclockwise direction, so we have a 2-cell
\[
\xymatrix{
LU\C\ar[r]^-{LUF}\ar[d]_{\varepsilon_\C}
\drtwocell<\omit>{\,\,\,\xi_F}
&LU\D\ar[d]^-{\varepsilon_\D}
\\\C\ar[r]_-{F}
&\D
}
\]
instead of a commutative diagram.  The natural maps $\xi$ are coherent, in the sense that if we have another lax symmetric
monoidal functor $G:\D\to\E$, then we have the following equality of pasting diagrams:
\[
\xymatrix{
LU\C\ar[r]^-{LUF}\ar[d]_{\varepsilon_\C}
\drtwocell<\omit>{\,\,\,\xi_F}
&LU\D\ar[d]^-{\varepsilon_\D}\ar[r]^-{LUG}
\drtwocell<\omit>{\,\,\,\xi_G}
&LU\E\ar[d]^-{\varepsilon_\E}
\\\C\ar[r]_-{F}
&\D\ar[r]_-{G}
&\E
}
\,\,\,
\xymatrix{
\relax\ar@{}[d]_-{\displaystyle=}
\\\relax
}
\xymatrix@C+30pt{
LU\C\ar[r]^-{LU(G\circ F)}\ar[d]_-{\varepsilon_\C}
\drtwocell<\omit>{\,\,\,\,\,\,\xi_{GF}}
&LU\E\ar[d]^-{\varepsilon_\E}
\\\C\ar[r]_-{G\circ F}
&\E.
}
\]
Further, $\xi_\id$ is the identity transformation.

Note that the counit $\varepsilon$ is strict monoidal if $\C$ is permutative, but only strong monoidal in general: we have
\[
\varepsilon\bd x\oplus\varepsilon\bd y
=\kappabar\bd x\oplus\kappabar\bd y
=\gamma(m;\kappa_j,\kappa_n)\cdot(\bd x\odot\bd y),
\]
but 
\[
\varepsilon(\bd x\odot\bd y)
=\kappabar(\bd x\odot\bd y)
=\kappa_{j+n}\cdot(\bd x\odot\bd y),
\]
and $\kappa_{j+n}\ne\gamma(m;\kappa_j,\kappa_n)$ in $Y({j+n})$.  
In all cases, the structure map $\varepsilon\bd x\oplus\varepsilon\bd y\to\varepsilon(\bd x\odot\bd y)$
is induced by the unique isomorphism
\[
\gamma(m;\kappa_j,\kappa_n)\to\kappa_{j+n}
\]
in $Y(j+n)$,
which induces an identity if $\C$ is permutative, but not in general.

Since we have chosen $\kappa_0=0$, it follows that $\varepsilon$ is strictly unital: $\varepsilon()=\kappa_0*=e_\C$.  

We leave to the reader the verification of the coherence diagrams showing that $\varepsilon$ is a symmetric monoidal functor.

The proof of Theorem \ref{ladjtheorem} concludes with the following lemma.

\begin{lemma}
The adjunction triangles
\[
\xymatrix{
U\C
\ar[r]^-{\eta_U}
\ar[dr]_-{=}
&UL U\C
\ar[d]^-{U\varepsilon}
\\&U\C
}
\text{ and }
\xymatrix{
L\bd M
\ar[r]^-{L\eta}
\ar[dr]_-{=}
&LUL\bd M
\ar[d]^-{\varepsilon_{L\bd M}}
\\&L\bd M
}
\]
both commute.
\end{lemma}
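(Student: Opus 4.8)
The plan is to verify the two triangles separately, in each case by unwinding all the definitions on objects and on morphisms and observing that the composite map is forced to be the identity because every comparison isomorphism in sight is induced by a map in some $Y(n)$, where all diagrams commute (and in fact collapses to an identity because of our chosen constant sequence $\kappa_n=\gamma(m;\kappa_{n-1},1)$). For the first triangle, starting from a morphism $\phi\in U\C(\bd x;y)=\C(\kappabar\bd x,y)$ with $\bd x=(x_1,\dots,x_j)$, I would first compute $\eta_U(\phi)$: by the definition of $\eta$ this is the $j$-morphism $(p,\{\phi\})\in UL\,U\C(\bd x;y)=LU\C(\bd x,(y))$, where $p\colon\{1,\dots,j\}\to\{1\}$ is the unique function. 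Then I would apply $U\varepsilon$, which on a morphism set is just $\varepsilon$ applied to morphisms of $LU\C$. Here $n=1$, so $\bd y=(y)$, $j_1=j$, $\sigma_f=\id$ (since $f=p$ already puts the single block in order), $\kappa_1=1$, and $\gamma(\kappa_1;\kappa_j)=\kappa_j$; the three-step composite in the definition of $\varepsilon$ therefore reads: apply $\kappa_1=1$ (the identity functor) to $\{\phi\}$, then compose with the isomorphism $\kappa_j\to\kappa_j\cdot\sigma_f=\kappa_j$ (the identity) and with $\kappa_1\br{\kappabar(\br{x_r})}=\kappabar\bd x$ matched against $\kappabar\bd x$ (again an identity). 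So $\varepsilon(p,\{\phi\})=\phi$, and the first triangle commutes.

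For the second triangle I would argue similarly but now tracking objects and morphisms of $L\bd M$. On objects: starting from $\bd x=(x_1,\dots,x_j)\in\Ob L\bd M$, the unit $L\eta$ is $L$ applied to $\eta$, which on objects is the level-1 inclusion, so $L\eta$ sends the list $(x_1,\dots,x_j)$ to the concatenation of $j$ singletons, i.e.\ back to the length-$j$ list $(x_1,\dots,x_j)$ now regarded as an object of $LUL\bd M$ at the outer level. Then $\varepsilon_{L\bd M}$ sends an outer list $(\bd y_1,\dots,\bd y_n)$ of objects of $L\bd M$ to $\kappabar(\bd y_1,\dots,\bd y_n)$ in $L\bd M$, which since $L\bd M$ is permutative (so $\kappabar$ is literal iterated concatenation) is just the concatenation $\bd y_1\odot\cdots\odot\bd y_n$; with each $\bd y_r$ the singleton $(x_r)$ this concatenation is $(x_1,\dots,x_j)=\bd x$. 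On morphisms one does the same bookkeeping: a morphism $(f,\br{\phi_s})\colon\bd x\to\bd x'$ in $L\bd M$ is sent by $L\eta$ to $(f,\br{(p_s,\{\phi_s\})})$, and then $\varepsilon_{L\bd M}$ reassembles this via the composite in its definition; because $L\bd M$ is permutative, every comparison isomorphism appearing there ($\kappa_j\to\kappa_j\cdot\sigma_f$, and $\kappa_j\to\gamma(\kappa_n;\br{\kappa_{j_s}})$) induces the identity, and the rearrangement isomorphism $\br{x_r}_{gf(r)=t}\cong\bigodot\br{x_r}_{f(r)=s}$ in the composition law of $L\bd M$ is likewise trivial here, so one recovers exactly $(f,\br{\phi_s})$.

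The only real content is checking that all the comparison isomorphisms genuinely collapse: one must confirm that the permutation $\sigma_f$ entering the definition of $\varepsilon$ is the identity in both triangles (because the blocks $\br{x_r}_{f(r)=s}$ are already in left-to-right order when $f$ is $p$, respectively when we are applying $L\eta$), and one must invoke that the action of $Y$ on a \emph{permutative} category sends the canonical isomorphism $\gamma(m;\kappa_j,\kappa_n)\to\kappa_{j+n}$ (and its iterates) to an identity, which is exactly the earlier observation that $\varepsilon$ is \emph{strict} monoidal when the target is permutative. Granting that, both composites are visibly identity maps on objects and on all morphism sets, so both triangles commute. I expect the main obstacle to be purely notational: keeping straight the three nested list-levels in $LUL\bd M$ and $ULU\C$ while unwinding $\varepsilon$, rather than any substantive difficulty.
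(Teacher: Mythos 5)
Your overall strategy is the same as the paper's (the paper's own proof is very terse: it checks objects and asserts the morphism-level checks are routine), and your treatment of the first triangle is correct: with $f=p$ constant one has $\sigma_p=\id$, $\gamma(\kappa_1;\kappa_j)=\kappa_j$, and the iterated structure map of $\varepsilon$ entering the definition of $U\varepsilon$ is induced by an endomorphism of $\kappa_j$ in $Y(j)$, hence is the identity, so $U\varepsilon(\eta_U\phi)=\varepsilon(p,\{\phi\})=\phi$. Your object-level computation for the second triangle is also fine (indeed it is the accurate unwinding: $L\eta\bd x$ is the $j$-tuple of singletons, which $\varepsilon_{L\bd M}$ concatenates back to $\bd x$).

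However, there is a genuine error in your justification of the second triangle on morphisms. You claim that the permutation $\sigma_f$ appearing in the definition of $\varepsilon$ is the identity ``when we are applying $L\eta$,'' so that every comparison isomorphism collapses. That is false: $L\eta(f,\br{\phi_s})=(f,\br{(p_s,\{\phi_s\})})$ carries the \emph{same} arbitrary function $f$, and for a non--order-preserving $f$ (e.g.\ $j=n=2$, $f(1)=2$, $f(2)=1$) the permutation $\sigma_f$ with $\sigma_f\bd x=\odot_s\br{x_r}_{f(r)=s}$ is a nontrivial transposition. Permutativity of $L\bd M$ kills the associativity and unit parts of the comparison isomorphisms, but not the symmetry: the map induced by $\kappa_j\to\kappa_j\cdot\sigma_f$ is the symmetry morphism $\bd x\to\sigma_f\bd x$ of $L\bd M$, which is not an identity. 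The correct check is instead that composing this symmetry morphism with $\odot_s\psi_s$ via the composition law of $L\bd M$ reproduces $(f,\br{\phi_s})$: the function parts compose to $f$, and because $\sigma_f$ preserves the relative order within each fiber of $f$, the block-rearrangement isomorphisms in the composition law are trivial, so the multimorphism parts come out to exactly $\br{\phi_s}$. This is still an easy verification, but it is not the one you gave; as written, your argument for the second triangle rests on a false premise and needs this repair.
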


\begin{proof}
For the left triangle, an object in $U\C$ is just an object $x\in\C$, which gets sent to a list with one entry in $ULU\C$, and
assuming $\kappa_1=1$ allows us to conclude that this gets sent in turn back to $\kappa_1x=x$.  The triangle therefore
commutes on objects, and the verification on morphisms is the same.

For the right triangle, an object $\bd x\in\Ob\bd M^j\subset\Ob L\bd M$ gets sent by $L\eta$ to a single list $(\bd x)$ in $LUL\bd M$.
This is then concatenated as a list of lists by $\varepsilon$, but since there's only one list in the list of lists, it goes back to
the original list $\bd x$.  The triangle therefore commutes on objects, and a simple check shows that it also commutes on morphisms.
We therefore do have a weak adjunction between the underlying multicategory functor and the free symmetric monoidal functor,
and the left adjoint actually lands in permutative categories and strict monoidal maps.
\end{proof}

Because both adjunction triangles commute strictly, and $\eta$ is strictly natural, we do get a comonad $LU$ on the category
of symmetric monoidal categories and lax symmetric monoidal functors, although the composite $UL$ is only a lax monad
on the category of multicategories.  

For the proof of Theorem \ref{strict}, we have already observed that the weak left adjoint $L$ gives us permutative categories and 
strict monoidal maps as its output, so the comonad $LU$ on symmetric monoidal categories does convert lax symmetric monoidal maps of 
symmetric monoidal categories into strict monoidal maps of permutative categories.  The remaining claim of the Theorem  is the following
proposition:

\begin{proposition}
The weak counit $\varepsilon:LU\C\to\C$ of the weak adjunction is a homotopy equivalence of categories.
\end{proposition}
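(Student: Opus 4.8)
The plan is to write down an explicit homotopy inverse to $\varepsilon$. Define a functor $G\colon\C\to LU\C$ by sending an object $c$ to the one-element list $(c)$, and a morphism $g\colon c\to c'$ to the pair $(p,g)$, where $p\colon\{1\}\to\{1\}$ is the unique function and $g$ is viewed as an element of $U\C(c;c')=\C(\kappabar(c),c')=\C(c,c')$, the last identification because $\kappa_1=1$. Using $\kappa_1=1$ once more, every rearrangement isomorphism and every $\phi$ occurring in the composition rule of Construction \ref{constructleftadjoint} for morphisms between one-element lists is an identity, so $G$ is a functor. Moreover $\varepsilon G=\id_\C$ on the nose: on objects $\varepsilon G(c)=\kappabar((c))=\kappa_1\cdot c=c$, and on morphisms the definition of $\varepsilon$ applied to $(p,g)$ collapses, everything in sight having length one, to $g$ itself.

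The remaining, and only substantive, point is to connect $G\varepsilon$ to the identity functor of $LU\C$. For each object $\bd x=(x_1,\dots,x_n)$ of $LU\C$, let
\[
\nu_{\bd x}\colon\bd x\longrightarrow(\kappabar\bd x)=G\varepsilon(\bd x)
\]
be the pair $(u,\id_{\kappabar\bd x})$, where $u\colon\{1,\dots,n\}\to\{1\}$ is the unique function and $\id_{\kappabar\bd x}$ is regarded as the element of $U\C(\bd x;\kappabar\bd x)=\C(\kappabar\bd x,\kappabar\bd x)$. (From this vantage point $\varepsilon$ is left adjoint to $G$, with unit $\nu$ and identity counit, the adjunction bijection $LU\C(\bd x,(c))\cong\C(\kappabar\bd x,c)=\C(\varepsilon\bd x,c)$ being the one that discards the necessarily-unique function part of a morphism; but only the naturality of $\nu$ is needed below.)

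To check that $\nu$ is natural, take a morphism $\psi=(g,\br{\chi_s})\colon\bd x\to\bd y$ of $LU\C$ and compare $G\varepsilon(\psi)\circ\nu_{\bd x}$ with $\nu_{\bd y}\circ\psi$. Unwinding the composition rule of Construction \ref{constructleftadjoint}, the first composite is $(u,\varepsilon\psi)$, with $\varepsilon\psi$ reinterpreted as an element of $U\C(\bd x;\kappabar\bd y)=\C(\kappabar\bd x,\kappabar\bd y)$: composing against the one-element morphism $\varepsilon\psi$ introduces no permutation and no nontrivial $\phi$. The second composite is $(u,\chi)$, where $\chi\in\C(\kappabar\bd x,\kappabar\bd y)$ is produced from $\br{\chi_s}$ by applying the functor $\kappa_n$, composing with the identity $\id_{\kappabar\bd y}$, precomposing with the isomorphism $\phi$ that reassembles $\odot_s\br{x_r}_{g(r)=s}$, and precomposing with the permutation $\sigma_g$ reordering $\bd x$ into $\odot_s\br{x_r}_{g(r)=s}$; this is exactly the recipe defining $\varepsilon(\psi)$, once one observes that the composite isomorphism $\kappabar\bd x\to\kappa_n\br{\kappabar(\br{x_r}_{g(r)=s})}$ appearing here and the corresponding isomorphism in the definition of $\varepsilon$ are both induced from isomorphisms in $Y$, hence equal because all parallel morphisms in $Y(j)$ coincide. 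So $\chi=\varepsilon\psi$, both composites agree, and $\nu$ is natural.

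Since the nerve functor sends natural transformations to homotopies, $\varepsilon G=\id_\C$ together with the homotopy $N(G\varepsilon)\simeq N(\id_{LU\C})$ furnished by $N\nu$ exhibit $N\varepsilon$ and $NG$ as mutually inverse homotopy equivalences. Hence $\varepsilon$ is a homotopy equivalence of categories. The only place any real work is hidden is the naturality statement for $\nu$ in the previous paragraph, and that in turn amounts entirely to matching two inducing isomorphisms in $Y$, which is automatic since all parallel morphisms there are equal; everything else is bookkeeping with the two composition formulas.
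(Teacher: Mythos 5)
Your proposal is correct and takes essentially the same route as the paper: your $G$ is exactly the functor $\eta:\C\to LU\C$ that the paper obtains from the first adjunction triangle as a right inverse to $\varepsilon$, and your $\nu_{\bd x}=(u,\id_{\kappabar\bd x})$ is literally the paper's natural map $\id\to\eta\circ\varepsilon$, with nerves then giving the homotopy equivalence. The only difference is that you make the right inverse and the naturality check explicit (the paper declares the latter ``easy to check''), so this is the same argument in slightly more detail.
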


\begin{proof}
The first adjunction triangle shows that the unit $\eta$ of the adjunction gives a right inverse for $\varepsilon$, since a map on underlying
multicategories is the same as a lax symmetric monoidal functor on symmetric monoidal categories.  We produce a natural map $\nu:\id\to\eta\circ\varepsilon$ of functors $LU\C\to LU\C$,
which therefore shows that $\varepsilon$ and $\eta$ are inverse homotopy equivalences of categories.  

An object $\bd x\in LU\C$ is sent by $\eta\circ\varepsilon$ to the list of length 1 consisting of $\kappabar\bd x$.  
Let's say that $\bd x=(x_1,\dots, x_j)$.
We define $\nu_{\bd x}:\bd x\to(\kappabar\bd x)$ 
as the map in $LU\C$ given 
by the unique function $f:\{1,\dots,j\}\to\{1\}$, together with the identity $\id_{\kappabar\bd x}$.  It is now easy to check
that $\nu$ is a natural map, which concludes the proof. 
\end{proof}

We end this section by noting that $\nu$ does \emph{not} have an inverse, since $f$ is not a bijection.  However, there is a comparison
map to the strictification construction of Isbell \cite{Isbell}, as described explicitly by May in \cite{Einfinity}, Proposition 4.2.  
May's construction
has as its objects the free monoid on the objects of $\C$, but subject to the relation that $e_\C=()$:
the implication is that $e_\C$ is a strict unit to begin with.  
However, the construction can be easily modified to use the free monoid with no relations, which are precisely the objects
of $LU\C$. The morphisms are then created by the counit
$\varepsilon$, rather than being those in Definition \ref{constructleftadjoint}. 
This still gives us a permutative category and a categorical equivalence with the original symmetric monoidal category.
Further, the counit $\varepsilon$ factors through this construction, and therefore $LU\C$ is also homotopy equivalent to it.

\section{Underlying Multicategories: Properties}\label{properties}
We return finally to verifying that the structure given in Section \ref{structure} does satisfy 
the properties necessary
to form a multicategory. 
We return to our original assumption of an arbitrary sequence of functions
\[
\kappa_n:\Ob\C^n\to\Ob Y(n),
\]
which determine the underlying multicategory $U_\kappa\C$.

To give an example of how involved such a structure can look like, we can consider the free symmetric monoidal
category on one object which we describe as follows, using
some notation from \cite{SMC1}. (This example is unnecessary for the subsequent arguments.)  Let's call this 
category $H$.  

The category $H$ is a disjoint union of categories $H(n)$ for $n\ge0$,
where the index $n$ indicates the number of times the freely chosen object is combined with itself. Each category
$H(n)$ has as its objects a set $Z(n)$.
The sets $Z(n)$ themselves consist of ordered pairs
$(a,R)$, where $a$ is a complete parenthesization of at least $n$ letters, 
and $R$ indicates the slots into which $n$ copies of the 
free object are to be inserted, with identities in the rest.  To be precise, we define the parenthesization sets
$V(k)$ for $k\ge0$ by $V(0)=\emptyset$, $V(1)=\{1\}$, and for $k\ge2$,
\[
V(k)=\coprod_{i+j=k}V(i)\times V(j).
\]
The idea is that a complete parenthesization of $k$ letters has a last product, with the left factor and right factors
being parenthesizations of smaller numbers of letters.  The component $a$ of $(a,R)$ is to be an element of
$V(k)$ for $k\ge n$.  

In order to specify the $n$ slots into which our free object is to be inserted, we define 
\[
\mathcal{P}_n(k)=\{R\subset\{1,\dots,k\}:\abs{R}=n\}.
\]
Then we define our object set $Z(n)$ of the component category $H(n)$ by
\[
Z(n)=\coprod_{k\ge n}V(k)\times\mathcal{P}_n(k).
\]

For morphisms, if $(a,R)\in Z(n)$ and $(b,S)\in Z(m)$, then there are no morphisms unless $n=m$,
and if $n=m$, the morphism set is a copy of the symmetric group $\Sigma_n$.  Composition
is given by group multiplication.

The symmetric monoidal structure is given as follows.  First, the unit object is $(1,\emptyset)\in Z(0)$.
Next, for the monoidal product, suppose given $(a,R)\in V(k)\times\mathcal{P}_n(k)\subset Z(n)$ 
and $(b,S)\in V(q)\times\mathcal{P}_m(q)\subset Z(m)$.  Then we define
\[
(a,R)\oplus(b,S)=((a,b),R\amalg k+S),
\]
Here $(a,b)\in V(k)\times V(q)\subset V(k+	q)$, and $R\amalg k+S$ is the subset of $\{1,\dots,k+q\}$
consisting of $R$ together with $k$ added to each element of $S$, so this is just the concatenation
of $R$ and $S$ in $\{1,\dots,k\}\amalg\{1,\dots,q\}=\{1,\dots,k+q\}$.  We therefore obtain an object of $Z(n+m)$.
The monoidal product of morphisms is given by the block sum
$\Sigma_n\times\Sigma_m\to\Sigma_{n+m}$.

The unit isomorphisms and the associator are all given by identity elements of the appropriate $\Sigma_n$.
In detail, if given $(a,R)\in V(k)\times\mathcal{P}_n(k)\subset Z(n)$ then we have
\[
(a,R)\oplus(1,\emptyset)=((a,1),R)\in V(k+1)\times\mathcal{P}_n(k+1)\subset Z(n)
\]
and
\[
(1,\emptyset)\oplus(a,R)=((1,a),1+R)\in V(k+1)\times\mathcal{P}_n(k+1)\subset Z(n).
\]
In both cases, we use the identity element $1_n\in\Sigma_n$ as the unit isomorphism to $(a,R)$.

For the associator, suppose given 
\begin{align*}
&(a,R)\in V(k)\times\mathcal{P}_n(k)\subset Z(n), \\
&(b,S)\in V(q)\times\mathcal{P}_m(q)\subset Z(m), \text{ and }\\
&(c,T)\in V(w)\times\mathcal{P}_t(w)\subset Z(t).
\end{align*}
Then we have
\begin{align*}
&((a,R)\oplus(b,S))\oplus(c,T)=(((a,b),c),R\amalg k+S\amalg k+q+T),\text{ and}\\
&(a,R)\oplus((b,S)\oplus(c,T))=((a,(b,c)),R\amalg k+S\amalg k+q+T).
\end{align*}
Note that
the first of these has first component an element of $V(k+q)\times V(w)$,
while the second has first component an element of $V(k)\times V(q+w)$.  
However, both objects are elements of $Z(n+m+t)$, so we
can use the identity element
$1_{n+m+t}\in\Sigma_{n+m+t}$ as the associator giving an isomorphism between them.

For the transposition, again suppose given $(a,R)\in V(k)\times\mathcal{P}_n(k)\subset Z(n)$ and 
$(b,S)\in V(q)\times\mathcal{P}_m(q)\subset Z(m)$.  Then 
\begin{align*}
&(a,R)\oplus(b,S)=((a,b),R\amalg k+S),\text{ and}\\
&(b,S)\oplus(a,R)=((b,a),S\amalg q+R).
\end{align*}
Both are elements of $Z(n+m)$, but we do not use the identity of $\Sigma_{n+m}$ as the transposition isomorphism, but rather the
element $\tau\br{n,m}$ that transposes a block of length $n$ and a block of length $m$. (We don't want the identity element, since
that would actually give the identity as the transposition when $(b,S)=(a,R)$.)

All the coherence diagrams that do not involve the transposition commute because all the maps are given by identity elements
of a symmetric group.  For the two that do involve the transposition, we find that $\tau^2=\id$ because $\tau\br{n,m}\circ\tau\br{m,n}=\id$.
And the hexagon 
\[
\xymatrix{
((a,R)\oplus(b,S))\oplus(c,T)
\ar[r]^-{\alpha}
\ar[d]_-{\tau\oplus1}
&(a,R)\oplus((b,S)\oplus(c,T))
\ar[r]^-{\tau}
&((b,S)\oplus(c,T))\oplus(a,R)
\ar[d]^-{\alpha}
\\((b,S)\oplus(a,R))\oplus(c,T)
\ar[r]_-{\alpha}
&(b,S)\oplus((a,R)\oplus(c,T))
\ar[r]_-{1\oplus\tau}
&(b,S)\oplus((c,T)\oplus(a,R))
}
\]
commutes because $(1_m\oplus\tau\br{n,t})\circ(\tau\br{n,m}\oplus1_t)=\tau\br{n,m+t}$.

We can now give an example of a sequence of functions $\{\kappa_n: \Ob H^n\to\Ob Y(n)\}$ as follows.
This example is deliberately complicated in order to show how arbitrary such a choice can be.
Suppose given an object $((a_1,R_1),\dots,(a_n,R_n))$ of $H^n$, where $a_i\in Z(j_i)$ and $R_i\in\mathcal{P}_{k_i}(j_i)$,
so $(a_i,R_i)\in\Ob H(k_i)$, and $j_i\ge k_i$ for $1\le i\le n$.  Pick an arbitrary $\beta\in Z(n)$.  Then using the (non-symmetric)
operad structure on $\{Z(n)\}$ given in \cite{SMC1}, we can form
\[
\delta=\gamma(\beta;\alpha_1,\dots,\alpha_n)\in Z(j_1+\cdots j_n).
\]
This replaces each parenthesized slot in $\beta$ with the parenthesization given by the corresponding $\alpha_i$.
Since $j_i>0$ for all $i$, we must have $j_1+\cdots j_n\ge n$, so we can form an object of $Y(n)$ with $\delta$ as its first
component.  Next, let 
\[
S=\{1,j_1+1,j_1+j_2+1,\dots,j_1+\cdots j_{n-1}+1\}\subset\{1,\dots,j_1+\cdots j_n\},
\]
so $S\in\mathcal{P}_n(j_1+\cdots j_n)$.  Finally, to specify an object of $Y(n)$, we need an element of $\Sigma_n$,
so assuming $n\ge2$, we pick the permutation $\sigma$ that transposes the first two elements 
of $\{1,\dots,n\}$
if $n$ is odd, and the last
two elements if $n$ is even.  
Then we assign the object $(\delta,S,\sigma)$ to the object $((\alpha_1,R_1),\dots,(\alpha_n,R_n))$ of $H^n$ in order
to define our function $\kappa_n$.

We now proceed with the proof that $U_\kappa\C$ does have the structure of a multicategory in all cases,
including the example of $U_\kappa H$ for the $H$ and $\kappa$ given above.
We first show that we really do have a right action of $\Sigma_n$ on
the collection of $n$-morphisms in $U_\kappa\C$, and then proceed to verify the 
diagrams given in Definition 2.1 of \cite{EM1}.

\begin{proposition}
Let $\bd x=(x_1,\dots,x_n)\in\Ob\C^n$ and $y\in\Ob\C$.  Then
the maps
\[
\sigma^*:U_\kappa\C(\bd x;y)\to U_\kappa\C(\sigma^{-1}\bd x;y)
\]
produce a right action of $\Sigma_n$ on the collection of $n$-morphisms of $U_\kappa\C$.
\end{proposition}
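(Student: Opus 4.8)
The plan is to verify the two defining properties of a right $\Sigma_n$-action on the graded set of $n$-morphisms: that the identity permutation induces the identity map, and that for $\sigma,\tau\in\Sigma_n$ one has $(\sigma\tau)^*=\tau^*\circ\sigma^*$ (the order reversing because source tuples transform contravariantly, $\tau^{-1}\sigma^{-1}\bd x=(\sigma\tau)^{-1}\bd x$). Everything will be forced by the fact that $Y(n)$ has a unique morphism between any two objects, so all diagrams in $Y(n)$ commute; the only real work is tracking how the $\Sigma_n$-action on $Y(n)$ interacts with evaluation on objects of $\C^n$.

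For the identity, the operad unit axiom gives $\kappa_n(\bd x)\cdot\id=\kappa_n(\bd x)$, so $\theta(\bd x,\id)$ is an isomorphism $\kappa_n(\bd x)\to\kappa_n(\bd x)$ in $Y(n)$; since $\id_{\kappa_n(\bd x)}$ is one such and morphisms in $Y(n)$ are unique, $\theta(\bd x,\id)=\id$. Evaluating at $\bd x$ gives $\id_{\kappabar\bd x}$, so $\id^*=\C(\id,1)=\id_{U_\kappa\C(\bd x;y)}$.

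For composition I would first work entirely in $Y(n)$. There we have the canonical isomorphisms $\theta(\bd x,\sigma)\colon\kappa_n(\sigma^{-1}\bd x)\to\kappa_n(\bd x)\cdot\sigma$, $\theta(\sigma^{-1}\bd x,\tau)\colon\kappa_n(\tau^{-1}\sigma^{-1}\bd x)\to\kappa_n(\sigma^{-1}\bd x)\cdot\tau$, and $\theta(\bd x,\sigma\tau)\colon\kappa_n((\sigma\tau)^{-1}\bd x)\to\kappa_n(\bd x)\cdot(\sigma\tau)$. The composite
\[
\kappa_n(\tau^{-1}\sigma^{-1}\bd x)\xrightarrow{\theta(\sigma^{-1}\bd x,\tau)}\kappa_n(\sigma^{-1}\bd x)\cdot\tau\xrightarrow{\theta(\bd x,\sigma)\cdot\tau}(\kappa_n(\bd x)\cdot\sigma)\cdot\tau=\kappa_n(\bd x)\cdot(\sigma\tau)
\]
has the same source and target as $\theta(\bd x,\sigma\tau)$ — using $(\sigma\tau)^{-1}\bd x=\tau^{-1}\sigma^{-1}\bd x$ and the operad associativity axiom for the $\Sigma_n$-action — hence equals it by uniqueness. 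Then I would evaluate this identity of morphisms in $Y(n)$ at the object $(\sigma\tau)^{-1}\bd x$ of $\C^n$. Using the operad equivariance axioms, namely that $(A\cdot\tau)$ evaluated at $\tau^{-1}\bd w$ is $A$ evaluated at $\bd w$, and that the component of $\theta(\bd x,\sigma)\cdot\tau$ at $\tau^{-1}\sigma^{-1}\bd x$ is the component of $\theta(\bd x,\sigma)$ at $\sigma^{-1}\bd x$, the first leg becomes exactly the isomorphism $\kappabar((\sigma\tau)^{-1}\bd x)\to\kappabar(\sigma^{-1}\bd x)$ inducing $\tau^*$, the second leg becomes the isomorphism $\kappabar(\sigma^{-1}\bd x)\to\kappabar\bd x$ inducing $\sigma^*$, and the full composite is the isomorphism $\kappabar((\sigma\tau)^{-1}\bd x)\to\kappabar\bd x$ inducing $(\sigma\tau)^*$. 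Applying the contravariant functor $\C(-,y)$ then yields $(\sigma\tau)^*=\tau^*\circ\sigma^*$.

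The main obstacle is not conceptual but bookkeeping: one must be scrupulous about the conventions in play — that $\sigma^{-1}\bd x=(x_{\sigma(1)},\dots,x_{\sigma(n)})$, that $\Sigma_n$ acts on the left on tuples of objects but on the right on $Y(n)$, and the precise form of the operad equivariance axioms relating $A\cdot\sigma$ to $A$ — so that the two composites line up on the nose and the order-reversal in $(\sigma\tau)^*=\tau^*\circ\sigma^*$ comes out correctly rather than backwards.
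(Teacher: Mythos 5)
Your proposal is correct and follows essentially the same route as the paper: the identity case via uniqueness of morphisms in $Y(1\text{-object sense})$, i.e.\ $\theta(\bd x,\id)=\id$, and the composition case via the identity $\theta(\bd x,\sigma\tau)=(\theta(\bd x,\sigma)\cdot\tau)\circ\theta(\sigma^{-1}\bd x,\tau)$ forced by uniqueness of morphisms in $Y(n)$, combined with the equivariance of the action map $Y(n)\to\Cat(\C^n,\C)$ to identify the evaluated components (the paper packages exactly these two facts as the commuting pentagon and the top right square of a single diagram in $\C$). Your order bookkeeping, including $(\sigma\tau)^*=\tau^*\circ\sigma^*$ after applying the contravariant $\C(-,y)$, matches the paper's verification.
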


\begin{proof}
We must show that $1\in\Sigma_n$ produces the identity map, and that given $\sigma,\tau\in\Sigma_n$,
we have
\[
\tau^*\circ\sigma^*=(\sigma\circ\tau)^*.
\]
For both of these, recall that $\sigma^*$ is induced by the unique map in $Y(n)$
\[
\theta(\bd x,\sigma):\kappa_n(\sigma^{-1}\bd x)\to\kappa_n(\bd x)\cdot\sigma.
\]
But if $\sigma=1$, then source and target are both just $\kappa_n\bd x\in Y(n)$, and objects of $Y$
have only the identity as automorphisms, so $\theta(\bd x,1)=\id$, and therefore $1\in\Sigma_n$ induces the identity
on the collection of $n$-morphisms of $U_\kappa\C$.

To see that $\tau^*\circ\sigma^*=(\sigma\circ\tau)^*$, we examine the following diagram between representing
objects:
\[
\xymatrix@C-15pt
{
\kappa_n(\bd x)(\bd x)\ar[r]^-{=}
\ar[d]_-{=}
&[\kappa_n(\bd x)\cdot\sigma](\sigma^{-1}\bd x)
\ar[dd]^-{=}
\ar[r]^-{\theta(\bd x,\sigma)}
&[\kappa_n(\sigma^{-1}\bd x)](\sigma^{-1}\bd x)
\ar[d]^-{=}
\\[\kappa_n(\bd x)\cdot(\sigma\circ\tau)]((\sigma\circ\tau)^{-1}\bd x)
\ar[dr]^-{=}
\ar[dd]_-{\theta(\bd x,\sigma\circ\tau)}
&&[\kappa_n(\sigma^{-1}\bd x)\cdot\tau](\tau^{-1}\sigma^{-1}\bd x)
\ar[dd]^-{\theta(\sigma^{-1}\bd x,\tau)}
\\&[(\kappa_n(\bd x)\cdot\sigma)\cdot\tau](\tau^{-1}\sigma^{-1}\bd x)
\ar[ur]^-{\theta(\bd x,\sigma)\cdot\tau}
\\\kappa_n((\sigma\circ\tau)^{-1}\bd x)((\sigma\circ\tau)^{-1}\bd x)
\ar[rr]_-{=}
&&\kappa_n(\tau^{-1}\sigma^{-1}\bd x)(\tau^{-1}\sigma^{-1}\bd x).
}
\]
The perimeter of the diagram gives the desired identity.  The top left square commutes, being a diagram of identities
on the same object.  The lower pentagon commutes since it is induced from a diagram in $Y(n)$, where all diagrams commute.
This leaves the top right square, which commutes due to the equivariance of the action map of $Y$ on $\C$.  In particular, if we
explicitly say that $\xi:Y(n)\to\Cat(\C^n,\C)$ is the action map, then we have
\[
\xi(\theta(\bd x,\sigma)\cdot\tau)=\xi(\theta(\bd x,\sigma))\cdot\tau.
\]
This is an identity of natural transformations of functors $\C^n\to\C$, which we evaluate at the object
$
\tau^{-1}\sigma^{-1}\bd x.
$
Then the left side becomes
\[
(\theta(\bd x,\sigma)\cdot\tau)\cdot(\tau^{-1}\sigma^{-1}\bd x)
\]
which is the bottom arrow in the top right square, while the right side becomes
\[
\theta(\bd x,\sigma)\cdot\tau\cdot\tau^{-1}\sigma^{-1}\bd x=\theta(\bd x,\sigma)\cdot\sigma^{-1}\bd x,
\]
which is the top arrow in the top right square, so that square does commute.  The entire diagram therefore commutes.
\end{proof}

We must verify the associativity diagram, given as (1) on p.\ 168 of \cite{EM1}, but we give some notation that will
allow us to display it in slightly more compressed form.  So suppose given a final target $d$.  We suppose given a tuple
$\bd c=(c_1,\dots,c_n)$ that will map to $d$, and for each $s$ with $1\le s\le n$, a tuple $\bd b_s$ that will map to $c_s$.
Further,
we write the entries in $\bd b_s$ as $b_{st}$ for a second index $t$,
where we say $1\le t\le j_s$, with $j=j_1+\cdots+j_n$.  
We write the concatenation of all the $\bd b_s$'s as $\odot_s\bd b_s$, which is a $j$-tuple. 
For each index pair $st$, 
we assume given
a tuple $\bd a_{st}$ that will map to $b_{st}$.  
For any fixed $s$ we write the concatenation of the tuples $\bd a_{st}$
as $\odot_t\bd a_{st}$, and the concatenation of all these (so a concatenation of concatenations) as $\odot_s\odot_t\bd a_{st}$.
We now claim the following, where $\Gamma$ is the multiproduct (or composition) in $U_\kappa\C$:

\begin{proposition}
The following associativity diagram in $U_\kappa\C$ commutes:
\[
\xymatrix@C=-72pt @R-5pt
{
&U_\kappa\C(\bd c;d)\times\Prod_s U_\kappa\C(\odot_t\bd a_{st};c_s)
\ar[ddr]^-{\multprod}
\\ U_\kappa\C(\bd c;d)\times\Prod_s\left(U_\kappa\C(\bd b_s;c_s)\times\Prod_t U_\kappa\C(\bd a_{st};b_{st})\right) 
\ar[ur]^-{\id\times\prod_s\multprod}\ar[dd]_-{\cong}
\\&& U_\kappa\C(\odot_s\odot_t\bd a_{st};d). 
\\ U_\kappa\C(\bd c;d)\times\Prod_sU_\kappa\C(\bd b_s;c_s)\times\Prod_s\Prod_tU_\kappa\C(\bd a_{st};b_{st})
\ar[dr]^-{\multprod\times1}
\\& U_\kappa\C(\odot_s\bd b_s;d)\times\Prod_s\Prod_tU_\kappa\C(\bd a_{st};b_{st})
\ar[uur]_-{\multprod}
}
\]
\end{proposition}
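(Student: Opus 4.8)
The plan is to unpack both legs of the pentagon using the definition of the multiproduct $\multprod$ from Section \ref{structure}, and then to recognise the resulting comparison as a paste of two squares in $\C$: one whose commutativity is forced because every one of its edges is induced by a morphism of $Y(k)$, where $k$ is the total arity $\sum_s\sum_t\abs{\bd a_{st}}$, and one that commutes by naturality. Write $\ell_{st}=\abs{\bd a_{st}}$, $m_s=\sum_t\ell_{st}=\abs{\odot_t\bd a_{st}}$, and $k=\sum_s m_s=\abs{\odot_s\odot_t\bd a_{st}}$, and recall that a general element of the top--left corner is a triple $(h,\br{g_s},\br{f_{st}})$ with $h\in\C(\kappabar\bd c,d)$, $g_s\in\C(\kappabar\bd b_s,c_s)$, and $f_{st}\in\C(\kappabar\bd a_{st},b_{st})$.

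Tracing this triple around the top of the pentagon, the inner $\prod_s\multprod$ produces, for each $s$, the morphism $g_s\circ\kappa_{j_s}\bd b_s\br{f_{st}}_t\circ\phi(\bd b_s,\br{\bd a_{st}}_t)$, and the outer $\multprod$ then yields
\[
h\circ\kappa_n\bd c\br{g_s\circ\kappa_{j_s}\bd b_s\br{f_{st}}_t\circ\phi(\bd b_s,\br{\bd a_{st}}_t)}_s\circ\phi(\bd c,\br{\odot_t\bd a_{st}}_s).
\]
Tracing it around the bottom --- where the arrow $\cong$ is mere rebracketing of the product --- the map $\multprod\times1$ first produces $h\circ\kappa_n\bd c\br{g_s}_s\circ\phi(\bd c,\br{\bd b_s}_s)$ and then the final $\multprod$ yields
\[
h\circ\kappa_n\bd c\br{g_s}_s\circ\phi(\bd c,\br{\bd b_s}_s)\circ\kappa_j(\odot_s\bd b_s)\br{f_{st}}_{st}\circ\phi(\odot_s\bd b_s,\br{\bd a_{st}}_{st}),
\]
where in this last composite $\odot_s\bd b_s$ is regarded as a $j$-tuple and $\br{f_{st}}_{st}$ as the corresponding $j$-tuple of morphisms. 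Applying functoriality of the functor $\kappa_n\bd c:\C^n\to\C$ to the first expression, both composites acquire the common left factor $h\circ\kappa_n\bd c\br{g_s}_s$, so it suffices to equate the two residual morphisms $\kappabar(\odot_s\odot_t\bd a_{st})\to\kappa_n\bd c\br{\kappabar\bd b_s}_s$, namely
\[
\kappa_n\bd c\br{\kappa_{j_s}\bd b_s\br{f_{st}}_t}_s\circ\kappa_n\bd c\br{\phi(\bd b_s,\br{\bd a_{st}}_t)}_s\circ\phi(\bd c,\br{\odot_t\bd a_{st}}_s)
\]
and
\[
\phi(\bd c,\br{\bd b_s}_s)\circ\kappa_j(\odot_s\bd b_s)\br{f_{st}}_{st}\circ\phi(\odot_s\bd b_s,\br{\bd a_{st}}_{st}).
\]

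I would then insert the morphism $\Psi\colon\kappa_j(\odot_s\bd b_s)\br{\kappabar\bd a_{st}}_{st}\to\kappa_n\bd c\br{\kappa_{j_s}\bd b_s\br{\kappabar\bd a_{st}}_t}_s$ induced by the unique morphism of $Y(k)$ between these two objects --- they are in fact equal, once one uses that the action of $Y$ sends $\gamma$ to composition of functors and invokes the associativity axiom of the operad $Y$ --- and split the residual identity along $\Psi$. The square containing only the various $\phi$'s and the functorial image $\kappa_n\bd c\br{\phi(\bd b_s,\br{\bd a_{st}}_t)}_s$ commutes because each of its four edges is induced by a morphism of $Y(k)$, applied to the object $\odot_s\odot_t\bd a_{st}$ --- here one uses that applying $\kappa_n\bd c$ to an $n$-tuple of morphisms each induced by a morphism of the $Y(m_s)$ yields a morphism induced by a morphism of $Y(k)$, via $\gamma$ --- and $Y(k)$ has a unique morphism between any two of its objects, so every diagram of such morphisms commutes. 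The remaining square is the naturality square, evaluated at the morphism $\br{f_{st}}_{st}\colon\br{\kappabar\bd a_{st}}_{st}\to\odot_s\bd b_s$ of $\C^j$, for the natural isomorphism between the functors $\kappa_j(\odot_s\bd b_s)$ and $\gamma(\kappa_n\bd c;\br{\kappa_{j_s}\bd b_s})$ on $\C^j$ determined by the unique morphism of $Y(j)$ between these objects; its two horizontal edges are precisely $\phi(\bd c,\br{\bd b_s}_s)$ and $\Psi$. Pasting the two squares gives the residual identity, and hence the pentagon.

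The main obstacle I expect is purely organisational: the three-level tower $\bd c$, $\bd b_s$, $\bd a_{st}$ makes the unpacked diagram bulky, and one must track carefully which $\phi$ belongs to which invocation of $\multprod$ and to which length of tuple each functor $\kappa$ is being applied. The genuinely substantive input is slight and has already been used repeatedly in this excerpt: the associativity of the operad $Y$ together with the fact that $Y(k)$ has a unique morphism between any two objects (which makes the $\phi$-only square automatic), plus ordinary functoriality and naturality for the square carrying the $f_{st}$.
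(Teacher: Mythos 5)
Your proposal is correct and takes essentially the same route as the paper: after peeling off the common factor $h\circ\kappa_n\bd c\br{g_s}$ by functoriality of $\kappa_n\bd c$, you settle the residual identity by combining the naturality square for the transformation $\kappa_j(\odot_s\bd b_s)\Rightarrow\gamma(\kappa_n\bd c;\br{\kappa_{j_s}\bd b_s})$ evaluated at $\br{f_{st}}$ with the uniqueness of morphisms in $Y$ in the total concatenation degree $k$, which are precisely the paper's two closing ingredients (its naturality square for $\phi$ and its triangle induced in $Y$ at total degree). The only quibble is your parenthetical claim that the source and target of $\Psi$ are equal: $\kappa_j(\odot_s\bd b_s)$ is an arbitrary value of the function $\kappa_j$ and need not coincide with $\gamma(\kappa_n\bd c;\br{\kappa_{j_s}\bd b_s})$, but your argument only ever uses the unique induced isomorphism $\Psi$, so nothing breaks.
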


\begin{proof}
The basic idea for showing that this diagram commutes is to connect it to the associativity square in $\C$
displayed below:
\[
\xymatrix@C=-172pt{
&\C(\kappabar(\bd c),d)\times\C(\kappabar(\odot_s\bd b_s),\kappabar(\bd c))\times\C(\kappabar(\odot_s\odot_t\bd a_{st}),\kappabar(\odot_s\bd b_s))
\ar[dl]_-{\circ\times1}\ar[ddr]^-{1\times\circ}
\\\C(\kappabar(\odot_s\bd b_s),d)\times\C(\kappabar(\odot_s\odot_t\bd a_{st}),\kappabar(\odot_s\bd b_s))
\ar[ddr]_-{\circ}
\\&&\C(\kappabar(\bd c),d)\times\C(\kappabar(\odot_s\odot_t\bd a_{st}),\kappabar(\bd c))
\ar[dl]^-{\circ}
\\&\C(\kappabar(\odot_s\odot_t\bd a_{st}),d).
}
\]

Unpacking the counterclockwise direction of the desired associativity diagram using the definition of the multiproduct composition $\Gamma$,
we get the following:
\[
\xymatrix{
\C(\kappabar\bd c,d)\times\C^n(\br{\kappabar\bd b_s},\bd c)\times\C^{j}(\odot_s\br{\kappabar\bd a_{st}},\odot_s\bd b_s)
\ar[d]^-{1\times\kappa_n(\bd c)\times1}
\\\C(\kappabar\bd c,d)\times\C(\kappa_n(\bd c)\br{\kappabar\bd b_s},\kappabar\bd c)\times\C^{j}(\odot_s\br{\kappabar\bd a_{st}},\odot_s\bd b_s)
\ar[d]^-{\circ\times1}
\\\C(\kappa_n(\bd c)\br{\kappabar\bd b_s},d)\times\C^{j}(\odot_s\br{\kappabar\bd a_{st}},\odot_s\bd b_s)
\ar[d]^-{\C(\phi(\bd c,\br{\bd b_s}),1)\times1}
\\\C(\kappabar(\odot_s\bd b_s),d)\times\C^{j}(\odot_s\br{\kappabar\bd a_{st}},\odot_s\bd b_s)
\ar[d]^-{1\times\kappa_j(\odot_s\bd b_s)}
\\\C(\kappabar(\odot_s\bd b_s),d)\times\C(\kappa_j(\odot_s\bd b_s)(\odot_s\br{\kappabar\bd a_{st}}),\kappabar(\odot_s\bd b_s))
\ar[d]^-{\circ}
\\\C(\kappa_j(\odot_s\bd b_s)(\odot_s\br{\kappabar\bd a_{st}}),d)
\ar[d]^-{\C(\phi(\odot_s\bd b_s,\br{\odot_s\bd a_{st}}),1)}
\\\C(\kappabar(\odot_s\odot_t\bd a_{st}),d).
}
\]
However, the composites and the actions of the morphisms in $Y$ are essentially independent, so we can compress the
display to the following:
\[
\xymatrix{
\C(\kappabar\bd c,d)\times\C^n(\br{\kappabar\bd b_s},\bd c)\times\C^{j}(\odot_s\br{\kappabar\bd a_{st}},\odot_s\bd b_s)
\ar[d]^-{1\times\kappa_n(\bd c)\times\kappa_j(\odot_s\bd b_s)}
\\\makebox[.7\textwidth][c]{$\C(\kappabar\bd c,d)\times\C(\kappa_n(\bd c)\br{\kappabar\bd b_s},\kappabar\bd c)
\times\C(\kappa_j(\odot_s\bd b_s)(\odot_s\br{\kappabar\bd a_{st}}),\kappabar(\odot_s\bd b_s))$}
\ar[d]^-{1\times\C(\phi(\bd c,\br{\bd b_s}),1)\times\C(\phi(\odot_s\bd b_s,\br{\odot_s\bd a_{st}}),1)}
\\\C(\kappabar\bd c,d)\times\C(\kappabar(\odot_s\bd b_s),\kappabar\bd c)\times\C(\kappabar(\odot_s\odot_t\bd a_{st}),\kappabar(\odot_s\bd b_s))
\ar[d]^-{\circ\times1}
\\\C(\kappabar(\odot_s\bd b_s),d)\times\C(\kappabar(\odot_s\odot_t\bd a_{st}),\kappabar(\odot_s\bd b_s))
\ar[d]^-{\circ}
\\\C(\kappabar(\odot_s\odot_t\bd a_{st}),d).
}
\]
In particular, the last two maps are now the counterclockwise direction in the associativity diagram we know commutes in $\C$.

We turn next to the clockwise direction in the desired associativity diagram.  Using the definition of the multiproduct in $U_\kappa\C$,
it unpacks as follows:
\[
\xymatrix{
\C(\kappabar\bd c,d)\times\C^n(\br{\kappabar\bd b_s},\bd c)\times\C^{j}(\odot_s\br{\kappabar\bd a_{st}},\odot_s\bd b_s)
\ar[d]^-{1\times1\times\odot_s\kappa_{j_s}\bd b_s}
\\\C(\kappabar\bd c,d)\times\C^n(\br{\kappabar\bd b_s},\bd c)\times\C^n(\br{\kappa_{j_s}\bd b_s\br{\kappabar\bd a_{st}}},\br{\kappabar\bd b_s})
\ar[d]^-{1\times\circ}
\\\C(\kappabar\bd c,d)\times\C^n(\br{\kappa_{j_s}\bd b_s\br{\kappabar\bd a_{st}}},\bd c)
\ar[d]^-{1\times\C^n(\br{\phi(\bd b_s,\br{\bd a_{st}})},1)}
\\\C(\kappabar\bd c,d)\times\C^n(\br{\kappabar(\odot_t\bd a_{st})},\bd c)
\ar[d]^-{1\times\kappa_n\bd c}
\\\C(\kappabar\bd c,d)\times\C(\kappa_n\bd c\br{\kappabar(\odot_t\bd a_{st})},\kappabar\bd c)
\ar[d]^-{\circ}
\\\C(\kappa_n\bd c\br{\kappabar(\odot_t\bd a_{st})},d)
\ar[d]^-{\C(\phi(\bd c,\br{\odot_t\bd a_{st}}),1)}
\\\C(\kappabar(\odot_s\odot_t\bd a_{st}),d).
}
\]
We can rewrite this direction also to delay the compositions to the end, 
which we omit and will paste on in the next display. We get the following:
\[
\xymatrix{
\C(\kappabar\bd c,d)\times\C^n(\br{\kappabar\bd b_s},\bd c)\times\C^{j}(\odot_s\br{\kappabar\bd a_{st}},\odot_s\bd b_s)
\ar[d]^-{1\times1\times\odot_s\kappa_{j_s}\bd b_s}
\\\C(\kappabar\bd c,d)\times\C^n(\br{\kappabar\bd b_s},\bd c)\times\C^n(\br{\kappa_{j_s}\bd b_s\br{\kappabar\bd a_{st}}},\br{\kappabar\bd b_s})
\ar[d]^-{1\times1\times\C^n(\br{\phi(\bd b_s,\br{\bd a_{st}})},1)}
\\\C(\kappabar\bd c,d)\times\C^n(\br{\kappabar\bd b_s},\bd c)\times\C^n(\br{\kappabar(\odot_t\bd a_{st})},\br{\kappabar\bd b_s})
\ar[d]^-{1\times\kappa_n\bd c\times\kappa_n\bd c}
\\\C(\kappabar\bd c,d)\times\C(\kappa_n\bd c\br{\kappabar\bd b_s},\kappabar\bd c)
\times\C(\kappa_n\bd c\br{\kappabar(\odot_t\bd a_{st})},\kappa_n\bd c\br{\kappabar\bd b_s})
\ar[d]^-{1\times1\times\C(\phi(\bd c,\br{\odot_t\bd a_{st}}),1)}
\\\C(\kappabar\bd c,d)\times\C(\kappa_n\bd c\br{\kappabar\bd b_s},\kappabar\bd c)
\times\C(\kappabar(\odot_s\odot_t\bd a_{st}),\kappa_n\bd c\br{\kappabar\bd b_s}).
}
\]
We now insert an extra arrow that has no effect on the composite, but allows us to connect the display to the associativity
diagram in $\C$: we paste the following to the bottom of the previous display, giving the clockwise direction in the desired diagram:
\[
\xymatrix{
\\\C(\kappabar\bd c,d)\times\C(\kappa_n\bd c\br{\kappabar\bd b_s},\kappabar\bd c)
\times\C(\kappabar(\odot_s\odot_t\bd a_{st}),\kappa_n\bd c\br{\kappabar\bd b_s}).
\ar[d]^-{1\times\C(\phi(\bd c,\br{\bd b_s}),1)\times\C(1,\phi(\bd c,\br{\bd b_s})^{-1})}
\\\C(\kappabar\bd c,d)\times\C(\kappabar(\odot_s\bd b_s),\kappabar\bd c)
\times\C(\kappabar(\odot_s\odot_t\bd a_{st}),\kappabar(\odot_s\bd b_s))
\ar[d]^-{1\times\circ}
\\\C(\kappabar\bd c,d)\times\C(\kappabar(\odot_s\odot_t\bd a_{st}),\kappabar\bd c)
\ar[d]^-{\circ}
\\\C(\kappabar(\odot_s\odot_t\bd a_{st}),d).
}
\]
Note that the first arrow in the above display composes with $\phi(\bd c,\br{\bd b_s})$ and its inverse, which then get
composed together at the next step.  The new arrow therefore has no effect on the composite, so we do actually still have the
same clockwise direction in our desired diagram, but the last two arrows now coincide with the clockwise direction
in the associativity diagram for $\C$.
It therefore suffices to show that the two directions coincide at the point that they reach the known associativity diagram.

Another benefit of rewriting the two directions to delay compositions to the end is that we can follow what happens to each
of the three factors in the beginning term
\[
\C(\kappabar\bd c,d)\times\C^n(\br{\kappabar\bd b_s},\bd c)\times\C^{j}(\odot_s\br{\kappabar\bd a_{st}},\odot_s\bd b_s)
\]
independently, since the two maps to the known associativity diagram are products of three maps from each of the three
factors.  
Starting with the first term $\C(\kappabar\bd c,d)$, we see that nothing happens to it in either direction,
so the two directions do coincide on that factor.  

The maps in either direction on the second factor consist of the same composite, namely
\[
\xymatrix{
\C^n(\br{\kappabar\bd b_s},\bd c)
\ar[r]^-{\kappa_n\bd c}
&\C(\kappa_n\bd c\br{\kappabar\bd b_s},\kappabar\bd c)
\ar[rr]^-{\C(\phi(\bd c,\br{\bd b_s}),1)}
&&\C(\kappabar(\odot_s\bd b_s),\kappabar\bd c).
}
\]
The two directions therefore coincide on that factor as well, reducing the issue to the restrictions to the third 
factor in either direction.  

On the third factor, the counterclockwise direction restricts as follows:
\[
\xymatrix{
\C^{j}(\odot_s\br{\kappabar\bd a_{st}},\odot_s\bd b_s)
\ar[d]^-{\kappa_j(\odot_s\bd b_s)}
\\\C(\kappa_j(\odot_s\bd b_s)(\odot_s\br{\kappabar\bd a_{st}}),\kappabar(\odot_s\bd b_s))
\ar[d]^-{\C(\phi(\odot_s\bd b_s,\br{\odot_t\bd a_{st}}),1)}
\\\C(\kappabar(\odot_s\odot_t\bd a_{st}),\kappabar(\odot_s\bd b_s)).
}
\]
However, the clockwise direction restricts as
\[
\xymatrix{
\C^{j}(\odot_s\br{\kappabar\bd a_{st}},\odot_s\bd b_s)
\ar[d]^-{\odot_s\kappa_{j_s}\bd b_s}
\\\C^n(\br{\kappa_{j_s}\bd b_s\br{\kappabar\bd a_{st}}},\br{\kappabar\bd b_s})
\ar[d]^-{\C^n(\br{\phi(\bd b_s,\br{\bd a_{st}})},1)}
\\\C^n(\br{\kappabar(\odot_t\bd a_{st})},\br{\kappabar\bd b_s})
\ar[d]^-{\kappa_n\bd c}
\\\C(\kappa_n\bd c\br{\kappabar(\odot_t\bd a_{st})},\kappa_n\bd c\br{\kappabar\bd b_s})
\ar[d]^-{\C(\phi(\bd c,\br{\bd a_{st}}),\phi(\bd c,\br{\bd b_s})^{-1})}
\\\C(\kappabar(\odot_s\odot_t\bd a_{st}),\kappabar(\odot_s\bd b_s)).
}
\]
Because of the functoriality of $\kappa_n\bd c$, we can rearrange the middle two arrows, so
the array becomes
\[
\xymatrix{
\C^{j}(\odot_s\br{\kappabar\bd a_{st}},\odot_s\bd b_s)
\ar[d]^-{\odot_s\kappa_{j_s}\bd b_s}
\\\C^n(\br{\kappa_{j_s}\bd b_s\br{\kappabar\bd a_{st}}},\br{\kappabar\bd b_s})
\ar[d]^-{\kappa_n\bd c}
\\\C(\kappa_n\bd c\br{\kappa_{j_s}\bd b_s\br{\kappabar\bd a_{st}}},\kappa_n\bd c\br{\kappabar\bd b_s})
\ar[d]^-{\C(\kappa_n\bd c\br{\phi(\bd b_s,\br{\bd a_{st}})},1)}
\\\C(\kappa_n\bd c\br{\kappabar(\odot_t\bd a_{st})},\kappa_n\bd c\br{\kappabar\bd b_s})
\ar[d]^-{\C(\phi(\bd c,\br{\odot_t\bd a_{st}}),\phi(\bd c,\br{\bd b_s})^{-1})}
\\\C(\kappabar(\odot_s\odot_t\bd a_{st}),\kappabar(\odot_s\bd b_s)),
}
\]
or even more compactly as
\[
\xymatrix{
\C^{j}(\odot_s\br{\kappabar\bd a_{st}},\odot_s\bd b_s)
\ar[d]^-{\gamma(\kappa_n\bd c;\br{\kappa_{j_s}\bd b_s})}
\\\C(\kappa_n\bd c\br{\kappa_{j_s}\bd b_s\br{\kappabar\bd a_{st}}},\kappa_n\bd c\br{\kappabar\bd b_s})
\ar[d]^-{\C(\kappa_n\bd c\br{\phi(\bd b_s,\br{\bd a_{st}})},1)}
\\\C(\kappa_n\bd c\br{\kappabar(\odot_t\bd a_{st})},\kappa_n\bd c\br{\kappabar\bd b_s})
\ar[d]^-{\C(\phi(\bd c,\br{\odot_t\bd a_{st}}),\phi(\bd c,\br{\bd b_s})^{-1})}
\\\C(\kappabar(\odot_s\odot_t\bd a_{st}),\kappabar(\odot_s\bd b_s)).
}
\]
The last two arrows are induced by isomorphisms in the operad $Y$, which are entirely determined by
their sources and targets, so lets just indicate them by isomorphism symbols $\cong$.  The same is true
for the second arrow from the counterclockwise direction.  Then asking that the two directions coincide is the
same as asking for the commutativity of the following square:
\[
\xymatrix@C+10pt{
\C^{j}(\odot_s\br{\kappabar\bd a_{st}},\odot_s\bd b_s)
\ar[d]_-{\kappa_j(\odot_s\bd b_s)}
\ar[r]^-{\gamma(\kappa_n\bd c;\br{\kappa_{j_s}\bd b_s})}
&\C(\kappa_n\bd c\br{\kappa_{j_s}\bd b_s\br{\kappabar\bd a_{st}}},\kappa_n\bd c\br{\kappabar\bd b_s})
\ar[d]^-{\cong}
\\\C(\kappa_j(\odot_s\bd b_s)(\odot_s\br{\kappabar\bd a_{st}}),\kappabar(\odot_s\bd b_s))
\ar[r]_-{\cong}
&\C(\kappabar(\odot_s\odot_t\bd a_{st}),\kappabar(\odot_s\bd b_s)).
}
\]

If we trace a typical element $f\in\C^j(\odot_s\br{\kappabar\bd a_{st}},\odot_s\bd b_s)$ around the square, we find that
we are asking for commutativity of the following diagram:
\[
\xymatrix@C+35pt{
&\kappa_n\bd c\br{\kappa_j\bd b_s(\odot_s\br{\kappabar\bd a_{st}})}
\ar[r]^-{\gamma(\kappa_n\bd c\br{\kappa_{j_s}\bd b_s})f}
&\kappa_n\bd c\br{\kappabar\bd b_s}
\\\kappabar(\odot_s\odot_t\bd a_{st})
\ar[ur]^-{\cong}
\ar[dr]_-{\cong}
\\&\kappa_j(\odot_s\bd b_s)(\odot_s\br{\kappabar\bd a_{st}})
\ar[uu]_-{\phi(\bd c,\odot_s\bd b_s)}^-{\cong}
\ar[r]_-{\kappa_j(\odot_s\bd b_s)f}
&\kappabar(\odot_s\bd b_s).
\ar[uu]_-{\phi(\bd c,\odot_s\bd b_s)}^-{\cong}
}
\]
The square commutes by naturality of $\phi(\bd c,\odot_s\bd b_s)$, and the triangle because it is
induced by a diagram in $Y$, however not in $Y(j)$, but rather in the degree of the total concatenation
$\odot_s\odot_t\bd a_{st}$.  The diagram therefore does commute, and the composition multiproduct
in $U_\kappa\C$ is associative.
\end{proof}

Continuing the verification of diagrams from \cite{EM1}, we turn to the unit diagrams, listed as (2) on p.\ 168 in \cite{EM1}.  We
begin with the first of them:

\begin{proposition}
The following unit diagram in $U_\kappa\C$ commutes:
\[
\xymatrix{
U_\kappa\C(\bd c;d)\times\{1\}^n
\ar[r]^-{\cong}
\ar[d]_-{\id\times1^n}
&U_\kappa\C(\bd c;d).
\\U_\kappa\C(\bd c;d)\times\prod_{s=1}^n U_\kappa\C(c_s;c_s)
\ar[ur]_-{\Gamma}
}
\]
\end{proposition}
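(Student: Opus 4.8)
The plan is to trace the $n$-tuple of identity morphisms through the definition of the composition multiproduct $\Gamma$ of Section \ref{structure} and then to collapse the resulting composite using the fact that $Y(n)$ has a unique morphism between any two of its objects. Concretely, I would fix $f\in U_\kappa\C(\bd c;d)=\C(\kappabar\bd c,d)$ and show that $\Gamma(f;\id_{c_1},\dots,\id_{c_n})=f$. In the notation of Section \ref{structure} each input tuple $\bd x_s$ is the singleton $(c_s)$, so $j_s=1$, $j=n$, and $\odot_s\bd x_s=\bd c$. Writing $\br{\id_{c_s}}$ for the element of $\C^n(\br{\kappabar(c_s)},\bd c)$ assembled from the identities and unwinding the four-step composite defining $\Gamma$ on $(f;\br{\id_{c_s}})$, one obtains
\[
\Gamma(f;\br{\id_{c_s}})=f\circ\kappa_n(\bd c)\br{\id_{c_s}}\circ\phi(\bd c,\br{c_s}),
\]
where $\phi(\bd c,\br{c_s})\colon\kappabar\bd c\to\kappa_n(\bd c)\br{\kappabar(c_s)}$ and $\kappa_n(\bd c)\br{\id_{c_s}}\colon\kappa_n(\bd c)\br{\kappabar(c_s)}\to\kappabar\bd c$. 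So the whole problem reduces to proving that $\kappa_n(\bd c)\br{\id_{c_s}}\circ\phi(\bd c,\br{c_s})=\id_{\kappabar\bd c}$.

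Next I would observe that both maps come from evaluating morphisms of $Y(n)$ on the object $\bd c$. By construction $\phi(\bd c,\br{c_s})$ is the action on $\bd c$ of the unique morphism $\kappa_n\bd c\to\gamma(\kappa_n\bd c;\br{\kappa_1(c_s)})$ in $Y(n)$. For the other factor, recall from Section \ref{structure} that $\id_{c_s}=\omega(c_s)(c_s)$ for the unique morphism $\omega(c_s)\colon\kappa_1(c_s)\to 1$ in $Y(1)$; invoking the compatibility of the $Y$-action on $\C$ with the operadic composition $\gamma$, together with the unit identity $\gamma(P;1,\dots,1)=P$, the image of $\br{\id_{c_s}}$ under the functor $\kappa_n(\bd c)\colon\C^n\to\C$ is precisely the action on $\bd c$ of the morphism
\[
\gamma(\id_{\kappa_n\bd c};\br{\omega(c_s)})\colon\gamma(\kappa_n\bd c;\br{\kappa_1(c_s)})\to\gamma(\kappa_n\bd c;\br{1})=\kappa_n\bd c
\]
in $Y(n)$. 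Hence $\kappa_n(\bd c)\br{\id_{c_s}}\circ\phi(\bd c,\br{c_s})$ is the action on $\bd c$ of an endomorphism of $\kappa_n\bd c$ in $Y(n)$, and since $Y(n)$ has a unique morphism between any two objects that endomorphism must be $\id_{\kappa_n\bd c}$, whose action on $\bd c$ is $\id_{\kappa_n(\bd c)(\bd c)}=\id_{\kappabar\bd c}$. This yields $\Gamma(f;\br{\id_{c_s}})=f\circ\id_{\kappabar\bd c}=f$, which is the asserted commutativity; the degenerate case $n=0$ is covered as well, since then the composite is $f$ trivially.

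The step I expect to be the main obstacle is the identification of $\kappa_n(\bd c)\br{\id_{c_s}}$ with the action of $\gamma(\id_{\kappa_n\bd c};\br{\omega(c_s)})$: this requires careful use both of the interchange between operadic composition in $Y$ and the natural transformations it induces under the action on $\C$, and of the unit axiom recognizing $\gamma(\kappa_n\bd c;1,\dots,1)$ as $\kappa_n\bd c$. Once that bookkeeping is complete, commutativity is forced by the same mechanism that governs the rest of this section: $Y(n)$ is a preorder with unique morphisms, so every endomorphism in it is an identity.
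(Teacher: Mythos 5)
Your proposal is correct and follows essentially the same route as the paper: both trace the identity elements $\omega(c_s)(c_s)$ through the composite defining $\Gamma$, identify $\kappa_n(\bd c)\br{\id_{c_s}}$ as the action on $\bd c$ of the map in $Y(n)$ induced by the $\omega(c_s)$, and then collapse the composite with $\phi(\bd c,\br{c_s})$ by uniqueness of morphisms in $Y(n)$ (the paper phrases this as that map being the inverse of $\phi(\bd c,\br{c_s})$, which is the same mechanism as your ``every endomorphism in $Y(n)$ is an identity''). No gaps to report.
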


\begin{proof}
Expanding the counterclockwise direction in the diagram using the definitions of the structure maps, we have
\[
\xymatrix{
\C(\kappabar\bd c,d)\times\{1\}^n
\ar[d]^-{\id\times1^n}
\\\C(\kappabar\bd c,d)\times\prod_{s=1}^n\C(\kappabar c_s,c_s)
\ar[d]^-{=}
\\\C(\kappabar\bd c,d)\times\C^n(\br{\kappabar c_s},\bd c)
\ar[d]^-{\id\times\kappa_n\bd c}
\\\C(\kappabar\bd c,d)\times\C(\kappa_n\bd c\br{\kappabar c_s},\kappabar\bd c)
\ar[d]^-{\id\times\C(\phi(\bd c,\br{c_s}),1)}
\\\C(\kappabar\bd c,d)\times\C(\kappabar\bd c,\kappabar\bd c)
\ar[d]^-{\circ}
\\\C(\kappabar\bd c,d).
}
\]
Since we want this to coincide with the canonical projection, and nothing happens to the first factor until the composition
at the end, it suffices to trace the second factor and see that it lands at $\id_{\kappabar\bd c}\in\C(\kappabar\bd c,\kappabar\bd c)$
at the next to the last step.

To see this, note first that the 1's get sent to the maps induced by the unique maps $\omega(c_s):\kappa_1(c_s)\to c_s$ in $Y(1)$, by definition
of the unit map.  Next, observe that
\[
\kappa_n\bd c\br{\kappabar c_s}=\gamma(\kappa_n\bd c;\br{\kappa_1c_s})\bd c,
\]
and the map from there to $\kappabar\bd c$ is the one induced by the maps $\omega(c_s):\kappa_1(c_s)\to 1$ in $Y(1)$.  But this also gives
the inverse to the unique isomorphism
\[
\phi(\bd c,\br{c_s}):\kappa_n\bd c\to\gamma(\kappa_n\bd c;\br{\kappa_1c_s}),
\]
so we do end up at the identity element at the next to the last step.  The claimed diagram therefore does commute.
\end{proof}

We next verify the second unit diagram.

\begin{proposition}
The unit diagram
\[
\xymatrix{
\{1\}\times U_\kappa\C(\bd c;d)
\ar[r]^-{\cong}
\ar[d]_-{1\times\id}
&U_\kappa\C(\bd c;d)
\\U_\kappa\C(d;d)\times U_\kappa\C(\bd c;d)
\ar[ur]_-{\Gamma}
}
\]
commutes.
\end{proposition}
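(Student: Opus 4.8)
The plan is to mimic the proof of the preceding (first) unit proposition: expand the counterclockwise composite $\Gamma\circ(1\times\id)$ using the definition of the multiproduct in $U_\kappa\C$ from Section \ref{structure}, specialized to $n=1$, final target $z=d$, one-tuple $\bd y=(d)$, and $\bd x_1=\bd c$, and then trace an arbitrary element $(\id_d,g)$ with $g\in U_\kappa\C(\bd c;d)=\C(\kappabar\bd c,d)$, showing it comes out as $g$. The horizontal isomorphism in the diagram also sends $(1,g)$ to $g$, so this suffices.

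Concretely, after the trivial identification $\C(\kappabar\bd c,d)=\C^1(\br{\kappabar\bd c},(d))$ and the map $1\times\kappa_1(d)$, the pair $(\id_d,g)$ becomes $(\id_d,\kappa_1(d)(g))$, where $\kappa_1(d)$ denotes the functor $\C\to\C$ induced by the object $\kappa_1(d)\in Y(1)$; composing gives $\id_d\circ\kappa_1(d)(g)\colon\kappa_1(d)(\kappabar\bd c)\to d$. I would then recall from Section \ref{structure} that $\id_d=\omega(d)(d)$ for the unique isomorphism $\omega(d)\colon\kappa_1(d)\to1$ in $Y(1)$, and use naturality of the induced natural transformation $\omega(d)$ between the corresponding functors $\C\to\C$, at the morphism $g$, to rewrite this composite as $g\circ\omega(d)(\kappabar\bd c)$. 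Since the last map of the expansion precomposes with $\phi((d),\br{\bd c})$, the whole counterclockwise composite sends $(\id_d,g)$ to $g\circ\bigl(\omega(d)(\kappabar\bd c)\circ\phi((d),\br{\bd c})\bigr)$.

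It then remains to show $\omega(d)(\kappabar\bd c)\circ\phi((d),\br{\bd c})=\id_{\kappabar\bd c}$, and this is where the argument reuses the idea from the first unit proposition and from the $\Sigma_n$-equivariance proof. Writing $j$ for the length of $\bd c$, so $\kappabar\bd c=\kappa_j(\bd c)(\bd c)$, the associativity of the action of the operad $Y$ on $\C$ identifies $\kappa_1(d)(\kappabar\bd c)$ with $\gamma(\kappa_1(d);\kappa_j(\bd c))(\bd c)$ and identifies $\omega(d)(\kappabar\bd c)$ with the effect on $\bd c$ of the $Y(j)$-morphism $\gamma(\omega(d);\kappa_j(\bd c))\colon\gamma(\kappa_1(d);\kappa_j(\bd c))\to\gamma(1;\kappa_j(\bd c))=\kappa_j(\bd c)$ (the operadic composite of $\omega(d)$ with the identity of $\kappa_j(\bd c)$); while $\phi((d),\br{\bd c})$ is by construction the effect on $\bd c$ of the unique isomorphism $\kappa_j(\bd c)\to\gamma(\kappa_1(d);\kappa_j(\bd c))$ in $Y(j)$. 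Their composite is therefore the effect on $\bd c$ of an endomorphism of $\kappa_j(\bd c)$ in $Y(j)$, which must be the identity because all diagrams in $Y(j)$ commute. Hence the counterclockwise composite sends $(\id_d,g)$ to $g$, and the diagram commutes.

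The one genuinely delicate step — the one I would be most careful about — is the identification of $\omega(d)(\kappabar\bd c)$, a priori "the $Y(1)$-morphism $\omega(d)$ evaluated at the $\C$-object $\kappabar\bd c$," with a $Y(j)$-morphism evaluated at $\bd c$; this is precisely the compatibility of the $Y$-action with operadic composition, and once it is invoked everything collapses via "all diagrams in $Y(j)$ commute," exactly as in the earlier structural verifications. It is also worth keeping in mind that in the general (non-constant $\kappa$) setting $\kappa_1(d)$ need not be the unit of $Y(1)$, so $\kappabar(d)\ne d$ in general and the maps $\omega$ and $\phi$ are genuinely doing the work of absorbing these discrepancies.
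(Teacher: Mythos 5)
Your proof is correct and takes essentially the same route as the paper's: both arguments come down to naturality of $\omega(d)$ together with the identification, via uniqueness of morphisms in $Y$, of $\phi((d),\br{\bd c})$ with the operadic composite $\gamma(\omega(d)^{-1};\kappa_j(\bd c))$ evaluated at $\bd c$ (equivalently, $\omega(d)^{-1}$ applied at $\kappabar\bd c$). The paper merely orders the two steps differently, first rewriting $\phi(d,\bd c)$ as $\omega(d)^{-1}$ at $\kappabar\bd c$ and then checking the naturality square for a typical element $f$, whereas you apply naturality first and then collapse $\omega(d)(\kappabar\bd c)\circ\phi((d),\br{\bd c})$ to the identity.
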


\begin{proof}
Again we unpack the counterclockwise direction using the definitions, and we get
\[
\xymatrix{
\{1\}\times\C(\kappabar\bd c,d)
\ar[d]^-{1\times\id}
\\\C(\kappabar d,d)\times\C(\kappabar\bd c,d)
\ar[d]^-{\id\times\kappa_1d}
\\\C(\kappabar d,d)\times\C(\kappa_1d(\kappabar\bd c),\kappabar d)
\ar[d]^-{1\times\C(\phi(d,\bd c),1)}
\\\C(\kappabar d,d)\times\C(\kappabar\bd c,\kappabar d)
\ar[d]^-{\circ}
\\\C(\kappabar\bd c,d).
}
\]
Looking at what happens before the final composition, the initial $\{1\}$ gets sent to the canonical
map $\omega(d):\kappabar d\to d$ in $\C(\kappabar d,d)$.  We can delay the composition in order to 
insert maps induced by $\omega(d)$ and its inverse as follows, without affecting the outcome:
\[
\xymatrix{
\{1\}\times\C(\kappabar\bd c,d)
\ar[d]^-{1\times\id}
\\\C(\kappabar d,d)\times\C(\kappabar\bd c,d)
\ar[d]^-{\id\times\kappa_1d}
\\\C(\kappabar d,d)\times\C(\kappa_1d(\kappabar\bd c),\kappabar d)
\ar[d]^-{1\times\C(\phi(d,\bd c),1)}
\\\C(\kappabar d,d)\times\C(\kappabar\bd c,\kappabar d)
\ar[d]^-{\C(\omega(d)^{-1},1)\times\C(1,\omega(d))}
\\\C(d,d)\times\C(\kappabar\bd c,d)
\ar[d]^-{\circ}
\\\C(\kappabar\bd c,d).
}
\]
But since the first map sends 1 to $\omega(d)$,
now the initial $\{1\}$ ends up at $\id_d\in\C(d,d)$, so the issue is making sure that the second factor
$\C(\kappabar\bd c,d)$ has its identity as the total map.  
Tracing a typical element $f\in\C(\kappabar\bd c,d)$, so $f:\kappabar\bd c\to d$, we see that the issue
is whether the following square commutes:
\[
\xymatrix@C+10pt{
\kappa_1d(\kappabar\bd c)
\ar[r]^-{\kappa_1d(f)}
&\kappabar d
\ar[d]^-{\omega(d)}
\\\kappabar\bd c
\ar[u]^-{\phi(d,\bd c)}
\ar[r]_-{f}
&d.
}
\]
Now $\phi(d,\bd c)$ arises from the unique isomorphism $\kappa_n\bd c\to\gamma(\kappa_1d;\kappa_n\bd c)$ in $Y(n)$,
but this can be expressed as 
\[
\gamma(\omega(d)^{-1};\kappa_n\bd c):\kappa_n\bd c=\gamma(1;\kappa_n\bd c)\to\gamma(\kappa_1d;\kappa_n\bd c)
\]
by the uniqueness of morphisms
in $Y(n)$.  This means that $\phi(d,\bd c)$ is really just $\omega(d)^{-1}$ applied to the object $\kappabar\bd c$,
so we might as well express the diagram as
\[
\xymatrix@C+10pt{
\kappa_1d(\kappabar\bd c)
\ar[r]^-{\kappa_1d(f)}
&\kappabar d
\ar[d]^-{\omega(d)}
\\\kappabar\bd c
\ar[u]^-{\omega(d)^{-1}}
\ar[r]_-{f}
&d.
}
\]
Since $\omega(d)$ is a natural isomorphism, the square commutes by naturality.  The second unit diagram therefore commutes.
\end{proof}

We next verify the commutativity of the equivariance diagram labeled (4) on p.\ 169 in \cite{EM1}.

\begin{theorem}
Let $\sigma\in\Sigma_n$.  Then
the following equivariance diagram in $U_\kappa\C$ commutes:
\[
\xymatrix{
U_\kappa\C(\bd c;d)\times\prod_{s=1}^n U_\kappa\C(\bd b_s;c_s)
\ar[r]^-{\Gamma}
\ar[d]_-{\sigma^*\times\sigma^{-1}}
&U_\kappa\C(\odot_s\bd b_s;d)
\ar[d]^-{\sigma\br{j_{\sigma(1)},\dots,j_{\sigma(n)}}^*}
\\U_\kappa\C(\sigma^{-1}\bd c,d)\times\prod_{s=1}^n U_\kappa\C(\bd b_{\sigma(s)},c_{\sigma(s)})
\ar[r]_-{\Gamma}
&U_\kappa\C(\odot_s\bd b_{\sigma(s)},d).
}
\]
\end{theorem}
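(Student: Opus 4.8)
The plan is to unpack both legs of the equivariance square using the definitions of $\sigma^*$ and of the multiproduct $\Gamma$ from Section~\ref{structure}, trace a generic element $(f,\br{g_s})\in\C(\kappabar\bd c,d)\times\prod_s\C(\kappabar\bd b_s,c_s)$, and show that the two resulting morphisms $\kappabar(\odot_s\bd b_{\sigma(s)})\to d$ in $\C$ coincide. Write $\rho=\sigma\br{j_{\sigma(1)},\dots,j_{\sigma(n)}}\in\Sigma_j$; by the block-permutation convention, $\rho$ is the element with $\rho^{-1}(\odot_s\bd b_s)=\odot_s\bd b_{\sigma(s)}$. Chasing clockwise (first $\Gamma$, then $\rho^*$), the element $(f,\br{g_s})$ is sent to
\[
f\circ\kappa_n(\bd c)\br{g_s}\circ\phi(\bd c,\br{\bd b_s})\circ\theta(\odot_s\bd b_s,\rho),
\]
while chasing counterclockwise (first $\sigma^*\times\sigma^{-1}$, then $\Gamma$ applied to the data $\sigma^{-1}\bd c$ and $\br{\bd b_{\sigma(s)}}$) it is sent to
\[
f\circ\theta(\bd c,\sigma)\circ\kappa_n(\sigma^{-1}\bd c)\br{g_{\sigma(s)}}\circ\phi(\sigma^{-1}\bd c,\br{\bd b_{\sigma(s)}}).
\]
Both composites begin with the common factor $f$, so it suffices to prove that the two morphisms $\kappabar(\odot_s\bd b_{\sigma(s)})\to\kappabar\bd c$ obtained by deleting $f$ are equal.

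Next I would interpolate the objects $P=\kappa_n(\sigma^{-1}\bd c)\br{\kappabar\bd b_{\sigma(s)}}$ and $Q=\kappa_n(\bd c)\br{\kappabar\bd b_s}$; the right-action conventions give $(\kappa_n(\bd c)\cdot\sigma)\br{\kappabar\bd b_{\sigma(s)}}=Q$ and, applied to the morphism $\br{g_{\sigma(s)}}$, $(\kappa_n(\bd c)\cdot\sigma)\br{g_{\sigma(s)}}=\kappa_n(\bd c)\br{g_s}$. Since $\theta(\bd c,\sigma)$ is a morphism of $Y(n)$, hence a natural transformation $\kappa_n(\sigma^{-1}\bd c)\Rightarrow\kappa_n(\bd c)\cdot\sigma$, its naturality square at $\br{g_{\sigma(s)}}$ yields
\[
\theta(\bd c,\sigma)\circ\kappa_n(\sigma^{-1}\bd c)\br{g_{\sigma(s)}}=\kappa_n(\bd c)\br{g_s}\circ\bigl(\theta(\bd c,\sigma)\text{ at }\br{\kappabar\bd b_{\sigma(s)}}\bigr).
\]
Substituting this, the counterclockwise morphism becomes $\kappa_n(\bd c)\br{g_s}$ postcomposed with the structural isomorphism $\kappabar(\odot_s\bd b_{\sigma(s)})\xrightarrow{\phi(\sigma^{-1}\bd c,\br{\bd b_{\sigma(s)}})}P\to Q$, and the clockwise morphism is already $\kappa_n(\bd c)\br{g_s}$ postcomposed with $\kappabar(\odot_s\bd b_{\sigma(s)})\xrightarrow{\theta(\odot_s\bd b_s,\rho)}\kappabar(\odot_s\bd b_s)\xrightarrow{\phi(\bd c,\br{\bd b_s})}Q$. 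Thus the whole problem reduces to the equality of those two isomorphisms $\kappabar(\odot_s\bd b_{\sigma(s)})\to Q$.

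Finally, each of those two isomorphisms is the image under the $Y$-action of a morphism of $Y(j)$ (or of an operad composite of such) applied to the object $\odot_s\bd b_{\sigma(s)}$: the counterclockwise one comes from $\kappa_j(\odot_s\bd b_{\sigma(s)})\to\gamma(\kappa_n(\sigma^{-1}\bd c);\br{\kappa_{j_{\sigma(s)}}\bd b_{\sigma(s)}})\to\gamma(\kappa_n(\bd c)\cdot\sigma;\br{\kappa_{j_{\sigma(s)}}\bd b_{\sigma(s)}})$, and the clockwise one from $\kappa_j(\odot_s\bd b_{\sigma(s)})\to\kappa_j(\odot_s\bd b_s)\cdot\rho\to\gamma(\kappa_n(\bd c);\br{\kappa_{j_s}\bd b_s})\cdot\rho$. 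The right-equivariance axiom of the operad $Y$ identifies $\gamma(\kappa_n(\bd c)\cdot\sigma;\br{\kappa_{j_{\sigma(s)}}\bd b_{\sigma(s)}})$ with $\gamma(\kappa_n(\bd c);\br{\kappa_{j_s}\bd b_s})\cdot\rho$ — which is exactly why the block permutation $\rho=\sigma\br{j_{\sigma(1)},\dots,j_{\sigma(n)}}$ is the one appearing in the statement — so the two $Y(j)$-morphisms have the same source and target, hence are equal because all morphisms in $Y(j)$ are unique and the $Y$-action is a $\Sigma_j$-equivariant functor, just as in the proof that $\sigma^*$ defines a right action. I expect the only real obstacle to be organizational: keeping the reindexing by $\sigma$ and the block permutation $\rho$ straight and confirming that the objects $P$, $Q$, $\kappabar(\odot_s\bd b_s)$, and $\kappabar(\sigma^{-1}\bd c)$ are the correct ones at each stage; once the operad right-equivariance axiom is invoked there is no further content. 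This verifies the equivariance diagram and completes the proof that $U_\kappa\C$ is a multicategory.
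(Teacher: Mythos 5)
Your proposal is correct and, despite being organized as an element chase rather than the paper's pasting of an auxiliary map $p$ into a large diagram, it rests on exactly the same ingredients as the paper's proof: cancelling the common factor $f$, the naturality of $\theta(\bd c,\sigma)$ as a transformation $\kappa_n(\sigma^{-1}\bd c)\Rightarrow\kappa_n(\bd c)\cdot\sigma$ (together with $(\kappa_n(\bd c)\cdot\sigma)\br{g_{\sigma(s)}}=\kappa_n(\bd c)\br{g_s}$), the first operad equivariance formula identifying $\gamma(\kappa_n(\bd c)\cdot\sigma;\br{\kappa_{j_{\sigma(s)}}\bd b_{\sigma(s)}})$ with $\gamma(\kappa_n(\bd c);\br{\kappa_{j_s}\bd b_s})\cdot\sigma\br{j_{\sigma(1)},\dots,j_{\sigma(n)}}$, and the uniqueness of morphisms in $Y(j)$ transported through the equivariant action. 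This matches the paper's argument in substance, so no gap to report.
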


\begin{proof}
The counterclockwise direction unpacks as follows:
\[
\xymatrix{
\C(\kappabar\bd c,d)\times\C^n(\br{\kappabar\bd b_s},\bd c)
\ar[d]^-{\sigma^*\times\sigma^{-1}}
\\\C(\kappabar(\sigma^{-1}\bd c),d)\times\C^n(\br{\kappabar\bd b_{\sigma(s)}},\sigma^{-1}\bd c)  
\ar[d]^-{1\times\kappa_n(\sigma^{-1}\bd c)}
\\\C(\kappabar(\sigma^{-1}\bd c),d)\times\C(\kappa_n(\sigma^{-1}\bd c)\br{\kappabar\bd b_{\sigma(s)}},\kappabar(\sigma^{-1}\bd c))
\ar[d]^-{\circ}
\\\C(\kappa_n(\sigma^{-1}\bd c)\br{\kappabar\bd b_{\sigma(s)}},d)
\ar[d]^-{\C(\phi(\sigma^{-1}\bd c,\br{\bd b_{\sigma(s)}}),1)}
\\\C(\kappabar(\odot_s\bd b_{\sigma(s)}),d).
}
\]

The clockwise direction unpacks as follows:
\[
\xymatrix{
\C(\kappabar\bd c,d)\times\C^n(\br{\kappabar\bd b_s},\bd c)
\ar[d]^-{1\times\kappa_n\bd c}
\\\C(\kappabar\bd c,d)\times\C(\kappa_n\bd c\br{\kappabar\bd b_s},\kappabar\bd c)
\ar[d]^-{\circ}
\\\C(\kappa_n\bd c\br{\kappabar\bd b_s},d)
\ar[d]^-{\C(\phi(\bd c,\br{\bd b_s}),1)}
\\\C(\kappabar(\odot_s\bd b_s),d)
\ar[d]^-{=}
\\\C([\kappa_j(\odot_s\bd b_s)\cdot\sigma\br{j_{\sigma(1)},\dots,j_{\sigma(n)}}](\odot_s\bd b_{\sigma(s)}),d)
\ar[d]^-{\C(\theta(\odot_s\bd b_s,\sigma\br{j_{\sigma(1)},\dots,j_{\sigma(n)}}),1)}
\\\C(\kappabar(\odot_s\bd b_{\sigma(s)}),d).
}
\]

To begin connecting the two directions, we first decompose the second arrow in the last display, labeled $\circ$ for composition,
as follows: 
\[
\xymatrix{
\C(\kappabar\bd c,d)\times\C(\kappa_n\bd c\br{\kappabar\bd b_s},\kappabar\bd c)
\ar[d]^-{=}
\\\C([\kappa_n\bd c\cdot\sigma]\sigma^{-1}\bd c,d)\times\C(\kappa_n\bd c\br{\kappabar\bd b_s},[\kappa_n\bd c\cdot\sigma]\sigma^{-1}\bd c)
\ar[d]^-{\C(\theta(\bd c,\sigma),1)\times\C(1,\theta(\bd c,\sigma)^{-1})}
\\\C(\kappabar(\sigma^{-1}\bd c),d)\times\C(\kappa_n\bd c\br{\kappabar\bd b_s},\kappabar(\sigma^{-1}\bd c))
\ar[d]^-{\circ}
\\\C(\kappa_n\bd c\br{\kappabar\bd b_s},d).
}
\]
Since we are composing in one factor with $\theta(\bd c,\sigma)$ and in the other with its inverse, the composition map remains
the same.

Now we connect the two directions by means of the following diagram, which we claim commutes, where the map $p$ is defined below:  
\[
\xymatrix@C-80pt@R-12pt{
C(\kappabar\bd c,d)\times\C^n(\br{\kappabar\bd b_s},\bd c)
\ar[dd]_-{\sigma^*\times\sigma^{-1}}
\ar[dr]^-{1\times\kappa_n\bd c}
\\&\C(\kappabar\bd c,d)\times\C(\kappa_n\bd c\br{\kappabar\bd b_s},\kappabar\bd c)
\ar[dd]^-{=}
\\\C(\kappabar(\sigma^{-1}\bd c),d)\times\C^n(\br{\kappabar\bd b_{\sigma(s)}},\sigma^{-1}\bd c) 
\ar[dd]_-{1\times\kappa_n(\sigma^{-1}\bd c)}
\\&\C([\kappa_n\bd c\cdot\sigma]\sigma^{-1}\bd c,d)\times\C(\kappa_n\bd c\br{\kappabar\bd b_s},[\kappa_n\bd c\cdot\sigma]\sigma^{-1}\bd c)
\ar[dd]^-{\C(\theta(\bd c,\sigma),1)\times\C(1,\theta(\bd c,\sigma)^{-1})}
\\\C(\kappabar(\sigma^{-1}\bd c),d)\times\C(\kappa_n(\sigma^{-1}\bd c)\br{\kappabar\bd b_{\sigma(s)}},\kappabar(\sigma^{-1}\bd c))
\ar[dr]_-{1\times\C(p,1)\,\,\,\,\,\,\,\,\,\,\,\,}
\ar[dd]_-{\circ}
\\&\C(\kappabar(\sigma^{-1}\bd c),d)\times\C(\kappa_n\bd c\br{\kappabar\bd b_s},\kappabar(\sigma^{-1}\bd c))
\ar[d]^-{\circ}
\\\C(\kappa_n(\sigma^{-1}\bd c)\br{\kappabar\bd b_{\sigma(s)}},d)
\ar[r]^-{\C(p,1)}
\ar[ddd]_-{\C(\phi(\sigma^{-1}\bd c,\br{\bd b_{\sigma(s)}}),1)}
&\C(\kappa_n\bd c\br{\kappabar\bd b_s},d)
\ar[d]^-{\C(\phi(\bd c,\br{\bd b_s}),1)}
\\\relax
&\C(\kappabar(\odot_s\bd b_s),d)
\ar[d]^-{=}
\\&\C([\kappa_j(\odot_s\bd b_s)\cdot\sigma\br{j_{\sigma(1)},\dots,j_{\sigma(n)}}](\odot_s\bd b_{\sigma(s)}),d)
\ar[dl]^-{\,\,\,\,\,\,\,\,\,\,\,\,\,\,\,\,\,\,\,\,\,\,\,\,\,\,\,\,\,\,\,\,\,\,\,\C(\theta(\odot_s\bd b_s,\sigma\br{j_{\sigma(1)},\dots,j_{\sigma(n)}}),1)}
\\\C(\kappabar(\odot_s\bd b_{\sigma(s)}),d)
}
\]
Here $p$ is induced by the unique
isomorphism
\[
\gamma(\kappa_n\bd c;\br{\kappa_{j_s}\bd b_s})\cdot\sigma\br{j_{\sigma(1)},\dots,j_{\sigma(n)}}
\to\gamma(\kappa_n(\sigma^{-1}\bd c);\br{\kappa_{j_{\sigma(s)}}\bd b_s})
\]
in $Y(j)$, which in turn arises as the composite
\[
\xymatrix{
\gamma(\kappa_n\bd c;\br{\kappa_{j_s}\bd b_s})\cdot\sigma\br{j_{\sigma(1)},\dots,j_{\sigma(n)}}
\ar[d]^-{=}
\\\gamma(\kappa_n\bd c\cdot\sigma\cdot\sigma^{-1};\br{\kappa_{j_s}\bd b_s})\cdot\sigma\br{j_{\sigma(1)},\dots,j_{\sigma(n)}}
\ar[d]^-{=}
\\\gamma(\kappa_n\bd c\cdot\sigma;\br{\kappa_{j_{\sigma(s)}}\bd b_{\sigma(s)}})\cdot\sigma^{-1}\br{j_1,\dots,j_n}\cdot\sigma\br{j_{\sigma(1)},\dots,j_{\sigma(n)}}
\ar[d]^-{=}
\\\gamma(\kappa_n\bd c\cdot\sigma;\br{\kappa_{j_{\sigma(s)}}\bd b_{\sigma(s)}})
\ar[d]^-{\gamma(\theta(\bd c,\sigma)^{-1};1)}
\\\gamma(\kappa_n(\sigma^{-1}\bd c);\br{\kappa_{j_{\sigma(s)}}\bd b_{\sigma(s)}}),
}
\]
where the second equality arises from the first equivariance formula for an operad in \cite{G}, Definition 1.1(c) on page 2.
We are therefore also justified in writing $p$ as $\theta(\bd c,\sigma)^{-1}$.

The bottom pentagon commutes, since each map is induced by an isomorphism in $Y(j)$, where all diagrams commute.
The middle (distorted) square commutes since the horizontal arrows are given by the same map $p$.  This leaves
the top hexagon to check, and we can do so on each factor separately,  
since the hexagon is actually the product of two separate hexagons.
On the first factor $\C(\kappabar\bd c, d)$, the
counterclockwise direction is given by the map $\sigma^*$.  But this is defined to be induced by $\theta(\bd c, \sigma)$,
which is what induces the clockwise direction.  The top portion therefore commutes on the first factor, thus reducing the question
to its commutativity on the second factor.  This is captured in the perimeter of the following diagram:
\[
\xymatrix{
&\C^n(\br{\kappabar\bd b_s},\bd c)
\ar[dl]_-{\sigma^{-1}}
\ar[dr]^-{\kappa_n\bd c}
\\\C^n(\br{\kappabar\bd b_{\sigma(s)}},\sigma^{-1}\bd c)
\ar[d]_-{\kappa_n(\sigma^{-1}\bd c)}
\ar[rr]^-{\kappa_n\bd c\cdot\sigma}
&&\C(\kappa_n\bd c\br{\kappabar\bd b_s},\kappabar\bd c)
\ar[d]^-{\C(1,\theta(\bd c,\sigma)^{-1})}
\\\C(\kappa_n(\sigma^{-1}\bd c)\br{\kappabar\bd b_{\sigma(s)}},\kappabar(\sigma^{-1}\bd c))
\ar[rr]_-{\C(p,1)}
&&\C(\kappa_n\bd c\br{\kappabar\bd b_s},\kappabar(\sigma^{-1}\bd c)).
}
\]
The triangle on top commutes because $\kappa_n\bd c\cdot\sigma$ is just the composition of $\kappa_n\bd c$ with the
permutation given by $\sigma$, so composing with $\sigma^{-1}$ just gives $\kappa_n\bd c$.  This reduces us to verifying
commutativity of the bottom square.  But we can rewrite the bottom square as follows:
\[
\xymatrix@C-50pt{
&\C^n(\br{\kappabar\bd b_{\sigma(s)}},\sigma^{-1}\bd c)
\ar[dl]_-{\kappa_n(\sigma^{-1}\bd c)}
\ar[dr]^-{\kappa_n\bd c\cdot\sigma}
\\\C(\kappa_n(\sigma^{-1}\bd c)\br{\kappabar\bd b_{\sigma(s)}},\kappabar(\sigma^{-1}\bd c))
\ar[dr]_-{\C(\theta(\bd c,\sigma)^{-1},1)\,\,\,\,\,}
\ar[rr]^-{\C(\theta(\bd c,\sigma)^{-1},\theta(\bd c,\sigma))}
&&\C(\kappa_n\bd c\cdot\sigma\br{\kappabar\bd b_{\sigma(s)}},\kappa_n\cdot\sigma(\sigma^{-1}\bd c))
\ar[dl]^-{\,\,\,\,\,\C(1,\theta(\bd c,\sigma)^{-1})}
\\&\C(\kappa_n\bd c\cdot\sigma\br{\kappabar\bd b_{\sigma(s)}},\kappabar(\sigma^{-1}\bd c))
\ar[d]^-{=}
\\&\C(\kappa_n\bd c\br{\kappabar\bd b_s},\kappabar(\sigma^{-1}\bd c)).
}
\]
The top triangle commutes by the naturality of $\theta(\bd c,\sigma)$, and the bottom triangle by inspection.  This
concludes the verification of the first equivariance diagram.
\end{proof}

The verification that $U_\kappa\C$ satisfies the requirements for a multicategory concludes with the commutativity
of the second equivariance diagram, labeled (4) on p.\ 169 of \cite{EM1}.  We suppose given permutations $\tau_s\in\Sigma_{j_s}$
for $1\le s\le n$.

\begin{proposition}
The following equivariance diagram commutes:
\[
\xymatrix{
U_\kappa\C(\bd c;d)\times\prod_{s=1}^n U_\kappa\C(\bd b_s;c_s)
\ar[r]^-{\Gamma}
\ar[d]_-{1\times\prod\tau_s^*}
&U_\kappa\C(\odot_s\bd b_s,d)
\ar[d]^-{(\oplus_s\tau_s)^*}
\\U_\kappa\C(\bd c;d)\times\prod_{s=1}^n U_\kappa\C(\tau_s^{-1}\bd b_s,c_s)
\ar[r]_-{\Gamma}
&U_\kappa\C(\odot_s\tau_s^{-1}\bd b_s;d).
}
\]
\end{proposition}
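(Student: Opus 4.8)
The plan is to follow the template of the preceding equivariance proof: unpack both legs of the square using the definitions of the multiproduct $\multprod$ and the action $\sigma^*$ from Section~\ref{structure}, and reduce commutativity to a single identity between morphisms of $\C$, each of which is induced by a morphism of some $Y(k)$, where all diagrams commute. This case is lighter than the first equivariance diagram, since the target tuple $\bd c$ and a chosen morphism $g\colon\kappabar\bd c\to d$ are never disturbed. Tracing a typical element $(g,\br{f_s})$ with $f_s\colon\kappabar\bd b_s\to c_s$: the clockwise leg first forms $g\circ\kappa_n(\bd c)\br{f_s}\circ\phi(\bd c,\br{\bd b_s})\colon\kappabar(\odot_s\bd b_s)\to d$ and then precomposes with $\theta(\odot_s\bd b_s,\oplus_s\tau_s)$, using that $(\oplus_s\tau_s)^{-1}(\odot_s\bd b_s)=\odot_s\tau_s^{-1}\bd b_s$; the counterclockwise leg first replaces each $f_s$ by $f_s\circ\theta(\bd b_s,\tau_s)$ (this being $\tau_s^*$) and then applies $\multprod$ for the tuples $\tau_s^{-1}\bd b_s$, yielding $g\circ\kappa_n(\bd c)\br{f_s\circ\theta(\bd b_s,\tau_s)}\circ\phi(\bd c,\br{\tau_s^{-1}\bd b_s})\colon\kappabar(\odot_s\tau_s^{-1}\bd b_s)\to d$.

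By functoriality of $\kappa_n(\bd c)\colon\C^n\to\C$ we have $\kappa_n(\bd c)\br{f_s\circ\theta(\bd b_s,\tau_s)}=\kappa_n(\bd c)\br{f_s}\circ\kappa_n(\bd c)\br{\theta(\bd b_s,\tau_s)}$, so the two legs agree as soon as
\[
\phi(\bd c,\br{\bd b_s})\circ\theta(\odot_s\bd b_s,\oplus_s\tau_s)=\kappa_n(\bd c)\br{\theta(\bd b_s,\tau_s)}\circ\phi(\bd c,\br{\tau_s^{-1}\bd b_s})
\]
as morphisms $\kappabar(\odot_s\tau_s^{-1}\bd b_s)\to\kappa_n(\bd c)\br{\kappabar\bd b_s}$. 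Both sides arise by applying the action functor $Y(j)\to\Cat(\C^j,\C)$ to a morphism of $Y(j)$ with source $\kappa_j(\odot_s\tau_s^{-1}\bd b_s)$ and target $\gamma(\kappa_n\bd c;\br{\kappa_{j_s}\bd b_s})\cdot(\oplus_s\tau_s)$, and then evaluating at $\odot_s\tau_s^{-1}\bd b_s$: for the left side one uses that $\theta(\odot_s\bd b_s,\oplus_s\tau_s)$ and $\phi(\bd c,\br{\bd b_s})\cdot(\oplus_s\tau_s)$ are so induced together with equivariance of the action; for the right side one uses that $\kappa_n(\bd c)\br{\theta(\bd b_s,\tau_s)}$ is induced by $\gamma(\id_{\kappa_n\bd c};\br{\theta(\bd b_s,\tau_s)})$, whose target $\gamma(\kappa_n\bd c;\br{\kappa_{j_s}\bd b_s\cdot\tau_s})$ equals $\gamma(\kappa_n\bd c;\br{\kappa_{j_s}\bd b_s})\cdot(\oplus_s\tau_s)$ by the second equivariance formula for the operad $Y$. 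Since $Y(j)$ has exactly one morphism between any two objects, the two morphisms of $Y(j)$ coincide, hence so do the two sides of the identity, and the square commutes.

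The step I expect to be the real work is the bookkeeping around the block permutation $\oplus_s\tau_s$: one must keep careful track, via equivariance of the $Y$-action, of exactly which object of $\C^j$ each occurrence of $\phi$ and $\theta$ is being evaluated at after translating by $\oplus_s\tau_s$, and one must invoke the second operad equivariance formula correctly to recognize $\gamma(\kappa_n\bd c;\br{\kappa_{j_s}\bd b_s\cdot\tau_s})$ as $\gamma(\kappa_n\bd c;\br{\kappa_{j_s}\bd b_s})\cdot(\oplus_s\tau_s)$. Once these identifications are in place, commutativity is forced by uniqueness of morphisms in $Y(j)$. This completes the verification of the last of the defining diagrams of a multicategory, and hence the proof of Theorem~\ref{formsamulticat}.
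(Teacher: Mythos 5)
Your proposal is correct and follows essentially the same route as the paper: unpack both legs of the square, peel off the part handled by functoriality of $\kappa_n\bd c$, and reduce the remainder to a square of maps induced by morphisms of $Y(j)$, which commutes because $Y(j)$ has a unique morphism between any two objects. Your explicit bookkeeping with the block permutation, the equivariance of the $Y$-action, and the second operad equivariance formula simply spells out the step the paper compresses into ``all the arrows are induced from morphisms in $Y$, where all diagrams commute.''
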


\begin{proof}
The clockwise direction unpacks as follows:
\[
\xymatrix{
\C(\kappabar\bd c,d)\times\C^n(\br{\kappabar\bd b_s},\bd c)
\ar[d]^-{1\times\kappa_n\bd c}
\\\C(\kappabar\bd c,d)\times\C(\kappa_n\bd c\br{\kappabar\bd b_s},\kappabar\bd c)
\ar[d]^-{\circ}
\\\C(\kappa_n\bd c\br{\kappabar\bd b_s},d)
\ar[d]^-{\C(\phi(\bd c,\br{\bd b_s}),1)}
\\\C(\kappabar(\odot_s\bd b_s),d)
\ar[d]^-{(\oplus_s\tau_s)^*}
\\\C(\kappabar(\odot_s\tau_s^{-1}\bd b_s),d).
}
\]
However, it will be convenient to rearrange this to delay the composition to the end, as follows:
\[
\xymatrix{
\C(\kappabar\bd c,d)\times\C^n(\br{\kappabar\bd b_s},\bd c)
\ar[d]^-{1\times\kappa_n\bd c}
\\\C(\kappabar\bd c,d)\times\C(\kappa_n\bd c\br{\kappabar\bd b_s},\kappabar\bd c)
\ar[d]^-{1\times\C(\phi(\bd c,\br{\bd b_s}),1)}
\\\C(\kappabar\bd c,d)\times\C(\kappabar(\odot_s\bd b_s),\kappabar\bd c)
\ar[d]^-{1\times(\oplus_s\tau_s)^*}
\\\C(\kappabar\bd c,d)\times\C(\kappabar(\odot_s\tau_s^{-1}\bd b_s),\kappabar\bd c)
\ar[d]^-{\circ}
\\\C(\kappabar(\odot_s\tau_s^{-1}\bd b_s),d).
}
\]

The counterclockwise direction unpacks as follows:
\[
\xymatrix{
\C(\kappabar\bd c,d)\times\C^n(\br{\kappabar\bd b_s},\bd c)
\ar[d]^-{1\times\prod_s\tau_s^*}
\\\C(\kappabar\bd c,d)\times\C^n(\br{\kappabar(\tau_s^{-1}\bd b_s)},\bd c)
\ar[d]^-{1\times\kappa_n\bd c}
\\\C(\kappabar\bd c,d)\times\C(\kappa_n\bd c\br{\kappabar(\tau_s^{-1}\bd b_s)},\kappabar\bd c)
\ar[d]^-{\circ}
\\\C(\kappa_n\bd c\br{\kappabar(\tau_s^{-1}\bd b_s)},d)
\ar[d]^-{\C(\phi(\bd c,\br{\tau_s^{-1}\bd b_s}),1)}
\\\C(\kappabar(\odot_s\tau_s^{-1}\bd b_s),d).
}
\]
But again, it will be convenient to rewrite this to delay the composition to the end, as follows:
\[
\xymatrix{
\C(\kappabar\bd c,d)\times\C^n(\br{\kappabar\bd b_s},\bd c)
\ar[d]^-{1\times\prod_s\tau_s^*}
\\\C(\kappabar\bd c,d)\times\C^n(\br{\kappabar(\tau_s^{-1}\bd b_s)},\bd c)
\ar[d]^-{1\times\kappa_n\bd c}
\\\C(\kappabar\bd c,d)\times\C(\kappa_n\bd c\br{\kappabar(\tau_s^{-1}\bd b_s)},\kappabar\bd c)
\ar[d]^-{1\times\C(\phi(\bd c,\br{\tau_s^{-1}\bd b_s}),1)}
\\\C(\kappabar\bd c, d)\times\C(\kappabar(\odot_s\tau_s^{-1}\bd b_s),\kappabar\bd c)
\ar[d]^-{\circ}
\\\C(\kappabar(\odot_s\tau_s^{-1}\bd b_s),d).
}
\]
We can connect the two directions, which form the perimeter of the following diagram, in which
the hexagon is actually the product of two separate hexagons:
\[
\xymatrix@C-60pt{
&\C(\kappabar\bd c,d)\times\C^n(\br{\kappabar\bd b_s},\bd c)
\ar[dl]_-{1\times\prod_s\tau_s^*}
\ar[dr]^-{1\times\kappa_n\bd c}
\\\C(\kappabar\bd c,d)\times\C^n(\br{\kappabar(\tau_s^{-1}\bd b_s)},\bd c)
\ar[d]_-{1\times\kappa_n\bd c}
&&\C(\kappabar\bd c,d)\times\C(\kappa_n\bd c\br{\kappabar\bd b_s},\kappabar\bd c)
\ar[dll]_-{1\times\C(\kappa_n\bd c\br{\theta(\bd b_s,\tau_s)},1)\,\,\,\,\,\,\,\,\,\,\,\,\,\,}
\ar[d]^-{1\times\C(\phi(\bd c,\br{\bd b_s}),1)}
\\\C(\kappabar\bd c,d)\times\C(\kappa_n\bd c\br{\kappabar(\tau_s^{-1}\bd b_s)},\kappabar\bd c)
\ar[dr]_-{1\times\C(\phi(\bd c,\br{\tau_s^{-1}\bd s}),1)\,\,\,\,\,\,\,\,\,\,\,\,}
&&\C(\kappabar\bd c,d)\times\C(\kappabar(\odot_s\bd b_s),\kappabar\bd c)
\ar[dl]^-{1\times(\oplus_s\tau_s)^*}
\\&\C(\kappabar\bd c,d)\times\C(\kappabar(\odot_s\tau_s^{-1}\bd b_s),\kappabar\bd c)
\ar[d]^-{\circ}
\\&\C(\kappabar(\odot_s\tau_s^{-1}\bd b_s),d).
}
\]
Since nothing happens to the first factor, $\C(\kappabar\bd c, d)$, until the end composition, we may ignore it,
and concentrate on the second factor, $\C^n(\br{\kappabar\bd b_s},\bd c)$.  The top square in the diagram
becomes
\[
\xymatrix{
\C^n(\br{\kappabar\bd b_s},\bd c)
\ar[r]^-{\kappa_n\bd c}
\ar[d]_-{\prod_s\tau_s^*}
&\C(\kappa_n\bd c\br{\kappabar\bd b_s},\kappabar\bd c)
\ar[d]^-{\C(\kappa_n\bd c\br{\theta(\bd b_s,\tau_s)},1)}
\\\C^n(\br{\kappabar(\tau^{-1}\bd b_s)},\bd c)
\ar[r]_-{\kappa_n\bd c}
&\C(\kappa_n\bd c\br{\kappabar(\tau^{-1}\bd b_s)},\kappabar\bd c).
}
\]
But the maps $\tau_s^*$ are induced by $\theta(\bd b_s,\tau_s)$, so we can rewrite the
left vertical arrow as $\C^n(\br{\theta(\bd b_s,\tau_s)},1)$, and the diagram now commutes
by functoriality of $\kappa_n\bd c$.

The lower square in our desired diagram becomes
\[
\xymatrix@C+50pt{
\C(\kappa_n\bd c\br{\kappabar\bd b_s},\kappabar\bd c)
\ar[r]^-{\C(\phi(\bd c,\br{\bd b_s}),1)}
\ar[d]_-{\C(\kappa_n\bd c\br{\theta(\bd b_s,\tau_s)},1)}
&\C(\kappabar(\odot_s\bd b_s),\kappabar\bd c)
\ar[d]^-{(\oplus_s\tau_s)^*}
\\\C(\kappa_n\bd c\br{\kappabar(\tau^{-1}\bd b_s)},\kappabar\bd c)
\ar[r]_-{\C(\phi(\bd c,\br{\tau_s^{-1}\bd b_s)},1)}
&\C(\kappabar(\tau_s^{-1}\bd b_s),\kappabar\bd c).
}
\]
Since all the arrows in this diagram are induced from morphisms in $Y(n)$, where all diagrams commute,
this diagram also commutes.  This completes the verification that the second equivariance diagram commutes,
and therefore that $U_\kappa\C$ satisfies the requirements for a multicategory.
\end{proof}


\begin{thebibliography}{0}
\bibitem{SMC1}
A.\ D.\ Elmendorf, \emph{Operads for Symmetric Monoidal Categories}, Theory Appl.\ Categ.
{\bf 39} (2023), 535-544.
\bibitem{EM1}
A.\ D.\ Elmendorf and M.\ A.\ Mandell,
\emph{Rings, Modules, and Algebras in Infinite Loop Space Theory}, Adv.\ Math.\ {\bf 205} (2006) 163-228.
\bibitem{EM2}
A.\ D.\ Elmendorf and M.\ A.\ Mandell,
\emph{Permutative Categories, Multicategories, and Algebraic $K$-theory}, Algebr.\ Geom.\ Topol.\ {\bf9} (2009) 2391-2441.
\bibitem{Hermida}
Claudio Hermida, \emph{Representable Multicategories}, Advances in Math.\ {\bf 151} (2000), 164-225.
\bibitem{Isbell}
J.\ R.\ Isbell, \emph{On Coherent Algebras and Strict Algebras}, J.\ Algebra {\bf13} (1969) 28-46.
\bibitem{Leinster}
Tom Leinster, \emph{Higher Operads, Higher Categories}, London Mathematical Society Lecture Note
Series {\bf 298}, Cambridge U.\ Press, Cambridge, 2004.
\bibitem{Einfinity}
J.\ P.\ May, \emph{$E_\infty$ Spaces, Group Completions, and Permutative Categories}, in \emph{New Developments
in Topology}, ed. G.\ Segal, London Math.\ Soc.\ Lecture Note Series {\bf11}, Cambridge U.\ Press, Cambridge, 1974, 61-93.
\bibitem{G}
J.\ P.\ May, \emph{The Geometry of Iterated Loop Spaces}, Lecture Notes in Mathematics v.\ 271, 
Springer-Verlag Berlin, Heidelberg, New York 1972.
\end{thebibliography}
\end{document}